\newcommand{\revsag}{\textcolor{black}}
\numberwithin{equation}{section}
\newtheorem{defn}{Definition}[section]
\newtheorem{thm}{Theorem}[section]
\newtheorem{lem}{Lemma}[section]
\newtheorem{prop}{Proposition}[section]
\theoremstyle{remark}
\newtheorem{rem}{Remark}[section] 
\newcommand{\E}{\mathbb{E}}
\newcommand{\Pro}{\mathbb{P}}
\newcommand{\Var}{\mbox{Var}}
\newcommand{\sech}{\mbox{sech}}
\title{Detecting Planted Partition in Sparse Multi-Layer Networks}
\author[1]{Anirban Chatterjee}
\author[2]{Sagnik Nandy}
\author[3]{Ritwik Sadhu}
\affil[1]{\emph{University of Pennsylvania}}
\affil[2]{\emph{University of Pennsylvania}}
\affil[3]{\emph{Cornell University}}
\begin{document}
	\maketitle

\abstract{Multilayer networks are used to represent the interdependence between the relational data of individuals interacting with each other via different types of relationships. To study the information-theoretic phase transitions in detecting the presence of planted partition among the nodes of a multi-layer network with additional nodewise covariate information and diverging average degree, \citet{ma_nandy} introduced \emph{Multi-Layer Contextual Stochastic Block Model}. In this paper, we consider the problem of detecting planted partitions in the Multi-Layer Contextual Stochastic Block Model, when the average node degrees for each network is greater than $1$.
We establish the sharp phase transition threshold for detecting the planted bi-partition. Above the phase-transition threshold testing the presence of a bi-partition is possible, whereas below the threshold no procedure to identify the planted bi-partition can perform better than random guessing. We further establish that the derived detection threshold coincides with the threshold for weak recovery of the partition and provide a quasi-polynomial time algorithm to estimate it.} 

\maketitle
\section{Introduction}
Relational data generated in diverse fields of scientific study such as genomics, ecology, information science, etc., are naturally represented by networks. Such relational data of individuals interacting with each other through multiple types of relationships can be represented by multiple dependent networks sharing the same set of vertices. This collection of networks is termed a \emph{multiplex network}. Here the node set shared among the networks represents the subjects involved in the interaction and the edge sets of different layers represent different types of relationships between them. For example, in their study on the diffusion of participation in a microfinance program through social networks, \citet{banerjee2013diffusion} considered interactions among individuals in villages of southern India happening through social as well as financial exchanges, and \citet{lazega2001collegial} studied co-worker, advice and friendship networks between lawyers in a corporate law firm in the Northeastern United States. Multiplex networks have also been studied in social networks literature to represent multiple channels of relationship, for example, \citet{borondo2015multiple} studies three types (follow, mention, and retweet) of Twitter interaction among individuals.

The problem of detecting latent structures planted uniformly in an Erd\H{o}s-R\'enyi random graph is an important area of research in probability and mathematical statistics. In the single network Erd\H{o}s-R\'enyi model, a vast literature has been developed to study the problems of detecting hidden cliques \cite{Deshpande2015}, planted partitions \cite{mns14,massoulie2013community, AbbeMonYash,mossel2022exact}, or planted subgraphs \cite{pmlr-v99-massoulie19a,doi:10.1287/opre.2019.1886}.

The problem of detecting planted partitions of the nodes is of greater interest to statisticians because of its alternative interpretation as the problem of clustering the nodes of the network into interpretable communities, also known as the \emph{community detection problem}. In fact, we shall use the terms community detection and recovering planted bi-partition interchangeably throughout this paper. The community detection problem has found applications in diverse fields like the study of sociological interactions \cite{Fortunato_2010}, gene expressions \cite{1339264}, and recommendation systems \cite{1167344}, among others. A number of generative models have been suggested to understand the performance of different network clustering algorithms, the most popular being the Stochastic Block Model (SBM) \cite{holland1983stochastic}. The SBM's are quite accurate in modeling several real-world networks and exhibit rich phase transition phenomena (see, \cite{Decelle_2011,massoulie2013community,mns14,mnsf}). These stylized models are used to study the interplay between the statistical and computational barriers of community detection in networks with a planted partition (see, \cite{KC,YC}). We refer the reader to \citet{abbe2017community} for a detailed survey. 

It is worthwhile to note that while several works in the literature have considered the problem of detecting planted structures in a single network, very few works have considered the same problem in multiplex networks, leaving a significant gap in the literature. It is important to note that while it is possible to study a multiplex network system by collapsing the layers into a single layer (see \cite{el2020orthonet}), such collapsing leads to the loss of important information and it is imperative that the method of such collapse must be carefully chosen (see, \cite{taylor2016enhanced,taylor2017super}). While aggregating multiple networks facilitate the loss of information, studying each network separately fails to take into account the inter-dependencies among the multiple layers \cite{zhang2020flexible}. In contrast, studying such a collection of networks as a multiplex network system helps to prevent the loss of information as well as allows for the study of interdependence between the layers.  In this paper, we study the statistical thresholds for detecting the presence of a planted partition among the shared vertices in a multiplex network system. 

In that direction, we consider the stylized model proposed by \citet{ma_nandy} called the \emph{Contextual Multi-Layer Networks}.
In particular, we consider $m$ undirected networks on $n$ vertices with a planted bi-partition $\bm \sigma$ of the shared vertices. The entries of $\bm{\sigma} = (\sigma_{i})_{i=1}^{n}$ are $+1$ or $-1$ denoting the partition to which the $i$-th node belongs. Henceforth, we shall denote the set $\{1,2,\cdots,n\}$ by $[n]$.

For $1 \le i \le n$, $\sigma_i$'s are sampled uniformly from $\{-1,+1\}$, or in other words, the subjects are randomly assigned to the partitions. This uniform sampling enforces an approximate balance in the size of each partition. Let the $m$ un-directed networks be denoted by $\{\bm{G}_{i}:i\in [m]\}$. Conditional on $\bm{\sigma}, \{\bm{G}_{1},\cdots, \bm{G}_{m}\}$ are mutually independent and for each $\bm{G}_{k}$ the edges are generated independently with probability
\begin{align*}
    \mathbb{P}\left[(i,j)\in E(\bm{G}_{k})\middle|\bm{\sigma}\right] = 
    \begin{cases}
    \frac{a_{k}}{n} & \text{ if }\sigma_{i} = \sigma_{j}\\
    \frac{b_{k}}{n} & \text{ if }\sigma_{i}\neq \sigma_{j} 
    \end{cases}
    \text{ with } a_{k}>b_{k},\text{ for all }1\leq k\leq m.
\end{align*}
Here $E(\bm{G}_{k})$ denotes the edge set of the random graph $\bm G_k$. For $1\leq k\leq n$,  we define $\bm{A}_{k} := \left(A_{ij}^{(k)}\right)\in\{0,1\}^{n\times n}$ to be the adjacency matrix of the network $\bm{G}_{k}$ and $d_{k} := \frac{a_{k} + b_{k}}{2}$ to be the average degree of the network. Consider the following re-parametrization of the connection probabilities:
\begin{align*}
    a_{k} = d_{k} + \lambda_{k}\sqrt{d_{k}},\text{ and }b_{k} = d_{k} - \lambda_{k}\sqrt{d_{k}},
\end{align*}
where $\lambda_k\geq 0$, is the signal to noise ratio for the $k$-th network is defined as,
\begin{align*}
    \lambda_k^2:=\frac{(a_k-b_k)^2}{2(a_k+b_k)}.
\end{align*}

In this paper, we shall consider the regime where $d_k > 1$ for all graphs $\bm G_k$ where $1 \le k \le m$.
In addition to the networks $\bm G_1,\ldots,\bm G_m$, we also observe a $n\times p$ dimensional data matrix $\bm{B} = \left[B_{1},B_{2},\cdots, B_{n}\right] \in \mathbb{R}^{p\times n}$, where the $i^{th}$ column represents a $p-$dimensional covariate vector  corresponding to the $i^{th}$ individual. We assume that the $i$-th column of $\bm B$ is distributed as,
\begin{align}\label{eq:def-B}
    B_{i} = \sqrt{\dfrac{\mu}{n}}\sigma_{i}\bm{u} + \bm{Z}_{i}, \ i\in [n]
\end{align}
where $\bm{u}\sim \mathrm{N}(\bm{0}, \bm{I}_{p\times p})$ is a latent Gaussian vector, and $\{\bm Z_{i}\}_{i=1}^{n}$ are generated IID from $\mathrm{N}\left(\bm{0},\bm{I}_{p\times p}\right)$. In other words, given $\bm u$ and $\bm \sigma$, the columns of $\bm B$ are generated from a two-component Gaussian Mixture model where the component means are given by $\sqrt{\frac{\mu}{n}}\bm u$ and $-\sqrt{\frac{\mu}{n}}\bm u$ and the mixing proportion is equal to $1/2$. It can be easily seen that, by definition, $\bm{B}$ is independent of $\{\bm{G}_{k}:k\in [m]\}$ given the planted partition $\bm{\sigma}$. 
The parameter $\mu$ represents the signal-to-noise ratio of $\bm B$. For $m=1$, the above model recovers the \emph{contextual stochastic block model} of ~\cite{lu2020contextual}. We assume $\mu\geq 0$, and consider the the high dimensional proportional regime, where $\dfrac{n}{p}\rightarrow\gamma\in (0,\infty)$. 

A typical example of a data set that can reasonably be modeled by the Contextual Multilayer Network is the data on the social support network of rural Indian villages studied in \cite{contisciani2020community}. Here the authors consider $L=6$ networks, each depicting a particular type of mutual support offered between individuals, and based on ethnographic observations, caste, and religion were used as the additional covariate information. Another example of this scenario was described in \cite{mm} where the nodes represent proteins, whereas the edges in one network represent physical interactions between nodes and those in another network represent co-memberships in protein complexes. 
\subsection{Survey of related literature}
The methods used in this paper are motivated by the seminal papers \cite{massoulie2013community,mns14,MNS14a,mnsf} where the authors considered the same problem with a single network. \revsag{In \citet{lu2020contextual}, the authors extended the set-up considered in \citet{mnsf} to a more general setting where the network information is complemented by a set of side information in the form of Gaussian covariates. In the nondiverging average degree regime, they established the sharp phase transition threshold above which the construction of a nontrivial estimate of the planted bipartition is possible by a polynomial time algorithm. Below the mentioned phase transition threshold, it is impossible to reconstruct the planted partition with greater accuracy than what is achieved by an algorithm that randomly guesses the labels of the nodes. In fact, the authors established that below the mentioned phase transition threshold, it is impossible to consistently test for the presence of a planted partition in the underlying model. In particular, the authors showed that if the combined signal-to-noise ratio of the network and the covariates are below the phase transition threshold, the model is contiguous to a model with no planted partition. Above the phase-transition threshold, the authors constructed a consistent test to detect the presence of a planted partition in the underlying model. They also proposed an algorithm based on counting self-avoiding walks on an appropriately designed factor graph that can weakly recover the planted partition in quasi-polynomial time when the combined signal-to-noise ratio is above the detection threshold.} 

In \cite{ma_nandy}, the authors considered a new model named \emph{Contextual Multilayer Networks} where in addition to the node covariates, one observes multiple random networks with the same community structure but different connection probabilities. They derived the limit of mutual information between the latent communities and the observed data using a version of the \emph{Approximate Message Passing} algorithm. However, their work considers the diverging average degree regime and the method described there breaks down when one considers the regime with nondiverging average degrees for each network. The sparser regime, considered in this paper, with a near-constant average degree for each of the networks, is more common in real applications where small-world networks tend to be of the predominant type. Moreover, prior works in this direction do not provide an algorithm implementable in polynomial time to weakly recover the latent community structure from the Multi-layer Contextual Network data. This leaves a significant gap in the literature. 

\revsag{We adapt the mathematical techniques of \citet{lu2020contextual} to characterize the sharp thresholds for detecting and weakly recovering a planted bi-partition in \emph{Contextual Multilayer Networks} with $m$ networks on the same set of vertices and a $p \times n$ Gaussian covariate matrix. In the special case when $m=1$ our results recover the results of \citet{lu2020contextual}. Akin to the analysis of \citet{lu2020contextual} for $m=1$ (in particular, Theorem 1 and Proposition 5), in this paper, we establish a detectability threshold for the planted bi-partition in the sparse Contextual Multilayer Stochastic Block Models and prove an ANOVA-type decomposition of the variability in log-likelihood of the model. Our analysis applies to networks with a growing number of vertices but constant average degrees even when the number of networks $m \ge 1$. }

\revsag{However, it is worthwhile to mention that the optimal way to combine information across different layers of the multi-layer network to achieve consistent detection and weak recovery of the planted partition for the weakest possible signal-to-noise ratio is not obvious. We achieve this by constructing an appropriate factor graph that aggregates all the information about the planted partition in an efficient way. Therefore, a test based on graph statistics computed using this factor graph consistently detects the presence of a planted bi-partition up to the information-theoretic threshold. While the mathematical techniques used in this paper are adapted from previous works of \citet{massoulie2013community,mns14,MNS14a,lu2020contextual}, it is the construction of this factor graph that underscores the main technical novelty of this work. Deriving information theoretic phase transition for weak recovery in the multilayer set-up using graph statistics computed from such aggregated factor graphs has not been explored in the literature before. It is also worthwhile to mention that no previous works had considered developing a Belief Propagation algorithm in a multi-layer network setup. Belief Propagation algorithms have been considered for the single layer case, i.e., $m=1$ (see, \cite{lu2020contextual,duranthon2024optimal}). The main novelty of the BP Algorithm provided in this paper is the use of the aggregated factor graph. This aggregation technique allows the BP Algorithm to achieve high accuracy in weak recovering the planted bi-partition for extremely low signal-to-ratio up to the information-theoretic threshold for weak recovery.}

\revsag{Furthermore, \citet{agterberg2022joint} considered the multi-layer network set-up in higher signal-to-noise ratio regimes and proposed a spectral algorithm to recover the latent communities. Similar results have also been explored in \citet{paul2018random} where the authors derived the minimax lower bound of the mis-clustering rate but failed to provide a polynomial time implementable algorithm to achieve the lower bound. However, the methods described in this paper are not appropriate for deriving the minimax lower bound of the mis-clustering rate. For that purpose, it is essential to consider a more delicate construction of the least favorable prior. Our results, on the other hand, delineate the information-theoretic lower bound on testing. As observed in different high-dimensional problems, consistent testing for the presence of a planted partition within this framework is possible in a wider signal-to-noise ratio regime than recovering such a partition. } 

\revsag{Various applications of multiplex network data have also been studied in \cite{NIPS,inproceedings,witten,GW_2,9173970,chen2021global,racz2021correlated, ghasemian2016detectability, xu2022covariate, kumar2019effect}. Very recently, \citet{yang2024fundamental} have studied a more generic model where the community assignments across different layers of the networks can be correlated but not identical. However, they can prove the information-theoretic threshold for the detection of the underlying partitions only in the diverging average degree regime.} 

\subsection{Summary of our results}
Our major contributions to this paper are as follows:
\begin{enumerate}
    \item In Section \ref{sec:test}, we consider the problem of testing for the presence of a planted bi-partition among the shared nodes of the multiplex network. We show that the model with the planted bi-partition is contiguous to the model without the planted structure if $\lambda^2_1+\cdots+\lambda^2_m+\mu^2/\gamma<1$. On the other hand, we construct a sequence of consistent tests for the stated hypotheses when $\lambda^2_1+\cdots+\lambda^2_m+\mu^2/\gamma>1$. It is worthwhile to note that for a single network $G_{k}$ without covariate information, the detection threshold is $\lambda^2_k>1$ while only observing the covariate information changes the detection threshold to $\frac{\mu^2}{\gamma}>1$. Thus, combining information across multiple layer helps to boost the detection threshold of each network additively. In the contiguity regime, we further derive the limit of the likelihood ratio in terms of appropriate cycle statistics providing a decomposition of the randomness present in the log-likelihood ratio.
    \item In Section \ref{subsec: weak_recovery}, we consider the problem of \emph{weak recovery} of the planted bi-partition, i.e., constructing an estimator of the partition with a positive correlation with the ground truth. We show that the signal-to-noise ratio threshold below which \emph{weak recovery} is impossible coincides with the threshold below which consistently testing for the presence of a partition is impossible. Above the threshold, \emph{weak recovery} of the latent partition is possible and we provide a quasi-polynomial time algorithm to achieve the same. 
    \item In Section \ref{Belief_Propagation}, keeping in mind the impracticality of the above proposed quasi-polynomial time algorithm, we propose a Belief Propagation-based algorithm to approximately compute the MAP estimator of the partition $\bm \sigma$ and in Section 5, we provide numerical simulations to demonstrate the performance of our Belief Propagation Algorithm. Codes for all the experiments can be found in the Github repository \href{https://github.com/anirbanc96/Sparse-MCSBM}{https://github.com/anirbanc96/Sparse-MCSBM}.
\end{enumerate}

\section{Testing the presence of Planted Partition}\label{sec:test}
Let us consider the following hypothesis testing problem.
\begin{equation}
\label{eq:hyp_1}
\bm{H}_0: (\lambda_1,\cdots,\lambda_m,\mu)=(0,\cdots,0,0) \quad \quad \mbox{vs} \quad \quad \bm{H}_{1}: (\lambda_1,\cdots,\lambda_m,\mu)\neq (0,\cdots,0,0).
\end{equation}
Consider $\mathbb{P}_{\bm{0},0}$ to be the joint distribution of the $m$ networks and the covariate matrix when $\lambda_k=0$ for all $k$ and $\mu=0$. Similarly, $\mathbb{P}_{\bm{\lambda},\mu}$ be the same joint distribution with non-trivial signal-to-noise parameters. 

Observe that the null hypothesis refers to the setup when there is no planted partition. It is imperative that for very small values of $\lambda_i$'s and $\mu$, the null will be indistinguishable from the alternative. In such a situation, when the values of $\lambda_i$'s and $\mu$ are all very small, we cannot hope to get any estimator of $\bm \sigma$ that is positively correlated with the ground truth. However, in a multiplex network system, we can afford to have some of the $\lambda_k$'s to be small, as long as the combined effect of all the $\lambda_k$'s and $\mu$ is substantial. We formalize this intuition in the following theorem, whose proof is given in Appendix \ref{appendix:proofofthm1}.
\begin{thm}
\label{thm:detection_threshold}
 If $\lambda^2_1+\cdots+\lambda^2_m+\mu^2/\gamma<1$, then $\mathbb{P}_{\bm{\lambda},\mu}$ is contiguous to $\mathbb{P}_{\bm{0},0}$. On the other hand, if $\lambda^2_1+\cdots+\lambda^2_m+\mu^2/\gamma>1$, the distributions are asymptotically mutually singular.
\end{thm}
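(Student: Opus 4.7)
The plan is to prove both halves of Theorem~\ref{thm:detection_threshold} by analyzing the likelihood ratio $L_n := d\mathbb{P}_{\bm{\lambda},\mu}/d\mathbb{P}_{\bm{0},0}$ and invoking Le Cam's first and second lemmas. Given the latent label vector $\bm\sigma$, the networks $\bm G_1,\ldots,\bm G_m$ and the covariate matrix $\bm B$ are conditionally independent, so
\[
L_n = \frac{1}{2^n}\sum_{\bm\sigma\in\{\pm1\}^n} L_n^{\mathrm{cov}}(\bm\sigma)\prod_{k=1}^m L_n^{(k)}(\bm\sigma),
\]
where $L_n^{(k)}$ and $L_n^{\mathrm{cov}}$ denote the per-label likelihood ratios for the $k$-th network and for the covariates. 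A second moment computation under the null then reduces to an expectation over two i.i.d.\ copies $\bm\sigma,\bm\sigma'$, and the entire calculation is controlled by the overlap $R:=n^{-1}\langle\bm\sigma,\bm\sigma'\rangle$.

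For the contiguous regime $\lambda_1^2+\cdots+\lambda_m^2+\mu^2/\gamma<1$, the naive second moment of $L_n$ is unbounded in the sparse regime because of rare short cycles in each $\bm G_k$. Following the small subgraph conditioning method of Janson and its use in \cite{mns14,lu2020contextual}, I would let $X_\ell^{(k)}$ count the $\ell$-cycles in $\bm G_k$ and define a cycle-corrected likelihood ratio $\widetilde L_n$ by dividing $L_n$ by the likelihood contribution of $\{X_\ell^{(k)}:k\in[m],\,\ell\geq 3\}$. Since the Gaussian covariates do not affect cycles, these counts have the same asymptotic Poisson laws as in the pure multilayer SBM, with null and alternative means $d_k^\ell/(2\ell)$ and $d_k^\ell/(2\ell)+(\lambda_k\sqrt{d_k})^\ell/(2\ell)$, respectively. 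I would then show that (i) $\mathbb{E}_{\bm 0,0}[\widetilde L_n^2]$ is bounded and (ii) $\widetilde L_n/L_n\to 1$ in $\mathbb{P}_{\bm 0,0}$-probability, from which Le Cam's first lemma yields contiguity. The crux of (i) is a Gaussian overlap integral: after the cycle correction, the second moment is controlled by
\[
\mathbb{E}_{\bm\sigma,\bm\sigma'}\exp\!\Bigl(\tfrac{n}{2}\bigl(\lambda_1^2+\cdots+\lambda_m^2+\mu^2/\gamma\bigr)R^2+o(1)\Bigr),
\]
and since $\sqrt{n}\,R$ converges in distribution to $\mathrm{N}(0,1)$ under the uniform prior, this integral is finite exactly when the bracketed coefficient is strictly less than one.

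For the singular regime $\lambda_1^2+\cdots+\lambda_m^2+\mu^2/\gamma>1$, I would exhibit a test with vanishing type I and type II errors. A natural candidate combines (a) a weighted linear functional of the centered cycle counts $\sum_{k,\ell} w_{k,\ell}\bigl(X_\ell^{(k)}-d_k^\ell/(2\ell)\bigr)$ across the $m$ networks, and (b) a quadratic/spectral functional of $\bm B\bm B^\top$ detecting the rank-one signal in the covariates. Under $\mathbb{P}_{\bm 0,0}$ both pieces fluctuate as independent Poisson/Gaussian centered statistics, while under $\mathbb{P}_{\bm\lambda,\mu}$ their means shift by $(\lambda_k\sqrt{d_k})^\ell/(2\ell)$ and by a deterministic quantity scaling with $\mu/\sqrt{\gamma}$. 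Choosing the weights to align with the score direction and matching variances, the squared mean separation summed across layers and combined with the covariate signal becomes proportional to $\lambda_1^2+\cdots+\lambda_m^2+\mu^2/\gamma-1$, yielding asymptotic mutual singularity.

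The main obstacle is the contiguity direction. Technically, it requires (i) joint Poisson convergence and a uniform-in-$\ell$ truncation argument for the cycle statistics across all $m$ layers under both $\mathbb{P}_{\bm 0,0}$ and $\mathbb{P}_{\bm\lambda,\mu}$, and (ii) algebraic identities that collapse the many cross-terms arising from $\prod_k L_n^{(k)}\cdot L_n^{\mathrm{cov}}$ cleanly into the single quadratic-in-$R$ exponent above. The covariate contribution mirrors the calculation of \cite{lu2020contextual} for $m=1$, but tracking the joint concentration of $R$ under the cycle-conditioned measure while simultaneously integrating out the latent spike $\bm u$ and the independent adjacency matrices is the delicate combinatorial-Gaussian bookkeeping that drives the argument.
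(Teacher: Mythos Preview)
Your proposal has genuine gaps in both directions, stemming from misidentifying where the difficulties lie.

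\textbf{Contiguity.} The claim that the naive second moment is unbounded ``because of rare short cycles in each $\bm G_k$'' is incorrect: for a sparse two-community SBM below the Kesten--Stigum threshold the plain second moment is already bounded. This is exactly Lemma~5.4 of \cite{mnsf}, which the paper invokes directly to obtain $\mathbb{E}_{\bm 0,0}[\prod_{i<j}W_{ij}^{(k)}V_{ij}^{(k)}\mid\bm\sigma,\bm\tau]=(1+o(1))\exp(-\lambda_k^2/2-\lambda_k^4/4+\rho^2\lambda_k^2(d_k+n)/2)$ with $\rho=n^{-1}\langle\bm\sigma,\bm\tau\rangle$. Small-subgraph conditioning is therefore not used for contiguity here (the paper deploys it only in Theorem~\ref{thm:contiguity} to identify the limit law of the likelihood ratio). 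The real obstruction is on the covariate side: integrating out the latent spike without care gives $\mathbb{E}_{\bm u,\bm v}[\exp(\mu\rho\langle\bm u,\bm v\rangle)]=(1-\mu^2\rho^2)^{-p/2}$, which is infinite at $\rho=\pm1$ whenever $\mu\ge 1$, a perfectly admissible parameter provided $\gamma>1$. The paper's remedy is to truncate the likelihood ratio to $\mathcal S=\{\|\bm u\|\le(1+\delta)\sqrt p\}$ and bound the truncated second moment, which is finite once $(1+\delta)^2\mu^2/\gamma+\sum_k\lambda_k^2<1$. Your clean overlap exponent $\tfrac{n}{2}(\sum_k\lambda_k^2+\mu^2/\gamma)R^2$ is only valid after such a truncation, which your plan omits; the cycle conditioning you propose does nothing to repair it.

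\textbf{Singularity.} Your test combines per-layer cycle counts $X_\ell^{(k)}$ with a separate covariate statistic, but this cannot succeed in the regime where every individual signal is subcritical ($\lambda_k^2<1$ for all $k$ and $\mu^2/\gamma<1$) yet $\sum_k\lambda_k^2+\mu^2/\gamma>1$. Summing the per-layer squared standardized shifts over all cycle lengths gives $\sum_k\sum_{\ell\ge 3}\lambda_k^{2\ell}/(2\ell)=-\tfrac12\sum_k\log(1-\lambda_k^2)<\infty$, so no choice of weights $w_{k,\ell}$ yields a consistent test, and the covariate piece is likewise subcritical on its own. The paper instead builds cycle statistics $Y_{k_1,\ldots,k_m,\ell}$ on the factor graph that mix $k_j$ wedges from network $j$ and $\ell$ covariate wedges within a \emph{single} cycle of total length $k=\sum_jk_j+\ell$. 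Taking $k_j/k\approx\lambda_j^2/(\sum_i\lambda_i^2+\mu^2/\gamma)$, $\ell/k\approx(\mu^2/\gamma)/(\sum_i\lambda_i^2+\mu^2/\gamma)$ and letting $k\to\infty$ with $k=O(\log^{1/4}n)$, Stirling's formula shows the standardized mean shift behaves like $k^{-(m+1)/4}(\sum_i\lambda_i^2+\mu^2/\gamma)^{k/2}\to\infty$, which delivers a consistent test precisely above the threshold.
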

Let us recall that the signal-to-noise ratio is $\lambda^2_k$ if we only observe the $k$-th network in isolation and is $\mu^2/\gamma$ if we only observe the covariate matrix. The effective signal-to-noise ratio of the entire multilayer network system comprising all the $m$ networks and the data matrix is $\lambda^2_1+\cdots+\lambda^2_m+\mu^2/\gamma$. 
Theorem \ref{thm:detection_threshold} implies that if the effective signal-to-noise ratio is less than $1$, then we cannot have a consistent test between the null and the alternative. We observe that the signal-to-noise ratio from each of the individual data sources is boosted additively by the combination of information across different data sources. This allows us to test for the presence of planted partition using the combined information even when it is impossible to test with non-trivial power for the presence of the planted partition utilizing any component of the data in isolation.

 When the effective signal-to-noise ratio is larger than $1$ we construct a sequence of consistent tests for the hypotheses \eqref{eq:hyp_1} which exhibits the mutual singularity of $\mathbb{P}_{\bm{\lambda},\mu}$ and $\mathbb{P}_{\bm{0},0}$. In the following paragraph, we consider a class of short cycle statistics on an appropriately defined \emph{Factor Graph}, $G_F=(V_F, E_F)$. If $\lambda^2_1+\cdots+\lambda^2_m+\mu^2/\gamma>1$, this statistic has different asymptotic distributions under $H_0$ and $H_1$. The mean of our test statistic is significantly elevated under the alternative which provides a way to construct the above-mentioned sequence of consistent tests for testing $\bm H_0$ vs $\bm H_1$.

\paragraph{Cycle Statistics and the Detecting Test.}
\label{test_section}
Let us consider a factor graph $G_F=(V_F, E_F)$. 
Consider $V_{0}$ to be a collection of $n$ nodes, \revsag{referred to as the \emph{variable nodes}}, representing the $n$ subjects under study, and $m+1$ layers of factor nodes. For each network $\bm G_k$, we have a layer of factor nodes representing each of $A^{(k)}_{ij}$, for all $1 \le i<j\le n$, collectively denoted by $V_{k1}$. Additionally, we have $p$ factor nodes for the $p$ covariates, denoted by $V_2$. Now, collecting all the above nodes construct, $V_F=V_0\cup V_{11}\cup \cdots \cup V_{m1}\cup V_2$. For all $1 \le i < j \le n$ and $k \in [m]$, the factor node corresponding to $A^{(k)}_{ij}$ is connected to the variable nodes corresponding to $i$ and $j$ where the edge weights are given by $A^{(k)}_{ij}$. Let the edges between factor nodes of layer $k$ and the variable nodes be denoted by $E_{1_{k}}$. The subgraph made of the $p$ factor nodes corresponding to the $p$ covariates and the $n$ variable nodes is a complete bipartite graph. Let these edges be denoted by $E_2$ and each edge weight for edge $e=(i,j)$, where $i\in V_0$ and $j \in V_2$, be $B_{ij}$. Thus, the edge set $E_F$ is given by $E_F=E_{1_{1}}\cup E_{1_{2}}\cup \cdots \cup E_{1_{m}} \cup E_{2}$. An illustration of this factor graph is given in Figure \ref{fig:factor-graph}.  

Constructing the factor graph $G_F$ in the above-described manner allows for the optimal combination of information across the data sources. In effect, the factor graph $G_F$ captures all the available information about $\bm \sigma$ from $\bm G_1,\ldots,\bm G_m$ and $\bm B$.

\begin{figure}
    \centering
    \includegraphics[width = 0.8\linewidth]{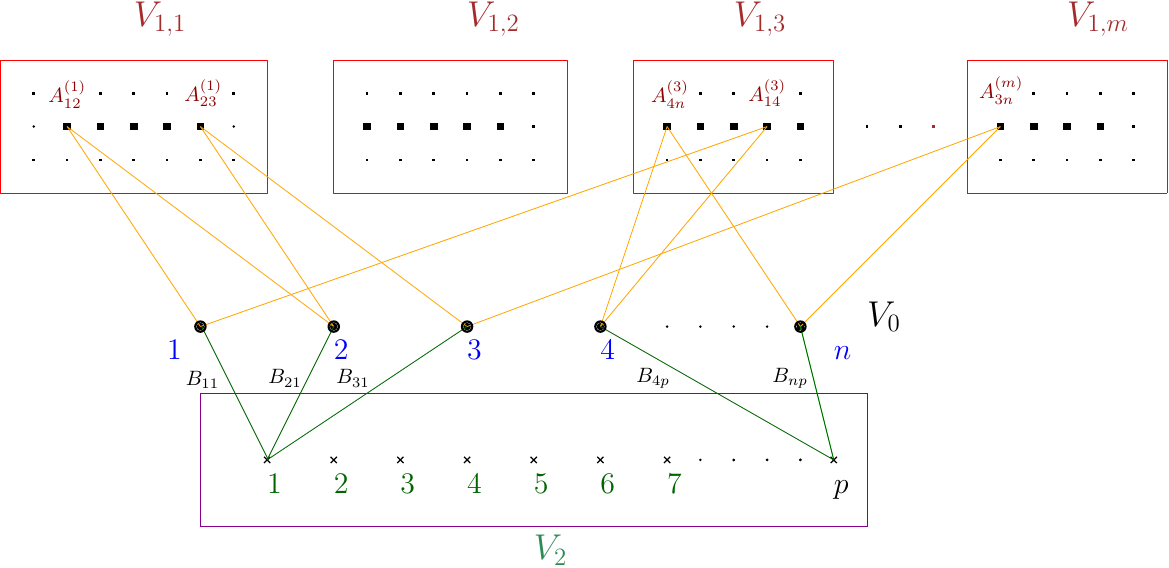}
    \caption{Example of Factor Graph where $V_{0}$ is the vertex set comprising of $n$ variable nodes, $V_{1,k},1\leq k\leq m$ are the factor nodes corresponding to networks $\bm{G}_{k},1\leq k\leq m$ and $V_{2}$ are the factor nodes corresponding to the $p$ covariates, where corresponding edge weights are indicated to the left of such edge.}
    \label{fig:factor-graph}
\end{figure}
Let us consider cycles on this factor graph. We observe that all the edges from $E_{1_{k}}$ or $E_{2}$ occur in pairs. Such pairs are referred to as wedges. If it is made of two edges of type $E_{1_{k}}$, for $1 \le k \le m$, then we call it a $\bm{A}_{k}$ type wedge and if it is made of two edges of type $E_2$, then we call it a $\bm{B}$ type wedge. We denote a cycle with $k_r$ type $\bm{A}_{r}$ wedges for $1 \le r \le m$ and $\ell$ type $\bm{B}$ wedges by $\omega_{k_{1},\cdots,k_m,\ell}$ and the subgraph induced by this cycle be denoted by $(V_{\omega}, E_{\omega})$. Let the edges of $\omega_{k_1,\cdots,k_m,\ell}$ which are of type $E_{1_{k}}$ be denoted by $E_{\omega,1_{k}}$ and those which are of type $E_2$ be denoted by $E_{\omega,2}$. Now, consider the following statistic.
\[
Y_{k_1,k_2,\cdots,k_m,\ell}:=\frac{1}{n^\ell}\sum\limits_{\omega_{k_1,\cdots,k_m,\ell}}\prod\limits_{e_1 \in E_{\omega,1_{1}}}\cdots\prod\limits_{e_m \in E_{\omega,1_{m}}}\prod\limits_{e_\ell \in E_{\omega,2}}A^{(1)}_{e_1}\cdots A^{(m)}_{e_m}B_{e_\ell}.
\]
This statistic counts the number of cycles in the observed networks with $k_r$ type $\bm{A}_{r}$ wedge for $1 \le r \le m$ and $\ell$ type $\bm{B}$ wedges; weighted by the product of the edge weights of type $E_2$ edges and scaled by $n^\ell$. The following theorem characterizes the distribution of $Y_{k_1,k_2,\cdots,k_m,\ell}$ under the null and the alternative generative models from \eqref{eq:hyp_1}.

\begin{thm}\label{thm:cycle-stat-asymp-dist}
Let $k=k_1+\cdots+k_m+\ell$. Then as $n \rightarrow \infty$,
\begin{enumerate}
    \item Under $H_0$,
    \[
    Y_{k_1,k_2,\cdots,k_m,0}\overset{d}{\rightarrow}\mathrm{Poi}\left(\frac{1}{2k}\;\frac{k!}{k_1!\,k_2!\cdots k_m!}\left\{d^{k_1}_1\,d^{k_2}_{2}\,\cdots d^{k_m}_m\right\}\right),
    \]
    and,
    \[
    \frac{Y_{k_1,k_2,\cdots,k_m,\ell}}{\sqrt{\dfrac{1}{2k\gamma^{\ell}}\;\dfrac{k!}{\ell!k_1!\,k_2!\cdots k_m!}\prod\limits_{j=1}^{m}d_{j}^{k_j}}}\overset{d}{\rightarrow} \mathrm{N}(0,1).
    \]
    \item Under $H_1$,
    \[
    Y_{k_1,k_2,\cdots,k_m,0}\overset{d}{\rightarrow}\mathrm{Poi}\left(\frac{1}{2k}\;\frac{k!}{\prod_{j=1}^{m}k_{j}!}\left\{\prod_{j=1}^{m}d_{j}^{k_{j}} +\prod_{j=1}^{m}(\lambda_j\sqrt{d_j})^{k_j}\right\}\right),
    \]
    and,
    \[
    \frac{Y_{k_1,k_2,\cdots,k_m,\ell}-\dfrac{1}{2k}\;\dfrac{k!}{\ell!\prod_{j=1}^{m}k_{j}!}\prod\limits_{j=1}^{m}(\lambda_j\sqrt{d_j})^{k_j}\left(\dfrac{\mu}{\gamma}\right)^{\ell}}{\sqrt{\dfrac{1}{2k\gamma^{\ell}}\;\dfrac{k!}{\ell!k_1!\,k_2!\cdots k_m!}\prod\limits_{j=1}^{m}d_{j}^{k_j}}}\overset{d}{\rightarrow} \mathrm{N}(0,1).
    \]
\end{enumerate}
Further $\{Y_{k_1,\cdots,k_m,\ell}\}$ are asymptotically independent and the asymptotic distribution continues to hold for $k=O(\log^{1/4} n)$.
\end{thm}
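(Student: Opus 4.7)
The plan is to prove all three assertions --- the marginal Poisson/Gaussian limits, joint asymptotic independence, and validity for $k=O(\log^{1/4}n)$ --- by the method of moments. A preparatory simplification exploits that $A^{(k)}_{ij}\in\{0,1\}$: each $A$-wedge through factor node $A^{(k)}_{ij}$ collapses to a single Bernoulli factor $A^{(k)}_{ij}$, while each $B$-wedge through covariate $j$ joining distinct variable nodes $i,i'$ contributes $B_{ij}B_{i'j}$. Thus $Y_{k_1,\ldots,k_m,\ell}$ is $n^{-\ell}$ times a weighted count of cycles on $V_0\cup V_2$ of wedge-length $k=k_1+\cdots+k_m+\ell$, using $k$ distinct variable nodes and $\ell$ distinct covariates, with the prescribed wedge-type profile. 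A direct enumeration gives $\sim \dfrac{n^{k}p^{\ell}}{2k}\dfrac{k!}{\ell!\,k_1!\cdots k_m!}$ such cycles.

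For the first moment, I would condition on $\bm\sigma$ and $\bm u$, use $\E[A^{(r)}_{ij}\mid\bm\sigma]=(d_r+\lambda_r\sqrt{d_r}\,\sigma_i\sigma_j)/n$, and after integrating the Gaussian residuals and latent $\bm u$, $\E[B_{vc}B_{v'c}\mid\bm\sigma]=\mu\,\sigma_v\sigma_{v'}/n$ for $v\neq v'$. Expanding the $A$-product over subsets $S$ of the $A$-wedges and averaging over $\bm\sigma$, the Ising identity $\sigma_v^2=1$ leaves only those $S$ for which every variable node of the cycle has even total degree (counting each $B$-wedge as contributing fixed parity $1$). When $\ell=0$, only $S=\varnothing$ and $S=$ all $A$-wedges survive, producing the two-term Poisson intensity under $H_1$; when $\ell\geq 1$, only $S=$ all $A$-wedges survives (since the $B$-edges force odd-parity boundary conditions at the endpoints of every A-arc), yielding the claimed shift. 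Under $H_0$ ($\lambda_k=\mu=0$) only the pure-$d$ term remains, recovering the Poisson intensity for $\ell=0$ and a centered statistic for $\ell\geq 1$.

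The marginal limits then come from further moment computations. For $\ell=0$, I would compute falling factorial moments $\E[(Y_{k_1,\ldots,k_m,0})_j]$: ordered $j$-tuples of distinct cycles split into edge-disjoint tuples, which factorize exactly and contribute $(\E Y)^j$ to leading order, and tuples with shared edges or vertices, which lose at least one free variable-node coordinate and hence contribute $O(1/n)$ relative to the dominant term; the Poisson limit follows by standard criteria. For $\ell\geq 1$, I would compute integer moments $\E[(Y-\E Y)^r]$: since the $B$-factors are Gaussian, Wick's theorem identifies the leading contribution as the pairing of the $r$ cycle-copies into identical pairs, yielding $(r-1)!!\,\mathrm{Var}(Y)^{r/2}$ for even $r$ (and vanishing for odd $r$); the variance itself is read off from the $\omega=\omega'$ term and matches the stated normalizer, while partially overlapping configurations lose powers of $n^{-1}$ or $p^{-1}$.

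Asymptotic independence across distinct profiles follows by extending the same moment machinery to mixed moments $\E[\prod_\alpha Y_{\underline{k}^\alpha,\ell^\alpha}^{r_\alpha}]$: any edge overlap between cycles of different profiles forces a common substructure incompatible with both wedge-type patterns simultaneously, so only configurations with profiles disjointly grouped survive at leading order, giving factorization into the marginal moments. To extend to $k=O(\log^{1/4}n)$ I would carry explicit constants through every enumeration: the overcounting in replacing $(n)_k$ by $n^k$ is $O(k^2/n)$; per-overlap corrections at moment order $j$ are $O((jk)^c/n)$ for a fixed $c$; and the $2^k$-term parity expansion collapses to the Ising-surviving terms, so it does not proliferate. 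Requiring all corrections to be $o(1)$ while permitting $j$ to grow slowly with $n$ pins the regime at $k=O(\log^{1/4}n)$. The main obstacle is precisely this uniform bookkeeping --- tracking error terms through the factorial/Wick expansions, the $\bm\sigma$-parity cancellation, and the joint moment factorization simultaneously, and verifying that non-generic (edge- or vertex-overlapping) cycle configurations remain negligible at this scaling.
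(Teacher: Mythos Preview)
Your strategy matches the paper's for the $\ell=0$ Poisson limits (factorial moments, edge-disjoint tuples dominate, overlapping tuples handled by stochastic domination) and for the $\ell\geq 1$ Gaussian limit under $H_0$ (Wick pairing over cycle copies, with partially overlapping pairs losing a power of $n$). The first-moment parity/Ising computation you describe is also exactly how the paper obtains the Poisson intensities and the $H_1$ shift.

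Where you diverge from the paper is the $H_1$, $\ell\geq 1$ case. You propose to compute $\E[(Y-\E Y)^r]$ directly and invoke Wick on the $B$-factors, but under $H_1$ the $B_{ij}$'s are jointly Gaussian only \emph{conditionally} on $\bm\sigma$, so Wick applies to $\E[\,\cdot\,|\bm\sigma]$ and one must then control the average over $\bm\sigma$ of the resulting pairing sums; your outline does not address this complication. The paper sidesteps it by writing $B_{ij}=X_{ij}+Z_{ij}$ with $X_{ij}=\sqrt{\mu/n}\,\sigma_i u_j$ and decomposing $Y=T_1+T_2+T_3$, where $T_1$ (all $Z$'s) is analyzed exactly as under $H_0$, $T_2$ (all $X$'s) is shown to converge in probability to the claimed shift via LLN for $p^{-\ell}\sum u_{j_1}^2\cdots u_{j_\ell}^2$ and a variance bound, and $T_3$ (mixed terms) is shown to be $o_P(1)$. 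This decomposition is the main technical device you are missing; your direct route could in principle be pushed through but would require precisely this separation of signal and noise contributions to make the Wick accounting tractable. A minor point: your justification for asymptotic independence (``edge overlap forces a substructure incompatible with both profiles'') is not quite right --- cycles of different profiles can share wedges of a common type; the correct reason, as in the paper, is that any edge overlap costs more free variable-node coordinates than it saves in the expectation, so overlapping configurations are lower order.
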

\revsag{From the above result, we conclude that as $n \rightarrow \infty$, compared to $H_0$, in the alternative $H_1$, we gain additional signal characterized by 
\[
\dfrac{1}{2k}\;\dfrac{k!}{\ell!\prod_{j=1}^{m}k_{j}!}\prod\limits_{j=1}^{m}(\lambda_j\sqrt{d_j})^{k_j}\left(\dfrac{\mu}{\gamma}\right)^{\ell}.
\]
It shall be shown in Section \ref{appendix:proofofthm1} that this signal is significant compared to the inherent noise in the statistics $Y_{k_1,k_2,\cdots,k_m,\ell}$, if and only if, $\lambda^2+\mu^2/\gamma >1$.
Hence, these statistics can be used to construct consistent tests for \eqref{eq:hyp_1}, if and only if, $\lambda^2+\mu^2/\gamma >1$.
}

To prove Theorem \ref{thm:cycle-stat-asymp-dist}, in the case when $\ell=0$ we use the Method of Moments and the techniques described in \cite{mnsf}. For $\ell \neq 0$ we rely on the method of moments and Wick's Formula. We defer the elaborate proofs to Appendix \ref{appendix:proofofthm2}.

\paragraph{Contiguity Regime and the Asymptotic Expansion of the Likelihood Ratio.}\label{def} In the contiguity regime, i.e., when $\sum_{i=1}^{m}\lambda^2_i+\mu^2/\gamma<1$, one can find an asymptotic expansion of the log Likelihood ratio as a weighted sum of the graph statistics $\{Y_{k_1,k_2,\cdots,k_m,\ell}\}$ for different $k_1,\cdots,k_\ell$ and $m$. Since these statistics are asymptotically independent by Theorem \ref{thm:cycle-stat-asymp-dist}; this provides an Analysis of Variance style decomposition of the randomness in the log-likelihood ratio. To obtain the expansion, we use the small subgraphs conditioning techniques described by \cite{janson_1995} and later extended by \cite{banerjee2018contiguity,Banerjee2018AsymptoticNA,lu2020contextual}. 

Let us define $\delta_{k_1,\cdots,k_m}$, $\lambda_{k_1,\cdots,k_m}$, $\mu_{k_1,\cdots,k_m,\ell}$ and $\sigma^2_{k_1,\cdots,k_m,\ell}$ for $k_1,\cdots,k_m,\ell \in \mathbb{N}\cup\{0\}$ as follows, 
\begin{equation}
\label{eq:first_set_parameters}
\delta_{k_1,\cdots,k_m}:=\prod\limits_{j=1}^{m}\left(\frac{a_i-b_i}{a_i+b_i}\right)^{k_j},\quad 
\mu_{k_1,\cdots,k_m,\ell}:=\frac{1}{2k}\;\frac{k!}{\ell!\prod_{j=1}^{m}k_{j}!}\prod\limits_{j=1}^{m}(\lambda_j\sqrt{d_j})^{k_j}\left(\frac{\mu}{\gamma}\right)^{\ell},
\end{equation}
and
\begin{equation}
\label{eq:second_set_parameters}
\lambda_{k_1,\cdots,k_m}:=\frac{1}{2k}\;\frac{k!}{\prod_{j=1}^{m}k_{j}!}\prod\limits_{j=1}^{m}d_j^{k_j}, \quad \sigma^2_{k_1,\cdots,k_m,\ell}:=\frac{1}{2k\gamma^\ell}\;\frac{k!}{\ell!\prod_{j=1}^{m}k_{j}!}\prod\limits_{j=1}^{m}d_j^{k_j}.
\end{equation}
Let $\nu_{(k_1,\cdots,k_m)}$ be a sequence of independent Poisson$(\lambda_{k_1,\cdots,k_m})$ random variables, and $Z_{(k_1,\cdots,k_m,\ell)}$ be another sequence of independent $N(0,\sigma^2_{k_1,\cdots,k_m,\ell})$ random variables. Then the following theorem describes the asymptotic expansion of the log-likelihood ratio.
\begin{thm}
\label{thm:contiguity}
Consider $\mathbb{P}_n$ to be the sequence of distributions under $\mathrm{H}_0$, and $\mathbb Q_n$ to be the sequence of distributions under $\mathrm{H}_1$. If $\sum_{i=1}^{m}\lambda^2_i+\mu^2/\gamma<1$, then under $\mathbb P_n$,
\begin{align}
\log\frac{d\mathbb Q_n}{d\mathbb{P}_n}&\overset{d}{\rightarrow}\sum\limits_{K=1}^{\infty}\Bigg\{\sum_{k_1+\cdots+k_m=K}\nu_{(k_1,\cdots,k_m)}\log(1+\delta_{k_1,\cdots,k_m})-\sum_{k_1+\cdots+k_m=r}\lambda_{k_1,\cdots,k_m}\delta_{k_1,\cdots,k_m}\\
&\hspace{1.3in}+\sum_{\substack{k_1+\cdots+k_m+\ell=K\\\ell \neq 0}}\frac{2\mu_{k_1,\cdots,k_m,\ell}Z_{(k_1,\cdots,k_m,\ell)}-\mu^2_{k_1,\cdots,k_m,\ell}}{2\sigma^2_{k_1,\cdots,k_m,\ell}}\Bigg\}.
\end{align}
Further,there exists a constant $K(\varepsilon,\delta)>0$, such that for all sequence $\{n_k\}$, there exists a further subsequence $\{n_{k_\ell}\}$ such that,
\begin{align}
\label{eq:approx_lik_ratio}
\limsup\limits_{\ell \rightarrow \infty}\mathbb P_{n_{k_\ell}}\Bigg[\Bigg|\log\frac{d\mathbb Q_n}{d\mathbb{P}_n}-\sum\limits_{r=1}^{K}\Bigg\{\sum_{k_1+\cdots+k_m=r}Y_{n_{k_\ell},k_1,\cdots,k_m,0}\log(1+\delta_{k_1,\cdots,k_m})-\sum_{k_1+\cdots+k_m=r}\lambda_{k_1,\cdots,k_m}\delta_{k_1,\cdots,k_m}\nonumber\\
\hspace{1.6in}+\sum_{\substack{k_1+\cdots+k_m+\ell=r\\\ell \neq 0}}\frac{2\mu_{k_1,\cdots,k_m,\ell}Y_{n_{k_\ell},k_1,\cdots,k_m,\ell}-\mu^2_{k_1,\cdots,k_m,\ell}}{2\sigma^2_{k_1,\cdots,k_m,\ell}}\Bigg\}\Bigg|\ge \varepsilon\Bigg]\le \delta.
\end{align}
\end{thm}
\revsag{This theorem decomposes the asymptotic distribution of the log-likelihood ratio in terms of the distribution of simpler cycle statistics. An alternative way to interpret this decomposition is as follows: The distribution of the log-likelihood ratio under the null approximately belongs to an exponential family with the sufficient statistics $\{Y_{n_{k_\ell},k_1,\cdots,k_m,\ell}\}$ and the natural parameters $\{\log(1+\delta_{k_1,\cdots,k_m})\}$ and $\{\mu_{k_1,\cdots,k_m,\ell}/\sigma^2_{k_1,\cdots,k_m,\ell}\}$. This decomposition can serve as a first step towards designing computationally efficient tests achieving the same optimal power as the likelihood ratio test for the hypotheses defined in \eqref{eq:hyp_1}. Since,  Theorem \ref{thm:contiguity} characterizes the asymptotic distribution of the log-likelihood ratio; to construct asymptotically level $\alpha$ test for the hypothesis \eqref{eq:hyp_1} it suffices to approximate the quantiles of the distribution outlined in \eqref{eq:approx_lik_ratio}. Typically, such quantiles are estimated by Monte-Carlo simulation of the statistic. However, counting the number of cycles of all possible sizes in a graph is an NP-hard problem and hence no polynomial time algorithm can be used to approximate these quantiles. It remains an interesting problem to study if there are good approximation algorithms that can be combined with Monte Carlo simulations to estimate such quantiles. We defer the proof of the theorem to Appendix \ref{appendix:proofofthm3}.}

\section{Weak Recovery of the Planted Partition}
\label{subsec: weak_recovery}

Next, we turn to the problem of the weak recovery of the partition $\bm{\sigma}$. 
\begin{defn}[\cite{lu2020contextual, ma_nandy}]
An estimator $\hat{\bm\sigma} := \hat{\bm\sigma}(\bm A_1, \dots, \bm A_m, \bm B)$ of $\bm \sigma$  achieves weak recovery under $\mathbb{P}_{\bm{\lambda}, \mu}$ if
\[
\liminf_{n\to\infty}\frac{1}{n}\E_{\bm\lambda, \mu}[|\langle \hat{\bm{\sigma}}, \bm{\sigma} \rangle | ] > 0.
\]
Weak recovery is said to be possible under $\mathbb{P}_{\bm{\lambda}, \mu}$ if there exists an estimator $\hat{\mathbf\sigma}$ that achieves weak recovery under $\mathbb{P}_{\bm{\lambda}, \mu}$.
\end{defn}

In \cite{ma_nandy}, it is shown that the weak recovery threshold coincides with the detection threshold mentioned in Theorem~\ref{thm:detection_threshold} for contextual multilayer networks with diverging average degree parameters $ d_k$s. The result below generalizes this to the almost constant average degree setup of the current paper.

\begin{thm}
\label{thm:weak_recovery_threshold}
When $\sum_{k=1}^m \lambda_k^2 + \mu^2/\gamma < 1$ and $d_k \ge 1$ for $1 \le k \le m$, for any estimator $\hat{\bm\sigma}$ of $\bm \sigma$, we have
\[
\frac{1}{n}\E [ |\langle \hat{\bm\sigma}, \bm\sigma \rangle| ] \to 0,
\]
i.e., weak recovery is not possible. On the other hand, when $\sum_{k=1}^m \lambda_k^2 + \mu^2/\gamma > 1$, weak recovery is possible.
\end{thm}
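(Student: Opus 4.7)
My plan splits the theorem into the impossibility and achievability directions, using complementary techniques.

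For the impossibility direction, assume $\sum_{k=1}^m \lambda_k^2 + \mu^2/\gamma < 1$. Under $\mathbb{P}_{\bm{0},0}$, the cluster vector $\bm{\sigma}$ is uniform on $\{-1,+1\}^n$ and, by construction, independent of the data, because setting $\lambda_k = 0$ and $\mu = 0$ removes all dependence of edges and covariates on $\bm{\sigma}$. For any data-measurable estimator $\hat{\bm{\sigma}}$, the inner product $\langle\hat{\bm{\sigma}},\bm{\sigma}\rangle$ is therefore a sum of $n$ independent Rademacher signs conditional on $\hat{\bm{\sigma}}$, so Hoeffding's inequality yields $\mathbb{E}_{\bm{0},0}|\langle\hat{\bm{\sigma}},\bm{\sigma}\rangle|/n = O(n^{-1/2})$, and in particular $|\langle\hat{\bm{\sigma}},\bm{\sigma}\rangle|/n \to 0$ in $\mathbb{P}_{\bm{0},0}$-probability. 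Theorem \ref{thm:detection_threshold} provides contiguity of $\mathbb{P}_{\bm{\lambda},\mu}$ to $\mathbb{P}_{\bm{0},0}$; the joint distribution of $(\bm{\sigma},\text{data})$ inherits this contiguity from the marginal one, because $\bm{\sigma}$ is uniform and independent of the data under the null, so the joint likelihood ratio equals the conditional-data-given-$\bm{\sigma}$ ratio, whose $L^2(\mathbb{P}_{\bm{0},0})$ norm is uniformly bounded in the contiguity regime by the same second-moment computation underlying Theorem \ref{thm:detection_threshold}. Consequently $|\langle\hat{\bm{\sigma}},\bm{\sigma}\rangle|/n \to 0$ in $\mathbb{P}_{\bm{\lambda},\mu}$-probability, and boundedness in $[0,1]$ upgrades this to convergence in expectation.

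For the achievability direction, assume $\sum_{k=1}^m \lambda_k^2 + \mu^2/\gamma > 1$. I would construct an estimator from long path statistics on the factor graph $G_F$. Fix $L = \lfloor c \log n \rfloor$ for a small constant $c > 0$, and for each pair of distinct variable nodes $i,j \in V_0$ define
\[
\hat\rho_{ij} \;:=\; \sum_{\substack{\pi \,:\, i \to j \\ |\pi| = L}} \prod_{e \in \pi} w(e),
\]
where the sum is over non-backtracking self-avoiding paths of length $L$ in $G_F$ from $i$ to $j$. The weight $w(e)$ is the centered and normalized edge weight: an edge of type $E_{1_k}$ contributes $(A^{(k)}_e - d_k/n)/\sqrt{d_k/n}$ and an edge of type $E_2$ contributes $B_e/\sqrt{n/\gamma}$. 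With these normalizations, a broadcast computation on the local neighborhood of $i$ in $G_F$ --- which in the sparse regime converges to a multi-type Galton--Watson tree with $m$ independent Poisson$(d_k)$ branches (one per network layer) and Gaussian-labeled leaves (one per covariate) --- gives $\mathbb{E}[\hat\rho_{ij}\mid\bm{\sigma}] = \Theta\big(\sigma_i\sigma_j\,(\sum_k\lambda_k^2 + \mu^2/\gamma)^L\big)$ with variance of constant order. The hypothesis $\sum_k\lambda_k^2 + \mu^2/\gamma > 1$ is precisely the Kesten--Stigum-type condition that drives the signal to dominate the noise as $L \to \infty$; it reduces to the threshold of Mossel--Neeman--Sly when $m=1, \mu=0$, and to that of \cite{lu2020contextual} when $m=1$. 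The estimator $\hat{\bm{\sigma}}$ is then the sign of the leading eigenvector of $(\hat\rho_{ij})$, which yields $\liminf_n (1/n)\mathbb{E}|\langle\hat{\bm{\sigma}},\bm{\sigma}\rangle| > 0$.

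The main obstacle is the second-moment control of $\hat\rho_{ij}$ in the constant-degree regime. Pairs of paths that share edges contribute covariance terms whose combinatorics interleave two ingredients: path counts across the $m$ Poisson-branch network components, extending the techniques of \cite{mnsf} and the cycle-statistic computations underlying Theorem \ref{thm:cycle-stat-asymp-dist}, and products of Gaussian covariate weights along $E_2$-edges, expanded via Wick's formula as in \cite{lu2020contextual}. The novelty here, over \cite{lu2020contextual}, is combining both contributions on the same factor graph simultaneously, which is where I expect the bulk of the technical work to sit. The choice $L = \Theta(\log n)$ is constrained from two sides: small enough that the $O(L)$-th moment computation remains dominated by pairs of paths without edge overlap, and large enough that $(\sum_k\lambda_k^2 + \mu^2/\gamma)^L$ diverges sufficiently fast to produce a macroscopic overlap. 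Enumerating paths of length $\log n$ on $G_F$ then takes $n^{O(\log n)}$ time, yielding the quasi-polynomial algorithm promised in the abstract.
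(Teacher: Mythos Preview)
Your impossibility argument contains a genuine gap. You correctly observe that under $\mathbb{P}_{\bm 0,0}$ the overlap $|\langle\hat{\bm\sigma},\bm\sigma\rangle|/n\to 0$ in probability, but the transfer to $\mathbb{P}_{\bm\lambda,\mu}$ fails: the random variable $|\langle\hat{\bm\sigma},\bm\sigma\rangle|/n$ is a function of the \emph{pair} $(\bm\sigma,\text{data})$, so you would need contiguity of the joint laws, not just of the marginal data laws provided by Theorem~\ref{thm:detection_threshold}. Your claimed justification---that the joint likelihood ratio equals the conditional ratio $\widetilde{\mathbb{P}}_{\bm\lambda,\mu}(\cdot\mid\bm\sigma)/\mathbb{P}_{\bm 0,0}(\cdot)$ and that its $L^2$ norm is bounded ``by the same second-moment computation''---is incorrect. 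The second-moment computation in the proof of Theorem~\ref{thm:detection_threshold} averages over two \emph{independent} copies $\bm\sigma,\bm\tau$, so that $\rho=\langle\bm\sigma,\bm\tau\rangle/n$ concentrates near $0$; the joint second moment forces $\bm\tau=\bm\sigma$, i.e.\ $\rho=1$, and the factor $\exp\bigl(n\rho^2\sum_k\lambda_k^2/2\bigr)$ in \eqref{eq:WV-expectation} diverges. In fact joint contiguity is \emph{false} whenever any $\lambda_k>0$: the event $\bigl\{\sum_{(i,j)\in E(\bm G_k)}\sigma_i\sigma_j>cn\bigr\}$ has $\mathbb{P}_{\bm 0,0}$-probability $o(1)$ (since $\bm\sigma\perp\bm G_k$) but $\mathbb{P}_{\bm\lambda,\mu}$-probability $1-o(1)$ for small $c>0$, because the signed edge count concentrates near $\lambda_k\sqrt{d_k}\,n/2$. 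This is precisely why the paper does not argue via contiguity here but instead shows, through Propositions~\ref{prop: conditional_TV_convergence} and~\ref{prop: posterior_TV_convergence}, that the \emph{posterior} $\mathbb{P}_{\bm\lambda,\mu}(\sigma_u\mid\text{data},\bm\sigma_S)$ is asymptotically uninformative, following \cite{banerjee2018contiguity,lu2020contextual}.

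Your achievability sketch is in the right spirit and close to the paper's construction, with two differences worth noting. First, the paper does not sum over all self-avoiding paths of a given length $L$: it fixes the wedge profile $(k_1,\dots,k_m,\ell)$ with $k_j/k\approx\lambda_j^2/(\sum_i\lambda_i^2+\mu^2/\gamma)$ and $\ell/k\approx(\mu^2/\gamma)/(\sum_i\lambda_i^2+\mu^2/\gamma)$, and it is this specific allocation (via a Stirling computation in the proof of Lemma~\ref{lem:psd}) that makes the multinomial count $\binom{k}{k_1,\dots,k_m,\ell}$ beat the variance factor $\prod_j\lambda_j^{-2k_j}(\mu^2/\gamma)^{-\ell}$. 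Your formulation leaves this choice implicit. Second, the paper extracts $\hat{\bm\sigma}$ from the path-statistic matrix $\widehat\Sigma$ not by its top eigenvector but by the SDP-plus-Gaussian-rounding scheme of \cite{8104074}, for which Lemma~\ref{lem:psd} is exactly the hypothesis needed; the eigenvector route would require a separate spectral argument that you have not supplied.
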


Under the contiguity regime $\sum_{k=1}^m \lambda_k^2 + \mu^2/\gamma < 1$, the impossibility of weak recovery follows by analyzing the posterior distribution of the components $\bm\sigma_u$ of $\bm\sigma$ given the data $\bm A_1, \dots, \bm A_m, \bm B$, and a disjoint component $\bm\sigma_S$ (where $u \not\in S \subset [n]$) of $\bm \sigma$. 

When $\sum_{k=1}^m \lambda_k^2 + \mu^2/\gamma > 1$, we describe in the following, a quasi-polynomial time algorithm using self-avoiding walks on the factor graph described before to compute an estimator that achieves weak recovery. The proof of Theorem \ref{thm:weak_recovery_threshold} is deferred to Appendix \ref{appendix:proofofthm4}.
\subsection{Weak recovery via self-avoiding walks.} Let 
\[
\hat A_{i_1, i_2}^{(k)} = \frac{2n}{a_k - b_k} \left ( A_{i_1, i_2}^{(k)} - \frac{a_k+b_k}{2n} \right )
\]
for each $k = 1, \dots, m$, where $i_1\neq i_2 \in [n]$, and similarly define for all $j \in [p]$
\[
\hat B^j_{i_1, i_2} = \frac{n}{\mu} B_{i_1, j} B_{i_2, j} = \frac{n}{\mu} \left ( \sqrt{\frac{\mu}{n}}\sigma_{i_1} u_j + Z_{i_1,j} \right ) \left ( \sqrt{\frac{\mu}{n}}\sigma_{i_2} u_j + Z_{i_2,j} \right ).
\]
The expectation and variance of the above terms are
\begin{align*}
    \E[\hat A_{i_1, i_2}^{(k)} | \sigma] = \sigma_{i_1} \sigma_{i_2}, \quad & \quad \Var \left ( \hat A_{i_1, i_2}^{(k)} \right ) = \frac{n}{\lambda_k^2}, \\
    \E[\hat B_{i_1, i_2}^j | \sigma] = \sigma_{i_1} \sigma_{i_2}, \quad & \quad \Var \left ( \hat B_{i_1, i_2}^j \right ) = \frac{np}{\mu^2/\gamma}(1+o(1)).
\end{align*}

We associate the weight $A_{i_1, i_2}^{(k)}$ to the type $\bm{A}_{k}$ wedge between variable nodes $i_1$ and $i_2$, and the weight $\hat B_{i_1, i_2}^j$ to the type $\bm{B}$ wedge between variable nodes $i_1$ and $i_2$ including the $j^{th}$ $\bm{B}$-Type factor node. We call a path $\alpha$ starting at the variable node $i_1$ and ending at the variable node $i_2$ to be \emph{self-avoiding} if it visits no factor (type $\bm{B}$) node twice and does not have two types $\bm{A}$ wedges between the same pair of variable nodes. The total weight corresponding to $\alpha$ is defined as $p_\alpha := \prod_{e \in \alpha} w(e)$, where $e$ are wedges in the path $\alpha$. 

Consider a self-avoiding walk $\alpha$ with $k_i$ type $\bm{A}_{i}$ wedges for $1 \le i \le m$ and $\ell$ type $\bm{B}$ wedges connecting the variable nodes $i_1$ and $i_2$. Then, it can be shown that
\begin{equation}
\E[p_\alpha|\bm{\sigma}] = \sigma_{i_1}\sigma_{i_2},\quad \Var(p_\alpha) = \prod_{j=1}^m \left ( \frac{n}{\lambda_j^2} \right)^{k_j} \left ( \frac{np}{\mu^2/\gamma} \right)^\ell (1 + o(1)). 
\end{equation}

Our estimator of the matrix $\Sigma = \bm{\sigma}\bm{\sigma}^\top $ is then given by
\begin{equation}
\label{eq:widehat_sigma}
\widehat\Sigma_{i_1 i_2} := \frac{1}{|\mathcal{W}(i_1, i_2, k_1, \dots, k_m, \ell)|} \sum_{\alpha \in \mathcal{W}(i_1, i_2, k_1, \dots, k_m, \ell)} p_{\alpha},
\end{equation}
where $\mathcal{W}(i_1, i_2, k_1, \dots, k_m, \ell)$ is the set of all self-avoiding walks between factor nodes $i_1$ and $i_2$ on the factor graph that have $k_j$ type $\bm{A}_{j}$ edges for $1 \leq j \leq m$, and $\ell$ type $\bm{B}$ wedges. Here, $|\mathcal{W}(i_1, i_2, k_1, \dots, k_m, l)|$ denotes the cardinality of this set. The matrix $\widehat{\Sigma}$ satisfies the following reverse Cauchy Schwartz type inequality.
\begin{lem}
\label{lem:psd}
For any $\varepsilon>0$, if $\sum_{k=1}^{m}\lambda^2_k+\mu^2/\gamma \ge (1+\varepsilon)$, then there exists a constant $\delta=\delta(\lambda_1,\cdots,\lambda_m,\mu,\gamma,\varepsilon)>0$, such that,
\[
\mathbb{E}_{\bm \lambda,\mu}\left[\langle\widehat{\Sigma},\bm\sigma\bm\sigma^\top\rangle\right] \ge \delta \mathbb{E}_{\bm \lambda,\mu}\left[\|\widehat{\Sigma}\|^2_F\right]^{1/2}n,
\]
where $\bm\sigma$ is the vector of true partition assignments.
\end{lem}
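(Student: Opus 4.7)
The plan is a second moment analysis of $\widehat\Sigma$, adapted to the factor graph $G_F$ of the multilayer contextual model in the spirit of \cite{mnsf, lu2020contextual}. The signal side is essentially for free: the identity $\E[p_\alpha\mid\bm\sigma]=\sigma_{i_1}\sigma_{i_2}$ noted in the paragraph preceding the lemma (a consequence of $\E[\hat A^{(k)}_{uv}\mid\bm\sigma]=\E[\hat B^j_{uv}\mid\bm\sigma]=\sigma_u\sigma_v$ together with telescoping of $\sigma_v^2=1$ across the internal variable nodes of a self-avoiding walk) yields $\E[\widehat\Sigma_{i_1i_2}\mid\bm\sigma]=\sigma_{i_1}\sigma_{i_2}$, hence $\E_{\bm\lambda,\mu}\langle\widehat\Sigma,\bm\sigma\bm\sigma^\top\rangle=n^2(1-o(1))$ after separating out the $O(n)$ diagonal terms.

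The heart of the argument is the Frobenius bound $\E\|\widehat\Sigma\|_F^2\le C(\varepsilon)\,n^2$. Writing $\widehat\Sigma_{i_1 i_2}^2=|\mathcal W|^{-2}\sum_{\alpha,\beta\in\mathcal W}p_\alpha p_\beta$, I would classify pairs $(\alpha,\beta)$ by their overlap pattern on $G_F$, tracking (i) fully shared wedges broken down by type $(s_1,\dots,s_m,t)$ and (ii) B-factor nodes common to $\alpha$ and $\beta$ whose adjacent B-wedges are distinct. For a given pattern, non-shared wedges contribute a telescoping factor equal to $\sigma_{i_1}^2\sigma_{i_2}^2=1$, while shared objects contribute second- and fourth-moment factors of the centered weights, of the schematic form $\prod_j(n/\lambda_j^2)^{s_j}(n^2/\mu^2)^{t}$ times lower-order corrections from shared factor nodes and shared intermediate variable nodes. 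The number of SAW pairs realising a given pattern is, up to polynomial factors in $K=k_1+\cdots+k_m+\ell$, at most $|\mathcal W|^2$ divided by one factor of $n$ per shared variable node and one factor of $p$ per shared B-factor node; these cancellations are precisely what reduces the per-pattern contribution to something of the form $\prod_j(\lambda_j^{-2})^{s_j}((\mu^2/\gamma)^{-1})^{t+r}$, with no remaining $n$-factors.

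Summing the resulting bound over all non-trivial overlap patterns and applying a multinomial identity collapses the sum into a geometric series in $\sum_j\lambda_j^2+\mu^2/\gamma$. Under the hypothesis $\sum_j\lambda_j^2+\mu^2/\gamma\ge 1+\varepsilon$, and with $K$ taken of order $\log n$ (a choice that is also what makes the downstream estimator quasi-polynomial time computable), this series is bounded by a constant $C(\varepsilon)$ depending only on $\varepsilon$ and the model parameters. Summing over $(i_1,i_2)$ yields $\E\|\widehat\Sigma\|_F^2\le C(\varepsilon)\,n^2$, and combining with the signal estimate gives the stated reverse Cauchy Schwartz inequality with $\delta=C(\varepsilon)^{-1/2}(1-o(1))$.

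The main obstacle is the combinatorial bookkeeping for B-type overlaps. Because $\hat B^j_{uv}=(n/\mu)B_{u,j}B_{v,j}$ depends on the latent Gaussian $u_j$ as well as on the independent noises $Z_{u,j},Z_{v,j}$, two B-wedges can become correlated through several distinct mechanisms -- sharing both endpoints and the factor node, sharing only the factor node (picking up the non-trivial fourth moment of $u_j$), or sharing one endpoint together with the factor node (picking up an extra $n/\mu$ factor from a paired Gaussian) -- each contributing a different-sized term to the covariance. Enumerating these configurations without double counting on $G_F$ and verifying that, despite this complication, the aggregated variance still collapses to a geometric series in $\sum_j\lambda_j^2+\mu^2/\gamma$ with threshold exactly $1$ is the most delicate step, and is where the multilayer contextual structure requires genuinely new work beyond the single-network analysis of \cite{lu2020contextual}.
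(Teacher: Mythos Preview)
Your overall architecture is right: the signal side follows from the telescoping identity $\E[p_\alpha\mid\bm\sigma]=\sigma_{i_1}\sigma_{i_2}$, and the Frobenius side is a second-moment calculation over pairs of SAWs classified by their shared wedges $(s_1,\dots,s_m,t)$, with per-pair contribution of order $\prod_j\lambda_j^{-2s_j}(\mu^2/\gamma)^{-t}$ after the $n$- and $p$-cancellations you describe. That much matches the paper.

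The gap is in the step where you write that ``applying a multinomial identity collapses the sum into a geometric series in $\sum_j\lambda_j^2+\mu^2/\gamma$.'' No multinomial identity does this. If you sum $\prod_j\lambda_j^{-2s_j}(\mu^2/\gamma)^{-t}$ over all type-breakdowns of a total overlap $S$ with the natural multinomial weight, you get $(\sum_j\lambda_j^{-2}+(\mu^2/\gamma)^{-1})^S$, which is the wrong quantity and can be unbounded even when the combined SNR exceeds $1$ (take $m=2$, $\mu=0$, $\lambda_1^2=\lambda_2^2=0.6$: then $\sum_j\lambda_j^2=1.2>1$ but $\sum_j\lambda_j^{-2}=10/3$). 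So the hypothesis $\sum_j\lambda_j^2+\mu^2/\gamma\ge 1+\varepsilon$ does not enter your argument as written, and merely taking $K=O(\log n)$ does not close it.

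What you are missing is the one non-automatic step in the paper's proof: the \emph{choice} of the walk-type profile $(k_1,\dots,k_m,\ell)$ defining $\widehat\Sigma$. The paper takes
\[
\frac{k_j}{K}=\frac{\lambda_j^2}{\sum_i\lambda_i^2+\mu^2/\gamma},\qquad \frac{\ell}{K}=\frac{\mu^2/\gamma}{\sum_i\lambda_i^2+\mu^2/\gamma},
\]
and then observes via Stirling that the multinomial coefficient $K!/(k_1!\cdots k_m!\ell!)$ appearing in $|\mathcal W|$ is, to leading exponential order, at least $\prod_j\lambda_j^{-2k_j}(\mu^2/\gamma)^{-\ell}$ precisely when $\sum_i\lambda_i^2+\mu^2/\gamma\ge 1+\varepsilon$. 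This is where the threshold enters, and it is what turns the raw variance blowup into an $O(1)$ bound on $\E[\widehat\Sigma_{i_1i_2}^2]$. Your proposal never makes this choice (you only pin down the total length $K$), and without it the argument does not go through.
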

Now, let us consider the $n\times n$ matrix $\widehat{\Psi}$ which is the solution to the following convex program.
\begin{gather}
\min_{\Psi}\left\|\Psi\right\|^2_{F}\nonumber\\
\mbox{s.t. } \mbox{diag}(\Psi)=1, \quad \nonumber \\ 
\quad \ \langle\widehat{\Sigma},\Psi\rangle \ge \delta n\,\|\widehat{\Sigma}\|_F,\ \mbox{and}\ \Psi\succeq 0.\label{eq:opt_problem}
\end{gather}
We can obtain $\widehat{\Psi}$ by solving \eqref{eq:opt_problem} using $\delta$'s constructed in Lemma \ref{lem:psd} and $\widehat{\Sigma}$ defined in \eqref{eq:widehat_sigma}. The estimator for $\bm \sigma$, denoted by $\widehat{\bm \sigma}$, is given by co-ordinate wise signs of $\bm Z$, a $n$ dimensional Gaussian vector with mean $\bm 0$ and variance $\widehat{\Psi}$. Then by Lemma \ref{lem:psd}, Markov Inequality, and Lemma 3.5 of \cite{8104074} we can deduce that if $k=O(\log n)$ then with high probability $\widehat{\bm \sigma}$ weakly recovers $\bm \sigma$.

To analyze the time complexity of the algorithm,  observe that
\[
|\mathcal{W}(i_1, i_2, k_1, \dots, k_m, \ell)|=\sum_{i=i_0\neq f_{i_0i_1}\neq i_1\neq f_{i_1i_2} \neq i_2\cdots\neq i_{2k}=j}\widetilde{G}_{i_0f_{i_0i_1}}\widetilde{G}_{f_{i_0i_1}i_1}\cdots\widetilde{G}_{i_{2k-1}f_{i_{2k-1}i_{2k}}i_{2k}},
\]
where $\widetilde{G}$ is the adjacency matrix of the entire factor graph $G_F$, $i_0,\cdots,i_{2k}$'s are the variable nodes and $f_{i_0i_1},\cdots,f_{i_
{2k-1}i_{2k}}$'s are the factor nodes. Thus, computing $|\mathcal{W}(i_1, i_2, k_1, \dots, k_m, \ell)|$ is equivalent to computing the matrix power $\widetilde{G}^{2k}$ plus checking in each step to ensure the resulting path is self-avoiding. This is polynomial time in the number of total nodes, which in turn is of order $n^{k+\ell}$ in the proportional asymptotic regime. One can similarly compute $\sum_{\alpha \in \mathcal{W}(i_1, i_2, k_1, \dots, k_m, l)} p_{\alpha}$ by considering appropriately weighted adjacency graphs. Hence, for $k$ and $\ell$ fixed, the matrix $\widehat\Sigma$ can be computed in polynomial time. Further, solving the convex program to find $\widehat{\Psi}$ is also a polynomial time exercise, and hence we can compute $\widehat{\bm \sigma}$ in $O(n^{\log{n}})$ time. As remarked in \cite{8104074}, one can probably improve this to  a polynomial time algorithm by considering non-backtracking walks and color coding, but this is beyond the scope of this paper.
\section{Belief Propagation and Approximate MAP Estimation}
\label{Belief_Propagation}
Let us observe that the quasi-polynomial time algorithm described in the previous section is hard to implement in practice. A commonly used approach in these situations is to consider the \emph{maximum a posteriori} (MAP) estimate of $\bm \sigma$ given the networks $\bm{G}_{1},\cdots, \bm{G}_{m}$ and $\bm B$. However, this requires marginalization over $\bm \sigma \in \{\pm 1\}^n$ and $\bm u \in \mathbb R^p$, which is intractable in polynomial time. Hence, we require some type of approximate method like variational inference via mean field or Bethe approximation. For approximate tree-like graphs, a local algorithm to estimate such planted partition is the Belief Propagation Algorithm (see, \cite{clique}). 

\revsag{In the extremely sparse regime, each of the component networks of the multilayer SBM is approximately tree-like. Hence we design a Belief Propagation algorithm based on the factor graph $G_F$ defined in Section \ref{sec:test} to recover the planted partition. The major idea behind the algorithm is to iteratively compute \emph{wedge messages} $\{\eta^{t}_{i \rightarrow j; \ell}: i,j \in [n], \ell \in [m]\}$ between the vertices of the factor graph $G_F$ along the wedges corresponding to the different layers of the network, \emph{variable node messages} $\{\eta^{t}_{i}: i \in [n]\}$ and \emph{factor node messages} $\{(m^{t}_{q},\tau^t_q): q \in [p]\}$. It is worthwhile to mention that the wedge messages can be decomposed into \emph{edge messages} for the component edges using the techniques specified in \cite{duranthon2024optimal}. But for the sake of simplicity, we avoid doing that in this paper.}

\revsag{
We begin by computing messages $\nu^{t}_{i \rightarrow j;\ell}(\cdot)$ which are the marginal distributions of the variable $\sigma_i$ under $\mathbb P_{i \rightarrow j;\ell}(\bm \sigma,\bm u|\bm G_1,\ldots,\bm G_m,\bm B / \sqrt{p})$ when the posterior distribution is computed after removing all the connections to variable node $j$ in the $\ell$-th network from the factor graph. Similarly, we compute the messages $\nu^{t}_{i \rightarrow q}(\cdot)$ which are the marginal distributions of $\sigma_i$ under $\mathbb P_{i \rightarrow q}(\bm \sigma,\bm u|\bm G_1,\ldots,\bm G_m,\bm B / \sqrt{p})$ when the vertex $u_q$ is removed from the factor graph with all associated connections. Finally, the messages  $\nu^{t}_{q \rightarrow i}(\cdot)$ which are the marginal distributions of $u_q$ under $\mathbb P_{q \rightarrow i}(\bm \sigma,\bm u|\bm G_1,\ldots,\bm G_m,\bm B / \sqrt{p})$ when the $i^{th}$ variable node is removed from the factor graph with all associated connections. Let us observe that both $\nu^{t}_{i \rightarrow j;\ell}(\cdot)$ and $\nu^{t}_{i \rightarrow q}(\cdot)$ for $i,j 
\in [n]$, $\ell \in [m]$ and $q \in [p]$ are discrete distributions supported on $\{\pm 1\}$. Hence to track such distributions it is enough to track the log-odds ratio. Therefore we focus on tracking
\begin{equation}
\label{eq:f}
\eta^t_{i \rightarrow j;\ell}:=\frac{1}{2}\log\frac{\nu^{t}_{i \rightarrow j;\ell}(+1)}{\nu^{t}_{i \rightarrow j;\ell}(-1)},
\end{equation}
and
\begin{equation}
\label{eq:rho}
\eta^t_{i \rightarrow q}:=\frac{1}{2}\log\frac{\nu^{t}_{i \rightarrow q}(+1)}{\nu^{t}_{i \rightarrow q}(-1)}.
\end{equation}
Further, for the messages $\nu^t_{q \rightarrow i}$ we use the \emph{Gaussian ansatz},  
\[
\nu^t_{q \rightarrow i}:=\mathsf{N}\left(\frac{m^t_{q \rightarrow i}}{\sqrt{p}},\frac{\tau^t_{q \rightarrow i}}{p}\right).
\] 
Therefore, for these messages, it is enough to track the one-dimensional parameters $(m^t_{q \rightarrow i},\tau^t_{q \rightarrow i})$.}

\revsag{The update equations of the parameters $\eta^t_{i \rightarrow q}, m^t_{q \rightarrow i}$ and $\tau^t_{q \rightarrow i}$ are further simplified by linearizing their expressions around certain `zero information' fixed points given by $\eta^t_i, m^t_q$ and $\tau^t_q$ for $i \in [n]$ and $q \in [p]$. Therefore it is enough to track the evolution of the parameters $\eta^t_{i \rightarrow j;\ell}, \eta^t_{i}, m^t_{q}$ and $\tau^t_{q}$. These parameters are iteratively updated as follows.
\begin{align}
\label{eq:bp_updates}
    \eta_i^{t+1} =
    &\sqrt{\frac{\mu}{p\gamma}}\sum_{r = 1}^{p}B_{ri}m_r^{t} -\frac{\mu}{\gamma}\left(\sum_{r=1}^{p}\frac{B_{ri}^2\tau_r^t}{p}\right)\tanh\left(\eta_i^{t-1}\right) + \sum_{r=1}^{m}\left\{\sum_{k\in \partial_{r}i}f(\eta^t_{k \rightarrow i; r};\rho_r)-\sum\limits_{k=1}^{n}f(\eta^t_k;\rho_{n,r})\right\},\\
    \eta_{i\rightarrow j,\ell}^{t+1} = 
    & \sqrt{\frac{\mu}{p\gamma}}\sum_{r = 1}^{p}B_{ri}m_r^{t} -\frac{\mu}{\gamma}\left(\sum_{r=1}^{p}\frac{B_{ri}^2\tau_r^t}{p}\right)\tanh\left(\eta_i^{t-1}\right) + \sum_{\substack{r=1\\ r\neq \ell}}^{m}\left\{\sum_{k\in \partial_{r}i}f(\eta^t_{k \rightarrow i; r};\rho_r)-\sum\limits_{k=1}^{n}f(\eta^t_k;\rho_{n,r})\right\}\\
    & + \sum_{k\in\partial_{\ell}i\setminus\{j\}}f(\eta^t_{k \rightarrow i; \ell};\rho_\ell) - \sum\limits_{k=1}^{n}f(\eta^t_k;\rho_{n,\ell}),\\
    \label{eq:bp_updates_end}
    \tau_q^{t+1} = 
    &\left(1 + \mu - \frac{\mu}{p\gamma}\sum_{j=1}^{n}B_{qj}^2\sech^2(\eta^t_j)\right)^{-1}\\
    m_q^{t+1} &= \tau_q^{t+1}\sqrt{\dfrac{\mu}{p\gamma}}\sum_{j=1}^{n}B_{qj}\tanh(\eta_j^{t}) - \frac{\tau_q^{t+1}\mu}{\gamma}\left(\sum_{j=1}^{n}\frac{B_{qj}^2}{p}\sech^2(\eta^t_j)\right)m_q^{t-1}.
\end{align}}

Here, 
\[
f(z;\rho)=\frac{1}{2}\log\frac{\cosh(z+\rho)}{\cosh(z-\rho)};
\]
\[
\rho_\ell=\tanh^{-1}(\lambda_\ell/\sqrt{d_\ell}) \quad \mbox{and} \quad \rho_{n;\ell}=\tanh^{-1}(\lambda_\ell\sqrt{d_\ell}/(n-d_\ell)).
\]
Further, for all $i \in [n]$, $\partial_r i$ is the neighborhood of $i$ in the $r$-th network. We consider the following estimate of $\bm \sigma$ denoted by $\widehat{\bm \sigma}$ which closely approximates the MAP estimate. We run the algorithm for $T_{\max}$ many iterations and estimate the $i$-th node label by,
\begin{align}
\label{eq:wh_sig}
\widehat{\sigma}_i=\mbox{sgn}(\eta^{T_{\max}}_i).
\end{align}
\revsag{The intuitive explanation behind using this estimate is as follows: The messages $\eta^{t}_{i}$ approximates the posterior log-odds of $\sigma_i$ given the networks and $\bm B$. In other words,
\[
\eta^{t}_{i} \approx \log\frac{\mathbb P(\sigma_i = 1|\bm G_1,\ldots,\bm G_m,\bm B/\sqrt{p})}{\mathbb P(\sigma_i = -1|\bm G_1,\ldots,\bm G_m,\bm B/\sqrt{p})}.
\]
Therefore, if $\mathbb P(\sigma_i = 1|\bm G_1,\ldots,\bm G_m,\bm B/\sqrt{p}) > \mathbb P(\sigma_i = -1|\bm G_1,\ldots,\bm G_m,\bm B/\sqrt{p})$, then $\mbox{sgn}(\eta^{t}_i)>0$ and vice-versa. Hence, $\mbox{sgn}(\eta^t_i)$ is a reasonable proxy of $\sigma_i$ after $t$ iterations if $\mathbb P(\sigma_i = 1|\bm G_1,\ldots,\bm G_m,\bm B/\sqrt{p})$ is bounded away from $1/2$. If $\mathbb P(\sigma_i = 1|\bm G_1,\ldots,\bm G_m,\bm B/\sqrt{p}) \approx 1/2$, i.e., the data $\bm G_1,\ldots,\bm G_m,\bm B$ are non-informative about $\sigma_i$, then $\bm \eta^t_i \approx 0$.}
\begin{rem}
Using the techniques used in \cite{ContBlockMod}, this BP algorithm can be further approximated to obtain an approximate linear message passing algorithm as in (13)-(15) of \cite{ContBlockMod}. It is interesting in itself to study the spectrum of the version of the non-backtracking walk on the multilayer network obtained by setting $\mu=0$ in these approximate update equations. However, that is beyond the scope of our current work and we relegate it to future work.
\end{rem}
\section{Numerical Simulations}
\label{experiments}
\revsag{Recall the hypothesis outlined in \eqref{eq:hyp_1}. Towards testing the same, we need to estimate the parameters $(\lambda_1,\ldots,\lambda_m,\mu)$ from the data. However, following the results of \citet{mnsf}, if $(a_k-b_k)^2 \le 2(a_k+b_k)$, it becomes infeasible to consistently test $a_k=b_k$ versus $a_k \neq b_k$ for each individual $k$. Consequently, no method can consistently estimate these parameters. In such situations, any data-dependent estimators of the parameters $a_k,b_k$ would not be good approximations of the true parameters. Therefore, using them to construct the BP updates might lead to low overlap and misleading inference, and it becomes important to rely on prior approximate knowledge of the parameters. But if $(a_k-b_k)^2 > 2(a_k+b_k)$ for all $k \in [m]$, then it is possible to estimate the parameters $a_k$ and $b_k$. One possible technique to estimate these parameters is to consider the sample eigenvalues of the adjacency matrices of the observed networks. The largest eigenvalue of the adjacency matrix of $\bm G_k$ serves as a proxy of $(a_k+b_k)/2$ and the second largest eigenvalue serves as a proxy of $(a_k-b_k)/2$. Hence, from these eigenvalues, it is possible to construct reasonable approximations of the model parameters $a_k,b_k$ for all $k \in [m]$. Furthermore, the square of the largest singular value of $\bm B/\sqrt{p}$ can be used to estimate the parameter $\mu$. Another way to estimate the parameters $a_k,b_k$ is outlined in \citet{mnsf}. These estimated parameters can be used in the algorithm \eqref{eq:bp_updates} and the resulting estimates $\bm{\eta}^{T_{\max}}$ can be used to test the composite null hypothesis \eqref{eq:test}.}
\revsag{However, in this paper, to demonstrate the properties of our Belief Propagation algorithm, we consider following the simpler testing problem:
\begin{equation}
\label{eq:simp_test}
\bm{H}_0: (\lambda_1,\cdots,\lambda_m,\mu)=(0,\cdots,0,0) \quad \quad \mbox{vs} \quad \quad \bm{H}_{1}: (\lambda_1,\cdots,\lambda_m,\mu)= (\lambda^*_{1},\cdots,\lambda^*_{m},\mu^*).
\end{equation}
The same setting has also been considered in \citet{ContBlockMod} and \citet{lu2020contextual}. The above theory on contiguity continues to hold forthe testing problem in \eqref{eq:simp_test}. In particular, the information-theoretic threshold for detectability is still attained at $\sum_{i=1}^{m}(\lambda_i^*)^2+(\mu^*)^2/\gamma=1$.}

Under the null hypothesis $\bm H_0$, \revsag{the data does not provide informative evidence about the planted partition $\bm \sigma$, leading to the norm of $\bm\eta^{T_{\max}}$ being close to zero, as discussed in the previous section.} Consequently, as the combined signal strength $\sum_{i=1}^{m}(\lambda_i^*)^2+(\mu^*)^2/\gamma$, increases we expect an increase in $\|\bm \eta^{T_{\max}}\|_2$. Hence, the test rejects the null when 
\begin{align}
\label{eq:test}
\|\bm \eta^{T_{\max}}\|_2>\|\bm \eta^{0}\|_2,
\end{align}
where $\bm \eta^0$ is an initializer generated independently from a $\mathsf{N}(0,0.01)$.

\revsag{We study the empirical power of the simple versus simple hypothesis test outlined in \eqref{eq:simp_test}. In the same settings, we also analyze the empirical overlap of the BP estimate $\widehat{\bm \sigma}$ with the true $\bm \sigma$. These estimates are in some sense, an oracle estimate since they are constructed using the knowledge of $a_k,b_k$ and $\mu$. Again in relatively higher signal-to-noise ratio regimes, we can use the plug-in estimates of these parameters constructed from the data to provide data-adaptive estimates.}
\begin{figure}[h]
  \centering
  \begin{subfigure}[b]{0.45\textwidth}
    \centering
    \includegraphics[width=\textwidth]{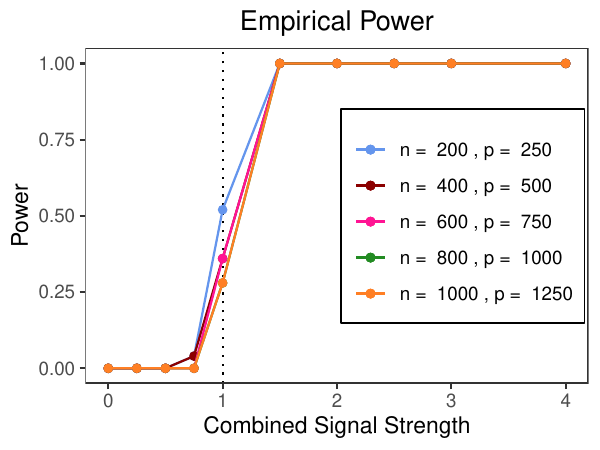}
    \caption{Average power as a function of combined signal strength}
    \label{fig:plot_n_power}
  \end{subfigure}
  \begin{subfigure}[b]{0.45\textwidth}
    \centering
    \includegraphics[width=\textwidth]{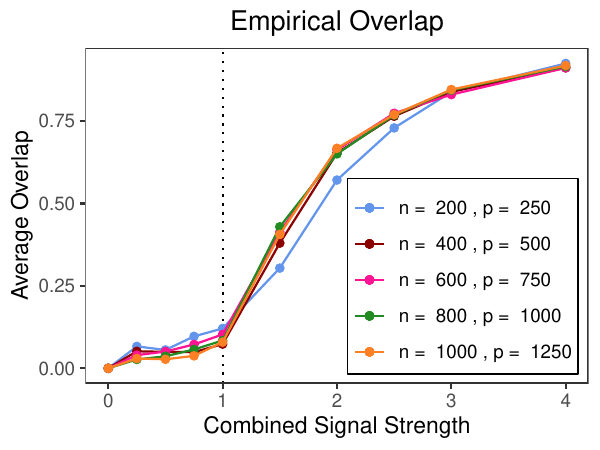}
    \caption{Average overlap as a function of combined signal strength}
    \label{fig:plot_n_overlap}
  \end{subfigure}
  \caption{Variation of power and empirical overlap with $n$ and $p$}
\end{figure}
\subsection{Dependence on the sample size}
\label{exp_var_samp_size}
In this experiment, we study the variation in the power of the test given by \eqref{eq:test} and the empirical overlap of $\widehat{\bm \sigma}$ with $\bm \sigma$ as a function of the combined signal strength $$t=\sum_{i=1}^{m}(\lambda_i^*)^2+(\mu^*)^2/\gamma,$$ for various values of $n$ and $p$. We fix the number of networks $m=3$ and the average degrees are taken to be $d_1=3,~ d_2=2,~ d_3=2$. We vary the combined signal strength on the set $t \in \{0, 0.25, 0.5, 0.75, 1, 1.5, 2, 2.5, 3, 4\}$. Moreover, we set individual signals as 
\[
(\lambda^*_i)^2=\frac{t}{5} \quad \mbox{for $i=1,2,3$, ~and,} \quad (\mu^*)^2 = \frac{2\gamma~t}{5}.
\]
The Belief Propagation updates are iterated for $50$ iterations to compute $\bm \eta_{50}$, which serves as a test statistic. The probability of rejection for different values of $t$ is empirically estimated by averaging over 25 independent replications of the experiment. 

In Figure \ref{fig:plot_n_power}, we illustrate the empirical power as a function of $t$ for various values of $n$ and $p$. As indicated by our theoretical results, the test remains powerless below $1$, achieves non-trivial power at $1$, and achieves power equal to $1$ when the combined signal strength exceeds 1. An interesting finding from this plot is that as $n$ increases, the power at $1$ is closer to zero. This indicates that a sharp phase transition may be possible at $1$ when $n \rightarrow \infty$. Furthermore, we plot the estimated average overlap between $\widehat{\bm \sigma}$ and $\bm \sigma$ as a function of $t$ in Figure \ref{fig:plot_n_overlap}. Here, we observe that the average overlap remains close to zero below $1$ and shows a sharp increase when the combined signal strength exceeds $1$. This corroborates our theoretical claims.

\begin{figure}[!h]
  \centering
  \begin{subfigure}[b]{0.45\textwidth}
    \centering
    \includegraphics[width=\textwidth]{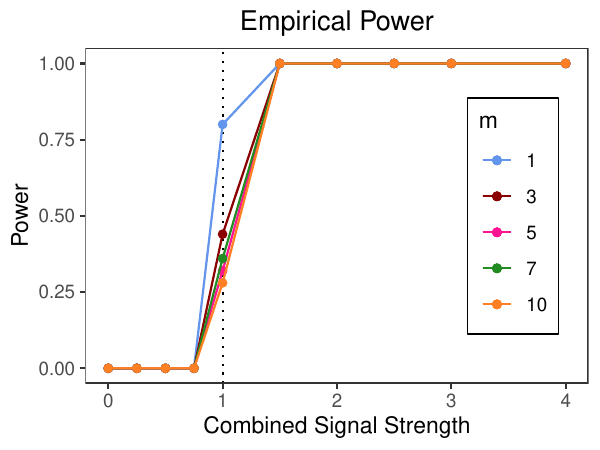}
    \caption{Average power as a function of combined signal strength}
    \label{fig:plot_m_power}
  \end{subfigure}
  \begin{subfigure}[b]{0.45\textwidth}
    \centering
    \includegraphics[width=\textwidth]{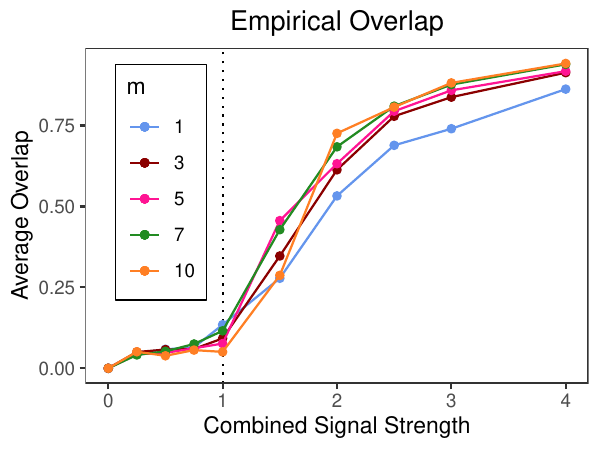}
    \caption{Average overlap as a function of combined signal strength}
    \label{fig:plot_m_overlap}
  \end{subfigure}
   \caption{Variation of power and empirical overlap with the number of networks $m$}
\end{figure}
\subsection{Dependence on the number of networks}
\label{exp_var_num_networks}
In this subsection, we explore the dependence of the power of the test given by \eqref{eq:test} and the average overlap between the BP estimate of $\bm \sigma$ with the ground truth as a function of the number of component networks $m$. Here, we fix $n=400$ and $p=500$. The signal strengths of the component networks are chosen as follows:
\[
(\lambda^*_i)^2=\frac{t}{(m+2)} \quad \mbox{for $i=1,\ldots,m$, ~and,} \quad (\mu^*)^2 = \frac{2\gamma~t}{(m+2)},
\]
where $t$ is the combined signal strength. We vary $t \in \{0, 0.25, 0.5, 0.75, 1, 1.5, 2, 2.5, 3, 4\}$ and study the power and empirical overlap as a function of $t$ for the number of networks $m$ varying in $\{1, 3, 5, 7, 10\}$. The average degrees of the component networks namely $d_1,\ldots,d_m$ are sampled uniformly from the set $\{2,3,4,5\}$ in the beginning and fixed for the rest of the experiment. The empirical estimates of the power and the average overlap are computed by averaging over $25$ independent iterations of the experiment.

In Figures \ref{fig:plot_m_power} and \ref{fig:plot_m_overlap}, we illustrate how the power of the test given by \eqref{eq:test} varies for various values of $m$. We observe that for all values of $m$, both the power and the average overlap are close to zero when $t$ is below $1$. Both of them sharply rise when $t$ exceeds $1$. This behaviour aligns with our theoretical findings. 

Furthermore, Figure \ref{fig:plot_m_overlap} also illustrates a general upward trend in the empirical overlap with an increase in the number of networks. In the sparse regime considered in this paper, the stochastic block model generates networks that are locally tree-like. The Belief propagation mechanism is known to perform better in such networks in terms of extracting information about the latent parameter. In contrast, the factor graph corresponding to the covariates is a fully connected network. Hence, as the contribution of the networks increase in the total signal strength, we observe better performance in terms of empirical overlap. 

\begin{figure}[h]
  \centering
  \begin{subfigure}[b]{0.45\textwidth}
    \centering
    \includegraphics[width=\textwidth]{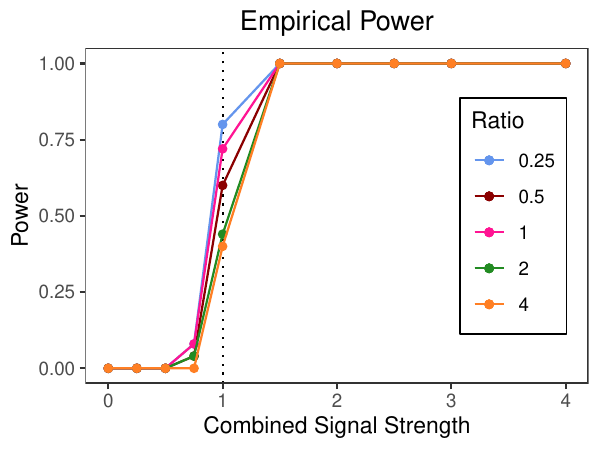}
    \caption{Average power as a function of combined signal strength}
    \label{fig:plot_r_power}
  \end{subfigure}
  \begin{subfigure}[b]{0.45\textwidth}
    \centering
    \includegraphics[width=\textwidth]{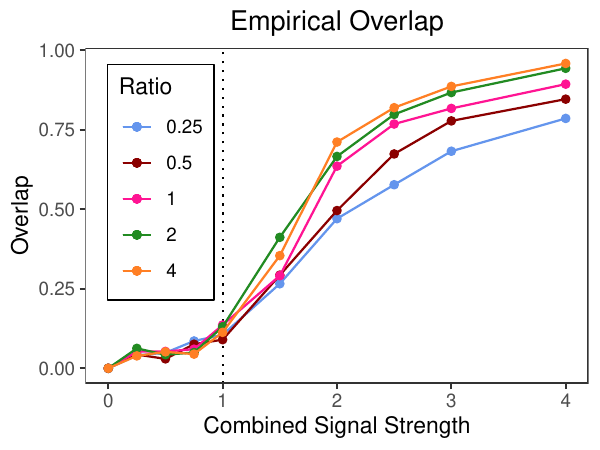}
    \caption{Average overlap as a function of combined signal strength}
    \label{fig:plot_r_overlap}
  \end{subfigure}
  \caption{Variation of power and empirical overlap with the ratio $\frac{\lambda_{1}^{2}+\ldots+\lambda_{m}^{2}}{\mu^{2} / \gamma}$}
\end{figure}

\subsection{Dependence on the ratio of the signal from covariates to the signal from the networks}
\label{exp_ratio}
In this experiment, we study the power of the test \eqref{eq:test} for the hypothesis \eqref{eq:simp_test} as a function of the ratio $r=\frac{(\lambda^*_1)^2+\ldots+(\lambda^*_m)^2}{(\mu^*)^2/\gamma}$. We fix $m=2$ and take the average degrees to be $d_1=3,~d_2=2$. Furthermore, we vary $r \in \{0.25,0.5,1,2,4\}$ and the combined signal strength $t \in \{0, 0.25, 0.5, 0.75, 1, 1.5, 2, 2.5, 3, 4\}$. The individual signals are taken as 
\[(\lambda^*_1)^2=(\lambda^*_2)^2= \frac{r~t}{2(r+1)}, \quad \mbox{and,} \quad (\mu^*)^2=\frac{\gamma~t}{r+1}.
\]
Fixing $n=400$ and $p=500$, we study the empirical power, and the average overlap. The experimental values are computed by averaging over 25 independent iterations of the experiment. 

The empirical power as a function of the combined signal strength is plotted in Figure \ref{fig:plot_r_power}. The behavior of the power is similar to the other two experiments described before. For the average overlap, we observe an increase in the overlap value with the increase in the contribution of the network components in the combined signal strength. Like before, this phenomenon is explained by the fact that the BP iterates are more accurate in estimating the parameters on sparser networks which are approximately cycle-free and not very efficient in achieving the same goal in a dense network. Since the part of the factor graph encoding the information about the covariates is represented by a fully connected network, the performance of the BP in extracting information from that part is not very good.

\subsection{Effect of data integration}
In this experiment, we demonstrate that in relatively sparse networks and low combined signal strength, the Belief Propagation-based estimator $\widehat{\bm \sigma}$ has better performance in terms of empirical overlap compared to existing methods in the literature that use multilayer networks or covariate information in isolation. In other words, when the combined signal strength is above the phase transition threshold mentioned in Theorem \ref{thm:weak_recovery_threshold} but the signals from the individual components, namely, the multilayer network and the covariate matrix $\bm B$ are small, Belief Propagation can utilize the power of data integration to provide an estimate with better overlap with the true $\bm \sigma$. 

To illustrate this, we consider the DC-MASE algorithm from \citet{agterberg2022joint} which constructs a spectral estimate of $\bm \sigma$ from the multilayer networks. Furthermore, we also consider the Approximate Message Passing (AMP) algorithm from \citet{montanari2021} which estimates $\bm \sigma$ using the matrix $\bm B$. 

We fix $n=400, p=500$ and $m=2$. The average degrees of the component networks are fixed at $d_1=3, d_2=2$ and the combined signal strength $t$ is varied across 10 equidistant points in $[0,4.5]$. Further, we vary the ratio $r=\frac{(\lambda^*_1)^2+\ldots+(\lambda^*_m)^2}{(\mu^*)^2/\gamma}$ in $\{0.5,1,2\}$. The individual signals are chosen to be
\[
(\lambda^*_1)^2 = \frac{3r~t}{4(1+r)}, \quad (\lambda^*_2)^2 = \frac{r~t}{4(1+r)}, \quad \mbox{and,} \quad (\mu^*)^2=\frac{\gamma~t}{(1+r)}.
\]
For each value of $r$ and $t$, we estimate $\bm \sigma$ using DC-MASE which only uses the networks $\bm G_1$ and $\bm G_2$, AMP using only $\bm B$ and Belief propagation using $\bm G_1, \bm G_2$ and $\bm B$. The average overlap of the estimated $\widehat{\bm \sigma}$ with the true $\bm \sigma$ is computed for each of the three estimators by averaging over 25 independent replications of the experiments. We visually compare the performance of these estimators in terms of average overlap in Figure \ref{fig:method_comparison}. We observe that the performance of DC-MASE, a state-of-the-art spectral algorithm (to the best of our knowledge) for detecting communities in multilayer networks, is quite subpar compared to the Belief Propagation estimate in sparse networks with low signals considered in this paper. It improves slightly as the contribution of the networks increases in the total signal strength. The Approximate Message Passing algorithm performs better than DC-MASE when the contribution of the $\bm B$ is more than the networks. However, neither method can compete with the Belief Propagation which provides an estimate with greater overlap with the true $\bm \sigma$ by integrating information across different components of the data. 

\begin{figure}[h]
  \centering
  \begin{subfigure}[b]{0.32\textwidth}
    \centering
    \includegraphics[width=\textwidth]{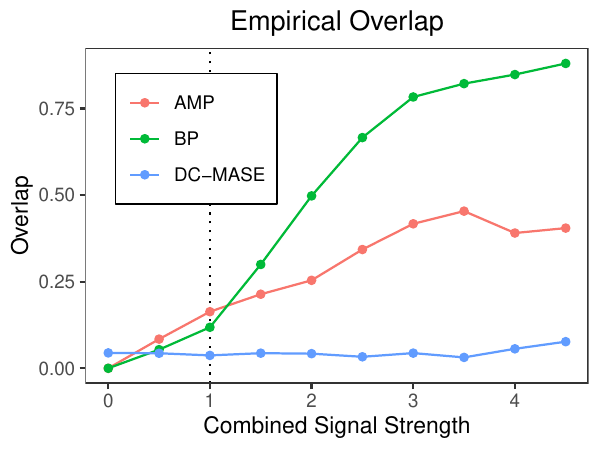}
    \caption{$r=0.5$}
    \label{fig:plot_mt_power}
  \end{subfigure}
  \begin{subfigure}[b]{0.32\textwidth}
    \centering
    \includegraphics[width=\textwidth]{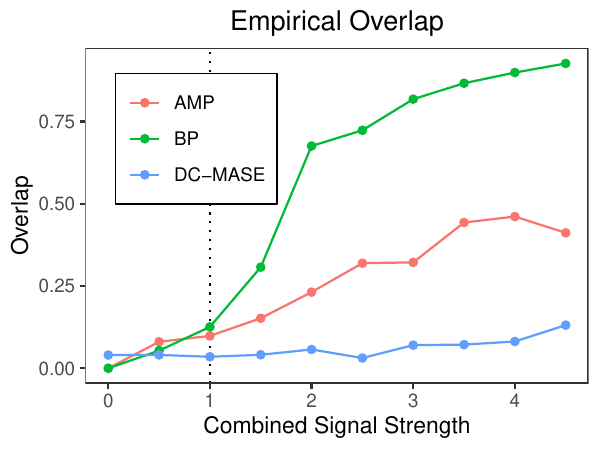}
    \caption{$r=1$}
  \end{subfigure}
  \begin{subfigure}[b]{0.32\textwidth}
    \centering
    \includegraphics[width=\textwidth]{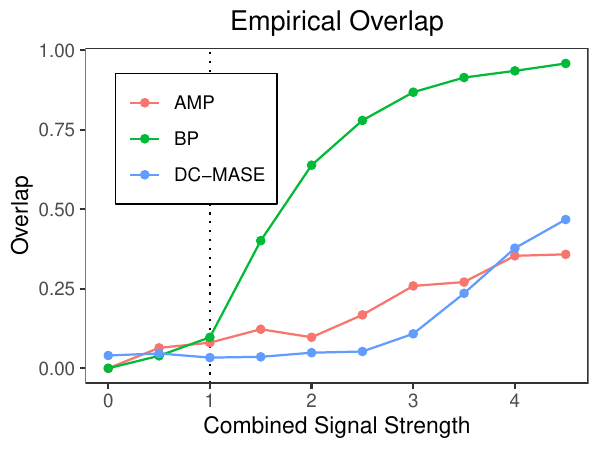}
    \caption{$r=2$}
  \end{subfigure}
  \caption{Empirical average overlap as a function of $(\lambda_1^*)^2+(\lambda_2^*)^2+(\mu^*)^2/\gamma$.}
  \label{fig:method_comparison}
\end{figure}

\section{Conclusion and Limitations}
In this paper, we have found the sharp information-theoretic threshold for testing the presence of a planted partition of the nodes in a multiplex network system where the average node degrees for each of the networks are greater than $1$ but non-diverging. We have further shown that the detection threshold coincides with the threshold for weakly reconstructing the planted partition and have provided a quasi-polynomial algorithm to achieve the weak recovery of the partition. Finally, we have given an approximate version of the Belief Propagation Algorithm to compute the approximate MAP estimate of $\bm \sigma$ and shown that this estimator is also quite good in weakly recovering the true $\bm \sigma$.

The techniques described in this work are limited to the balanced two community networks and are not easily extendable for multiple communities. Various interesting scenarios occur in a multi-community setup, for example, each layer of the network may be informative about a single community, and all the communities are identifiable only when all the layers are observed \cite{mercado18a}. The detection threshold in such problems is an interesting direction for future research.

It is also interesting to analyze the situation when the conditional independence assumption between the layers and the covariates is relaxed. However, extending our methods to
such a setup might not be straightforward and is beyond the scope of our current work.

\section{Acknowledgement}
The authors would like to thank Bhaswar B. Bhattacharya and Subhabrata Sen for their many helpful comments and suggestions.

\appendix

\section{Proof of Theorem \ref{thm:detection_threshold}}\label{appendix:proofofthm1}
\subsection{Proof of the lower bound}
In this section we show that $\mathbb{P}_{\bm{\lambda},\mu}$ is contiguous to $\mathbb{P}_{\bm{0},0}$ when $\frac{\mu^{2}}{\gamma} + \sum_{i=1}^{m}\lambda_{i}^{2}< 1$. Consider $\widetilde{\mathbb{P}}_{\bm{\lambda},\mu}$, the distribution of the contextual SBM with the parameters $\bm\lambda$ and $\mu$ conditioned on $\bm{\sigma}$ and $\bm{u}$. Given $\delta>0$ define,
\begin{align}\label{eq:def-set-S}
    \mathcal{S} = \left\{\bm{u}:\left\|\bm{u}\right\|_{2}\leq (1+\delta)\sqrt{p}\right\}.
\end{align}
Observe that the likelihood ratio between $\mathbb{P}_{\bm{\lambda},\mu}$ and $\mathbb{P}_{0,0}$ can be written as,
\begin{align*}
    L := \dfrac{\mathbb{P}_{\bm{\lambda},\mu}(\mathbb{A}_{m},\bm{B})}{\mathbb{P}_{\bm 0,0}(\mathbb{A}_{m},\bm{B})} = \dfrac{\mathbb{E}_{\bm{\sigma},\bm{u}}\left[\widetilde{\mathbb{P}}_{\bm{\lambda},\mu}\left(\mathbb{A}_{m},\bm{B}\middle|\bm{\sigma},\bm{u}\right)\right]}{\mathbb{P}_{\bm 0,0}(\mathbb{A}_{m},\bm{B})}
\end{align*}
where $\mathbb{A}_{m} = \{\bm{A}_{i}:1\leq i\leq m\}$. Now consider the truncated likelihood ratio
\begin{align*}
    L_{t}:= \dfrac{\mathbb{E}_{\bm{\sigma},\bm{u}}\left[\widetilde{\mathbb{P}}_{\bm{\lambda},\mu}\left(\mathbb{A}_{m},\bm{B}\middle|\bm{\sigma},\bm{u}\right)\bm{1}\{\bm{u}\in\mathcal{S}\}\right]}{\mathbb{P}_{\bm 0,0}(\mathbb{A}_{m},\bm{B})}.
\end{align*}
\begin{lem}\label{lemma:Lt-bdd}
If there exists $C>0$ such that, $\mathbb{E}_{\bm{H}_{0}}L_{t}^{2}\leq C$, then $\mathbb{P}_{\bm{\lambda},\mu}$ is contiguous to $\mathbb{P}_{\bm{0},0}$.
\end{lem}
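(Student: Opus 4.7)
The plan is to prove Lemma~\ref{lemma:Lt-bdd} via the standard truncated second moment strategy. Contiguity of $\mathbb{P}_{\bm{\lambda},\mu}$ with respect to $\mathbb{P}_{\bm{0},0}$ amounts to showing that for any sequence of events $E_n$ measurable with respect to the observables $(\mathbb{A}_m,\bm{B})$ with $\mathbb{P}_{\bm{0},0}(E_n)\to 0$, one also has $\mathbb{P}_{\bm{\lambda},\mu}(E_n)\to 0$. I would split $\mathbb{P}_{\bm{\lambda},\mu}(E_n)=\mathbb{E}_{\bm{0},0}[L\,\bm{1}_{E_n}]$ into a contribution from the truncated ratio $L_t$, controlled by the second-moment hypothesis, plus an error $\mathbb{E}_{\bm{0},0}[(L-L_t)\bm{1}_{E_n}]$ that should vanish because the set $\mathcal{S}$ defined in \eqref{eq:def-set-S} captures essentially all of the prior mass of $\bm{u}$.

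First, note that $L_t\le L$ pointwise because $\bm{1}\{\bm{u}\in\mathcal{S}\}\le 1$. A direct Fubini calculation, using that under the generative model $\bm{u}\sim N(\bm{0},\bm{I}_{p\times p})$ independently of $\bm{\sigma}$, reduces $\mathbb{E}_{\bm{0},0}[L-L_t]$ to
\[
1-\mathbb{P}(\bm{u}\in\mathcal{S})=\mathbb{P}\!\left(\|\bm{u}\|_{2}>(1+\delta)\sqrt{p}\right),
\]
which is $o(1)$ by standard $\chi^2_p$ tail concentration. Hence $L-L_t\to 0$ in $L^1(\mathbb{P}_{\bm{0},0})$, and in particular in probability.

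Next, applying this bound to the error term in the decomposition yields
\[
\mathbb{E}_{\bm{0},0}[(L-L_t)\,\bm{1}_{E_n}]\le \mathbb{E}_{\bm{0},0}[L-L_t]=o(1),
\]
while for the main term I would invoke Cauchy--Schwarz together with the hypothesis $\mathbb{E}_{\bm{0},0}[L_t^2]\le C$:
\[
\mathbb{E}_{\bm{0},0}[L_t\,\bm{1}_{E_n}]\le \sqrt{\mathbb{E}_{\bm{0},0}[L_t^2]}\,\sqrt{\mathbb{P}_{\bm{0},0}(E_n)}\le \sqrt{C}\,\sqrt{\mathbb{P}_{\bm{0},0}(E_n)}\to 0.
\]
Combining these two bounds gives $\mathbb{P}_{\bm{\lambda},\mu}(E_n)\to 0$, which is exactly contiguity.

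The argument itself is routine and presents no obstacle: the truncation set $\mathcal{S}$ is precisely the high-probability Gaussian shell, so the truncation error is trivially small, and the remaining estimate is a one-line Cauchy--Schwarz. The genuine difficulty, and the entire reason for passing from $L$ to $L_t$ in the first place, lies in verifying the hypothesis $\mathbb{E}_{\bm{0},0}[L_t^2]\le C$ throughout the subcritical regime $\sum_k \lambda_k^2+\mu^2/\gamma<1$; without truncation the untruncated second moment $\mathbb{E}_{\bm{0},0}[L^2]$ may blow up because of rare atypically large values of $\|\bm{u}\|_2$ arising in the alternative mixture. That second-moment computation is the substantive content of the contiguity half of Theorem~\ref{thm:detection_threshold} and is carried out separately from Lemma~\ref{lemma:Lt-bdd}.
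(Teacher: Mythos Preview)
Your proposal is correct and follows essentially the same route as the paper: decompose $\mathbb{P}_{\bm{\lambda},\mu}(E_n)=\mathbb{E}_{\bm{0},0}[L_t\bm{1}_{E_n}]+\mathbb{E}_{\bm{0},0}[(L-L_t)\bm{1}_{E_n}]$, bound the second term by $\mathbb{P}(\bm{u}\notin\mathcal{S})=o(1)$ using $L_t\le L$, and control the first term via Cauchy--Schwarz and the hypothesis. Your additional remarks on why the truncation is needed are accurate but not part of the lemma's proof itself.
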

\begin{proof}
Let $\{E_{n}:n\geq 1\}$ be any sequence of events such that $\mathbb{P}_{\bm{0},0}(E_{n})\rightarrow 0$, as $n\rightarrow\infty$. Observe that,
\begin{align}\label{eq:equation-1}
    \mathbb{P}_{\bm{\lambda},\mu}(E_{n}) = \mathbb{E}_{\bm{0},0}\left[L\bm{1}\{E_{n}\}\right] = \mathbb{E}_{\bm{0},0}\left[L_{t}\bm{1}\{E_{n}\}\right] + \mathbb{E}_{\bm{0},0}\left[(L-L_{t})\bm{1}\{E_{n}\}\right].
\end{align}
By definition of $L_t$, it then follows:
\begin{align}\label{eq:equation-2}
    \mathbb{E}_{\bm{0},0}\left[(L-L_{t})\bm{1}\{E_{n}\}\right]\leq \mathbb{E}_{\bm{0},0}\left[L-L_{t}\right]\leq \mathbb{E}_{\bm{\sigma},\bm{u}}\left[\bm{1}\{\bm{u}\not\in\mathcal{S}\}\right]\overset{n\rightarrow\infty}{\longrightarrow}0.
\end{align}
Using Cauchy-Schwarz Inequality along with the bound on $\mathbb{E}_{\bm{H}_{0}}L_{t}^{2}$ we have,
\begin{align}\label{eq:equation-3}
    \mathbb{E}_{\bm{0},0}\left[L_{t}\bm{1}\{E_{n}\}\right]\leq \sqrt{\mathbb{E}_{\bm{0},0}\left[L_{t}^2\right]\mathbb{P}_{\bm{0},0}(E_{n})}\overset{n\rightarrow\infty}{\longrightarrow}0.
\end{align}
Combining \eqref{eq:equation-1},\eqref{eq:equation-2} and \eqref{eq:equation-3} completes the proof.
\end{proof}
\begin{prop}\label{prop:Elt^2-bd}
Under $\bm{H}_{0}$, $\mathbb{E}L_{t}^{2}\leq C_{0}<\infty$ for some universal constant $C_{0}>0$.
\end{prop}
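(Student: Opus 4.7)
The plan is the standard conditional second-moment method. Squaring $L_t$ and taking expectation under $\bm H_0$, then interchanging orders of integration, will give
\[
\E_{\bm 0,0}[L_t^2]=\E_{(\bm\sigma,\bm u),(\bm\sigma',\bm u')}\!\left[\mathbf{1}\{\bm u,\bm u'\in\mathcal S\}\,M(\bm\sigma,\bm u,\bm\sigma',\bm u')\right],
\]
where $(\bm\sigma,\bm u),(\bm\sigma',\bm u')$ are independent copies drawn from the prior and $M$ is the conditional second moment of the joint likelihood ratio. Using conditional independence of the networks and $\bm B$ given $\bm\sigma$, I will factor $M=\prod_{k=1}^m M_k(\bm\sigma,\bm\sigma')\cdot M_B(\bm\sigma,\bm u,\bm\sigma',\bm u')$ and treat each factor separately.

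\textbf{Factor-by-factor evaluation.} For each network $k$, since $\sigma_i\sigma_j\sigma_i'\sigma_j'\in\{\pm 1\}$, summing over the two values of $A_{ij}^{(k)}$ produces the exact identity
\[
M_k=\prod_{i<j}\!\left(1+\frac{\lambda_k^2\rho_i\rho_j}{n-d_k}\right),\qquad\rho_i=\sigma_i\sigma_i'.
\]
The identity $\log(1+c\rho)=\tfrac12\log(1-c^2)+\rho\tanh^{-1}(c)$ for $\rho\in\{\pm 1\}$ rewrites this as $\exp\!\left(\tfrac{n(n-1)}{4}\log(1-c_k^2)+\tanh^{-1}(c_k)\tfrac{(nq)^2-n}{2}\right)$ with $q=\bm\sigma\cdot\bm\sigma'/n$ and $c_k=\lambda_k^2/(n-d_k)=O(1/n)$. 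In the sparse regime where $d_k>1$ is fixed, the first term is $O(1)$ and the second is $\tfrac{\lambda_k^2 nq^2}{2}(1+o(1))$, giving $M_k\le C_k\exp(\lambda_k^2 nq^2/2)$. For the covariate factor, a row-wise completion of the square in $\bm B$ yields exactly $M_B=\exp\!\left(\tfrac{\mu}{n}(\bm\sigma\cdot\bm\sigma')(\bm u\cdot\bm u')\right)=\exp(\mu q\,\bm u\cdot\bm u')$.

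\textbf{Gaussian integration under truncation.} With $\bm u,\bm u'$ iid $\mathrm N(0,\bm I_p)$, dropping the indicator on $\bm u'$ and applying the Gaussian MGF conditional on $\bm u$ gives
\[
\E_{\bm u'}\!\left[\mathbf{1}\{\bm u'\in\mathcal S\}\exp(\mu q\,\bm u\cdot\bm u')\right]\le\exp\!\left(\tfrac{\mu^2q^2\|\bm u\|^2}{2}\right),
\]
and the truncation $\|\bm u\|^2\le(1+\delta)^2 p$ from $\bm u\in\mathcal S$ then caps the outer integral at $\exp(\mu^2 q^2(1+\delta)^2 p/2)$; using $p/n\to 1/\gamma$ this is $\exp\!\left(\tfrac{(1+\delta)^2\mu^2}{2\gamma}nq^2(1+o(1))\right)$. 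Multiplying by the network factors produces
\[
\E_{\bm 0,0}[L_t^2]\le C\,\E_{\bm\sigma,\bm\sigma'}\!\left[\exp\!\left(\tfrac{nq^2\alpha}{2}\right)\right],\qquad\alpha:=\sum_{k=1}^m\lambda_k^2+\tfrac{(1+\delta)^2\mu^2}{\gamma}.
\]

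\textbf{Conclusion and anticipated obstacle.} Since $\sum_k\lambda_k^2+\mu^2/\gamma<1$ in the contiguity regime, $\delta>0$ can be chosen small enough to enforce $\alpha<1$. A Hubbard--Stratonovich linearisation $\exp(nq^2\alpha/2)=\E_Z\exp(Z\sqrt{\alpha/n}\,\bm\sigma\cdot\bm\sigma')$ with $Z\sim\mathrm N(0,1)$ independent of $\bm\sigma,\bm\sigma'$ reduces the remaining expectation to $\E_Z\cosh^n(Z\sqrt{\alpha/n})\le\E_Z\exp(\alpha Z^2/2)=(1-\alpha)^{-1/2}$, yielding the required universal bound. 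The main technical burden will be careful bookkeeping of the $o(1)$ and $O(1)$ corrections --- from the exact SBM log-product in the sparse regime where $d_k$ may be close to $1$, from the mild crudeness of the pointwise truncation bound on $\|\bm u\|^2$, and from the approximation $p/n\to 1/\gamma$ --- and verifying that all of them can be absorbed into the slack between $\sum_k\lambda_k^2+\mu^2/\gamma$ and $1$ by suitably small choice of $\delta$.
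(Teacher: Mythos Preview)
Your proposal is correct and follows the same conditional second-moment strategy as the paper: introduce an independent copy of $(\bm\sigma,\bm u)$, factor the inner expectation into independent SBM and Gaussian pieces, bound each factor, and reduce to controlling $\E_{\bm\sigma,\bm\sigma'}\exp(n q^2\alpha/2)$ for some $\alpha<1$. The paper outsources the SBM factor to Lemma~5.4 of \cite{mnsf} and the remaining Gaussian-plus-Rademacher bound to the proof of Theorem~1 in \cite{lu2020contextual}, whereas you compute everything explicitly; in particular, your treatment of the covariate factor---dropping the indicator on $\bm u'$, integrating it out to obtain $\exp(\mu^2q^2\|\bm u\|^2/2)$, then invoking the truncation on $\|\bm u\|$---is a cleaner route than the paper's, which retains $\exp(\mu q\,\langle\bm u,\bm v\rangle)$ and defers the joint bound. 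The Hubbard--Stratonovich step at the end makes your argument fully self-contained, and your closing remarks correctly identify where the $o(1)$ corrections (from $n/(n-d_k)$ and from $p/n\to 1/\gamma$) must be absorbed into the slack $1-\sum_k\lambda_k^2-\mu^2/\gamma$.
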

\begin{proof}
Observe that, by Fubini's Theorem,
\begin{align}\label{eq:ELt-expression}
    \mathbb{E}_{\bm{0},0}L_{t}^{2} = \mathbb{E}_{(\bm{\sigma},\bm{u}),(\bm{\tau},\bm{v})}\left[\mathbb{E}_{\bm{0},0}\left[\dfrac{\widetilde{\mathbb{P}}_{\bm{\lambda},\mu}\left(\mathbb{A}_{m},\bm{B}\middle|\bm{\sigma},\bm{u}\right)}{\mathbb{P}_{\bm 0,0}(\mathbb{A}_{m},\bm{B})}\dfrac{\widetilde{\mathbb{P}}_{\bm{\lambda},\mu}\left(\mathbb{A}_{m},\bm{B}\middle|\bm{\tau},\bm{v}\right)}{\mathbb{P}_{\bm 0,0}(\mathbb{A}_{m},\bm{B})}\bm{1}\left\{\bm{u},\bm{v}\in\mathcal{S}\right\}\right]\right]
\end{align}
where $\bm{\tau}$ and $\bm{v}$ are i.i.d copies of $\bm{\sigma}$ and $\bm{u}$ respectively. Recall that, by construction, given $\{\bm{\sigma},\bm{u}\}$, $\bm{A}_{i}$'s for $1\leq i\leq m$ are independent and $\bm A_{i}$'s are mutually independent and independent of $\bm{B}$. Then,
\begin{align*}
    \dfrac{\widetilde{\mathbb{P}}_{\bm{\lambda},\mu}\left(\mathbb{A}_{m},\bm{B}\middle|\bm{\sigma},\bm{u}\right)}{\mathbb{P}_{\bm 0,0}(\mathbb{A}_{m},\bm{B})} = \left\{\prod_{i=1}^{m}\dfrac{\widetilde{\mathbb{P}}_{\bm{\lambda},\mu}\left(\bm{A}_{i}\middle|\bm{\sigma}\right)}{\mathbb{P}_{\bm{0},0}(\bm{A}_{i})}\right\}\dfrac{\widetilde{\mathbb{P}}_{\bm{\lambda},\mu}\left(\bm{B}\middle|\bm{\sigma},\bm{u}\right)}{\mathbb{P}_{\bm{0},0}(\bm{B})}.
\end{align*}
Define,
\begin{align*}
    W_{ij}^{(k)}:=W_{ij}^{(k)}\left(\bm{A}_{k},\bm{\sigma}\right)=
    \begin{cases}
    \dfrac{2a_{k}}{a_{k} + b_{k}}\text{ if }\sigma_{i}=\sigma_{j},\ A_{ij}^{(k)} = 1\\
    \dfrac{2b_{k}}{a_{k} + b_{k}}\text{ if }\sigma_{i}\neq \sigma_{j},\ A_{ij}^{(k)} = 1\\
    \dfrac{n-a_{k}}{n-\frac{a_{k}+b_{k}}{2}}\text{ if }\sigma_{i}=\sigma_{j},\ A_{ij}^{(k)} = 0\\
    \dfrac{n-b_{k}}{n-\frac{a_{k}+b_{k}}{2}}\text{ if }\sigma_{i}\neq \sigma_{j},\ A_{ij}^{(k)} = 0
    \end{cases}
\end{align*}
and hence by definition,
\begin{align*}
    \dfrac{\widetilde{\mathbb{P}}_{\bm{\lambda},\mu}\left(\bm{A}_{k}\middle|\bm{\sigma}\right)}{\mathbb{P}_{\bm{0},0}(\bm{A}_{k})} = \prod_{i<j}W_{ij}^{(k)}.
\end{align*}
By definition of $\bm{B}$ from \eqref{eq:def-B},
\begin{align*}
    \dfrac{\widetilde{\mathbb{P}}_{\bm{\lambda},\mu}\left(\bm{B}\middle|\bm{\sigma},\bm{u}\right)}{\mathbb{P}_{\bm{0},0}(\bm{B})} = \exp\left(\sqrt{\dfrac{\mu}{n}}\sum_{i=1}^{n}\sigma_{i}\bm{R}_{i}^{T}\bm{u} - \dfrac{\mu}{2}\|\bm{u}\|_{2}^{2}\right).
\end{align*}
For all $1\leq k\leq m$ consider $V_{ij}^{(k)} = V_{ij}^{(k)}\left(\bm{A}_{k},\bm{\tau}\right), 1\leq i<j\leq n$ to be defined similarly to $W_{ij}, 1\leq i<j\leq n$. Let $V_{ij}^{(k)}$'s and $W_{ij}$'s be independent. By Lemma 5.4 of \cite{mnsf},
\begin{align}\label{eq:WV-expectation}
    \mathbb{E}\left[\prod_{i<j}W_{ij}^{(k)}V_{ij}^{(k)}\middle| \bm{\sigma},\bm{\tau}\right] = \left(1+o(1)\right)\exp\left( - \frac{\lambda_{k}^{2}}{2} - \frac{\lambda_{k}^{4}}{4}\right)\exp\left(\dfrac{\rho^{2}\lambda_{k}^{2}}{2}(d_{k} + n)\right)
\end{align}
where $\rho:=\rho(\bm{\sigma},\bm{\tau}) = \frac{1}{n}\langle\bm{\sigma},\bm{\tau}\rangle.$ Using MGF of multivariate Gaussian distribution,
\begin{align}\label{eq:prod-B-expectation}
    \mathbb{E}_{\bm{0},0}\left[\exp\left(\sqrt{\frac{\mu}{n}}\sum_{i=1}^{n}\bm{R}_{i}^{T}\left(\sigma_{i}\bm{u} + \tau_{i}\bm{v}\right) - \frac{\mu}{2}\left(\|\bm{u}\|_{2}^{2} + \|\bm{v}\|_{2}^{2}\right)\right)\middle|\bm{u},\bm{v};\bm{\sigma},\bm{\tau}\right] = \exp\left(\mu\langle\bm{u},\bm{v}\rangle\rho\right).
\end{align}
Observe that under $\bm{H}_{0}$, $\mathbb{A}_{m}$ and $\bm{B}$ are independent and hence
\begin{align*}
    \mathbb{E}_{\bm{0},0}
    &\left[\dfrac{\widetilde{\mathbb{P}}_{\bm{\lambda},\mu}\left(\mathbb{A}_{m},\bm{B}\middle|\bm{\sigma},\bm{u}\right)}{\mathbb{P}_{\bm 0,0}(\mathbb{A}_{m},\bm{B})}\dfrac{\widetilde{\mathbb{P}}_{\bm{\lambda},\mu}\left(\mathbb{A}_{m},\bm{B}\middle|\bm{\tau},\bm{v}\right)}{\mathbb{P}_{\bm 0,0}(\mathbb{A}_{m},\bm{B})}\bm{1}\left\{\bm{u},\bm{v}\in\mathcal{S}\right\}\middle|\bm{u},\bm{v};\bm{\sigma},\bm{\tau}\right]\\
    = & \bm{1}\left\{\bm{u},\bm{v}\in\mathcal{S}\right\}\mathbb{E}_{\bm{0},0}\left[\prod_{k=1}^{m}\dfrac{\widetilde{\mathbb{P}}_{\bm{\lambda},\mu}\left(\bm{A}_{k}\middle|\bm{\sigma}\right)}{\mathbb{P}_{\bm{0},0}(\bm{A}_{k})}\dfrac{\widetilde{\mathbb{P}}_{\bm{\lambda},\mu}\left(\bm{A}_{k}\middle|\bm{\tau}\right)}{\mathbb{P}_{\bm{0},0}(\bm{A}_{k})}\middle|\bm{u},\bm{v};\bm{\sigma},\bm{\tau}\right]\\
    &\mathbb{E}_{\bm{0},0}\left[\dfrac{\widetilde{\mathbb{P}}_{\bm{\lambda},\mu}\left(\bm{B}\middle|\bm{\sigma},\bm{u}\right)}{\mathbb{P}_{\bm{0},0}(\bm{B})}\dfrac{\widetilde{\mathbb{P}}_{\bm{\lambda},\mu}\left(\bm{B}\middle|\bm{\tau},\bm{v}\right)}{\mathbb{P}_{\bm{0},0}(\bm{B})}\middle|\bm{u},\bm{v};\bm{\sigma},\bm{\tau}\right]\\
    = & \bm{1}\left\{\bm{u},\bm{v}\in\mathcal{S}\right\}\prod_{k=1}^{m}\mathbb{E}_{\bm{0},0}\left[\prod_{i<j}W_{ij}^{(k)}V_{ij}^{(k)}\middle| \bm{\sigma},\bm{\tau}\right]\\
    &\mathbb{E}_{\bm{0},0}\left[\exp\left(\sqrt{\frac{\mu}{n}}\sum_{i=1}^{n}\bm{R}_{i}^{T}\left(\sigma_{i}\bm{u} + \tau_{i}\bm{v}\right) - \frac{\mu}{2}\left(\|\bm{u}\|_{2}^{2} + \|\bm{v}\|_{2}^{2}\right)\right)\middle|\bm{u},\bm{v};\bm{\sigma},\bm{\tau}\right]\\
    \leq & (1+o(1))\bm{1}\left\{\bm{u},\bm{v}\in\mathcal{S}\right\}\exp\left(-\sum_{k=1}^{m}\frac{\lambda_{k}^{2}}{2} - \sum_{k=1}^{m}\frac{\lambda_{k}^{4}}{4} + \sum_{k=1}^{m}\frac{\lambda_{k}^{2}d_{k}^{2}}{2}\right)\\
    &\exp\left(n\left(\frac{\rho^{2}\sum_{k=1}^{m}\lambda_{k}^{2}}{2}+\frac{\mu}{\gamma}\rho\frac{\langle\bm{u},\bm{v}\rangle}{p}\right)\right)
\end{align*}
where the last inequality follows by \eqref{eq:prod-B-expectation} and \eqref{eq:WV-expectation} and noting that $\rho\leq 1$. Plugging the above bound into \eqref{eq:ELt-expression}, we obtain,
\begin{align}\label{eq:ELt^2-bd}
    \mathbb{E}_{\bm{0},0}L_{t}^{2}\leq
    &(1+o(1))\exp\left(-\sum_{k=1}^{m}\frac{\lambda_{k}^{2}}{2} - \sum_{k=1}^{m}\frac{\lambda_{k}^{4}}{4} + \sum_{k=1}^{m}\frac{\lambda_{k}^{2}d_{k}^{2}}{2}\right)\nonumber\\
    &\mathbb{E}_{(\bm{\sigma},\bm{u}),(\bm{\tau},\bm{v})}\left[\exp\left(n\left(\frac{\rho^{2}\sum_{k=1}^{m}\lambda_{k}^{2}}{2}+\frac{\mu}{\gamma}\rho\frac{\langle\bm{u},\bm{v}\rangle}{p}\right)\right)\bm{1}\left\{\bm{u},\bm{v}\in\mathcal{S}\right\}\right]
\end{align}
Recall that $\frac{\mu^{2}}{\gamma} + \sum_{i=1}^{m}\lambda_{i}^{2}<1$, then we can choose $\delta>0$, defined in \eqref{eq:def-set-S}, small enough such that $\frac{\mu^{2}}{\gamma}(1+\delta)^2 + \sum_{i=1}^{m}\lambda_{i}^{2}<1$. Choosing such $\delta>0$ and following the proof of \cite[Theorem 1]{lu2020contextual} we have,
\begin{align}\label{eq:lusen-exp-bd}
    \mathbb{E}_{(\bm{\sigma},\bm{u}),(\bm{\tau},\bm{v})}\left[\exp\left(n\left(\frac{\rho^{2}\sum_{k=1}^{m}\lambda_{k}^{2}}{2}+\frac{\mu}{\gamma}\rho\frac{\langle\bm{u},\bm{v}\rangle}{p}\right)\right)\bm{1}\left\{\bm{u},\bm{v}\in\mathcal{S}\right\}\right]< C_{1}
\end{align}
for some constant $C_{1}>0$. The proof is completed by substituting the bound from \eqref{eq:lusen-exp-bd} into \eqref{eq:ELt^2-bd}. 
\end{proof}
Finally observe that applying Lemma \ref{lemma:Lt-bdd} along with Proposition \ref{prop:Elt^2-bd} shows that $\mathbb{P}_{\bm{\lambda},\mu}$ is contiguous to $\mathbb{P}_{\bm{0},0}$.\qed
\subsection{Proof of the upper bound}
In this section, for $\frac{\mu^{2}}{\gamma} + \sum_{i=1}^{m}\lambda_{i}^{2}>1$, we devise a consistent test using the cycle statistics $Y_{n,k_{1},\cdots,k_{m},\ell}$. This implies the asymptotic singularity of $\mathbb{P}_{\bm{\lambda},\mu}$ and $\mathbb{P}_{\bm{0},0}$. Recall that $k = \sum_{j=1}^{m}k_{j}+\ell$. By Theorem \ref{thm:cycle-stat-asymp-dist} for $k=O(\log^{1/4}n)$, under $\bm{H}_{0}$, 
\begin{align*}
    \dfrac{Y_{n,k_{1},\cdots,k_{m},\ell}}{\sigma_{k_{1},\cdots,k_{m},\ell}}\overset{d}{\rightarrow}\mathrm{N}(0,1),
\end{align*}
where,
\begin{align*}
    \sigma^2_{k_{1},\cdots,k_{m},\ell} = \dfrac{1}{2k\gamma^{\ell}}\;\dfrac{k!}{\ell!k_1!\,k_2!\cdots k_m!}\prod\limits_{j=1}^{m}d_{j}^{k_j}.
\end{align*}
and under $\bm{H}_{1}$,
\begin{align*}
    \dfrac{Y_{n,k_{1},\cdots,k_{m},\ell}}{\sigma_{k_{1},\cdots,k_{m},\ell}} - \widetilde{\mu}_{k_{1},\cdots,k_{m},\ell}\overset{d}{\rightarrow}\mathrm{N}(0,1)
\end{align*}
where,
\begin{align*}
    \widetilde{\mu}_{k_{1},\cdots,k_{m},\ell} = \sqrt{\dfrac{1}{2k}\dfrac{k!}{\ell!\prod_{j=1}^{m}k_{j}!}\prod_{j=1}^{m}(\lambda_{j}^{2})^{k_{j}}\left(\dfrac{\mu^{2}}{\gamma}\right)^{\ell}}
\end{align*}
Now choose,
\begin{align*}
     k_{j} = \left\lfloor \dfrac{\lambda_{j}^{2}k}{\mu^{2}/\gamma + \sum_{j=1}^{m}\lambda_{j}^{2}}\right\rfloor\text{ for all }1\leq j\leq m,\text{ and }\ell = k - \sum_{j=1}^{m}k_{j}.
\end{align*}
Then using Stirling's approximation,
\begin{align*}
    \widetilde{\mu}_{k_{1},\cdots,k_{m},\ell}^{2} 
    &\approx \dfrac{C_{1}}{2k}\sqrt{\frac{k}{\ell\prod_{j=1}^{m}k_{j}}}\left(\dfrac{k}{\ell}\right)^{\ell}\prod_{j=1}^{m}\left(\dfrac{k}{k_{j}}\right)^{k_{j}}\prod_{j=1}^{m}(\lambda_{j}^{2})^{k_{j}}\left(\dfrac{\mu^{2}}{\gamma}\right)^{\ell}\\
    &\geq C_{2}\sqrt{\dfrac{1}{k\ell\prod_{j=1}^{m}k_{j}}}\left(\dfrac{\mu^{2}}{\gamma} + \sum_{j=1}^{m}\lambda_{j}^{2}\right)^{k}\\
    &\geq C_{2}\dfrac{1}{k^{\frac{m+1}{2}}}\left(\dfrac{\mu^{2}}{\gamma} + \sum_{j=1}^{m}\lambda_{j}^{2}\right)^{k}\overset{k\rightarrow\infty}{\longrightarrow}\infty
\end{align*}
where $C_{1},C_{2}>0$ are universal constants. Thus for $k$ slowly growing in $n$ such that, $k=O(\log^{1/4}n)$ we get a sequence of consistent tests, completing the proof.\qed

\section{Proof of Theorem \ref{thm:cycle-stat-asymp-dist}}\label{appendix:proofofthm2}
Fix $r>0$. Consider $\{(k_{j_{1}},\cdots, k_{j_{m}},\ell_{j}):1\leq j\leq r\}$ such that $k_{j} = \sum_{p=1}^{m}k_{j_{p}} + \ell_{j}$ for all $1\leq j\leq r$ and $m_{1},\cdots, m_{r}\geq 1$. Without loss of generality suppose there exists $r_{1}\leq r$ such that $\ell_{p} = 0, 1\leq p\leq r_{1}$ and $\ell_{p}>0$ for $p>r_{1}$. Further suppose that $k_{1}<\cdots<k_{r_{1}}$ and $k_{r_{1}+1}<\cdots<k_{r}$. 
We shall show that,
\begin{align*}
    \mathbb{E}_{\bm 0,0}\left[\prod\limits_{j=1}^{r}Y^{m_j}_{n,k_{j_{1}},\cdots,k_{j_{m}},l_{j}}\right]
     \rightarrow \prod\limits_{j=1}^{r_1}\mathbb{E}\left[\nu^{m_j}_{(k_{j_1},\cdots,k_{j_m})}\right]\prod\limits_{j=r_1+1}^{r}\mathbb{E}\left[Z^{m_j}_{(k_1,\cdots,k_m,\ell)}\right],
\end{align*}
where $\nu^{m_j}_{(k_{j_1},\cdots,k_{j_m})}$ are Poisson$(\lambda_{k_{j_1},\cdots,k_{j_m}})$ and $Z_{(k_1,\cdots,k_m,\ell)}$ are $N(0,\sigma^2_{k_1,\cdots,k_m,\ell})$. Similarly, we shall also show
\begin{align*}
    \mathbb{E}_{\bm \lambda,\mu}\left[\prod\limits_{j=1}^{r}Y^{m_j}_{n,k_{j_{1}},\cdots,k_{j_{m}},l_{j}}\right]
     \rightarrow \prod\limits_{j=1}^{r_1}\mathbb{E}\left[\nu^{m_j}_{(k_{j_1},\cdots,k_{j_m})}\right]\prod\limits_{j=r_1+1}^{r}\mathbb{E}\left[\widetilde{Z}^{m_j}_{(k_1,\cdots,k_m,\ell)}\right],
\end{align*}
where $\widetilde{Z}_{(k_1,\cdots,k_m,\ell)}$ are $N(\mu_{k_1,\cdots,k_m,\ell},\sigma^2_{k_1,\cdots,k_m,\ell})$. To show that we need the following lemma.
\begin{lem}
\label{eq:asymp_ind}
As $n \rightarrow \infty$ we have the following,
\begin{enumerate}
    \item \hspace{-0.04in}$\left|\mathbb{E}_{\bm 0,0}\left[\prod\limits_{j=1}^{r}Y^{m_j}_{n,k_{j_{1}},\cdots,k_{j_{m}},l_{j}}\right]\hspace{-0.04in}-\mathbb{E}_{\bm 0,0}\left[\prod\limits_{j=1}^{r_1}Y^{m_j}_{n,k_{j_{1}},\cdots,k_{j_{m}},0}\right]\hspace{-0.01in}\mathbb{E}_{\bm 0,0}\left[\prod\limits_{j=r_1+1}^{r}Y^{m_j}_{n,k_{j_{1}},\cdots,k_{j_{m}},l_{j}}\right]\right|\rightarrow 0$,
    \item \hspace{-0.04in}$\left|\mathbb{E}_{\bm \lambda,\mu}\left[\prod\limits_{j=1}^{r}Y^{m_j}_{n,k_{j_{1}},\cdots,k_{j_{m}},l_{j}}\right]\hspace{-0.04in}-\mathbb{E}_{\bm \lambda,\mu}\left[\prod\limits_{j=1}^{r_1}Y^{m_j}_{n,k_{j_{1}},\cdots,k_{j_{m}},0}\right]\hspace{-0.01in}\mathbb{E}_{\bm \lambda,\mu}\left[\prod\limits_{j=r_1+1}^{r}Y^{m_j}_{n,k_{j_{1}},\cdots,k_{j_{m}},l_{j}}\right]\right|\rightarrow 0$.
\end{enumerate}
\end{lem}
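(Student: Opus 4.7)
The plan is to establish both parts by the method of moments, exploiting the independence (under $\bm H_0$) or conditional independence given $(\bm\sigma,\bm u)$ (under $\bm H_1$) between the network adjacencies $\{\bm A_k\}$ and the covariate matrix $\bm B$. First I expand each $Y^{m_j}_{n, k_{j_1}, \ldots, k_{j_m}, \ell_j}$ into a sum over $m_j$-tuples of cycles of the prescribed type on the factor graph, weighted by the corresponding edge products. This rewrites the joint moment $\mathbb{E}[\prod_j Y^{m_j}_{\cdots}]$ as a single sum over configurations $\mathcal{C} = (\omega_{j,q})_{j \le r,\, q \le m_j}$, to which the expectation is applied cycle by cycle.

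Next I would partition these configurations into factorizable and non-factorizable ones, where \emph{factorizable} means the cycles in the block of indices $j \le r_1$ (all with $\ell_j = 0$) share no variable nodes and no factor nodes with the cycles in the block $j > r_1$ (all with $\ell_j > 0$). For a factorizable configuration, the underlying graph edges and covariate entries involved in the two blocks are disjoint: under $\mathbb{P}_{\bm 0,0}$ the corresponding entries are i.i.d., so the expectation splits cleanly across blocks; under $\mathbb{P}_{\bm\lambda,\mu}$, conditioning on $(\bm\sigma,\bm u)$ yields conditional independence across blocks, because each block's contribution depends only on the $\sigma_v$'s at its own variable nodes (and, for the second block, additionally on $\bm u$), and marginalizing over the product measure on $\bm\sigma$ preserves the factorization. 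Summing over factorizable configurations therefore reproduces exactly $\mathbb{E}[\prod_{j\le r_1} Y^{m_j}_{\cdots,0}] \cdot \mathbb{E}[\prod_{j>r_1} Y^{m_j}_{\cdots,\ell_j}]$ up to a lower-order correction.

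The remaining task is to bound the contribution from non-factorizable configurations, in which at least one variable node or factor node is shared between the two blocks. A graph wedge contributes a factor of order $1/n$ in expectation, since $\mathbb{E}[A^{(k)}_{ij}] = d_k/n$ and its variance is of the same order. Covariate wedges have mean zero, but after Wick pairing, each paired pair of $B$-wedges yields an $O(1/n)$ factor once combined with the $n^{-\ell_j}$ normalization already present in the statistic. Heuristically, a configuration with $v$ distinct variable nodes admits $O(n^v)$ index choices, and in the leading-order surviving configurations the number of wedges exactly cancels $v$. Any extra sharing of a variable node between blocks strictly reduces $v$ without removing any factor of $1/n$ from the expectation, so each such shared vertex costs an overall factor of $n^{-1}$. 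An enumeration in the spirit of \cite{mnsf}, extended to encompass the Wick pairings for $\bm B$, then yields the required $o(1)$ bound.

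The main obstacle is this last step: controlling the non-factorizable contributions when Wick pairings of $\bm B$-entries algebraically couple block 1 cycles to block 2 cycles. Although block 1 cycles contain no $B$-entries themselves, a shared variable node can appear simultaneously in a block 1 graph wedge and a block 2 covariate wedge, and the Wick pairings needed to make the $\bm B$-expectation nonzero can force further index identifications between the two blocks. The delicate point is to verify that every such identification strictly reduces the combinatorial freedom by a factor of $n$, so the vertex-count deficit introduced by a non-factorizable overlap is never recovered. Executing this bookkeeping uniformly over $k = O(\log^{1/4} n)$ and under both hypotheses is the technical heart of the argument.
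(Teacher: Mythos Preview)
Your proposal is correct and follows essentially the same approach as the paper: expand the joint moment as a sum over cycle tuples, and use the vertex-counting argument (an overlap on $s$ edges forces at least $s+1$ shared vertices, costing a net factor of $n^{-1}$) to show that configurations in which cycles from the $\ell=0$ block and the $\ell>0$ block overlap are negligible. The paper's proof is organized slightly differently---it writes both the joint moment and the product of moments as sums over tuples and compares them termwise, and under $\bm H_1$ it conditions only on $\bm\sigma$ rather than on $(\bm\sigma,\bm u)$---but the substance is identical, and your concern about Wick pairings forcing extra cross-block identifications is not an issue since block~1 cycles carry no $\bm B$-edges.
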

The proof of Lemma \ref{eq:asymp_ind} is omitted here and given in section \ref{eq:proof_of_decouple_lemma}. By the decoupling shown in Lemma \ref{eq:asymp_ind}, under both $\bm{H}_{0}$ and $\bm{H}_{1}$, it is enough to analyze the terms with $\ell=0$ and $\ell>0$ separately. 
\subsection{Proof of Theorem \ref{thm:cycle-stat-asymp-dist}(1)} 
Let us first consider the $\ell=0$ case. \revsag{Observe that for a Poisson random variable $Y \sim \mbox{Poi}(\lambda)$, the factorial moments satisfy
\[
\mathbb E[Y(Y-1)\cdots(Y-M+1)] = \lambda^m.
\]
Since all the cycle statistics and the Poisson random variable have finite moment generating functions, to show the Poisson convergence in Theorem \ref{thm:cycle-stat-asymp-dist}~(1), it is enough to show,
\begin{align}\label{eq:H0-Poisson-Convg}
    \mathbb{E}_{\bm{0},0}\left[\left(Y_{n,k_{1},\cdots,k_{m},0}\right)_{[M]}\right] \overset{n\rightarrow\infty}{\longrightarrow}\left(\frac{1}{2k}\dfrac{k!}{\prod_{j=1}^{m}k_{j}!}\prod_{j=1}^{m}d_{j}^{k_{j}}\right)^{M}, \text{ for all }M\geq 1
\end{align}
where $(T)_{[M]} = T(T-1)\cdots(T-M+1)$ for any random variable $T$. This technique of using method of moments to prove Poisson convergence has been used before in \cite{mnsf,lu2020contextual}.}

Let $Y_{C}$ be the indicator that $C$ is a cycle in the factor graph having $k_{j}$ many type $\bm{A}_{j}$ wedges. Consider,
\begin{align}
    \mathcal{C}_{k_{1},\cdots,k_{m},0} = \{C:C \text{ is a cycle in the factor graph having $k_{j}$ many type $\bm{A}_{j}$ wedges}\}.
\end{align}
Then it is easy to see that $Y_{n,k_{1},\cdots,k_{m},0}$ can be rewritten as,
\begin{align}\label{eq:rewrite-Y-1}
    Y_{n,k_{1},\cdots,k_{m},0} = \sum_{C\in \mathcal{C}_{k_{1},\cdots,k_{m},0}}Y_{C}.
\end{align}
By a simple counting argument,
\begin{align}\label{eq:count-M-tuple-factor-graph}
    \left|\mathcal{C}_{k_{1},\cdots,k_{m},0}\right| = {n\choose k}\dfrac{(k-1)!}{2}\dfrac{k!}{\prod_{j=1}^{m}k_{j}!}.
\end{align}
For any $C\in\mathcal{C}_{k_{1},\cdots,k_{m},0}$ by definition, $\mathbb{E}_{\bm{0},0}Y_{C} = \prod_{j=1}^{m}(d_{j}/n)^{k_{j}}$ and hence,
\begin{align*}
    \mathbb{E}_{\bm{0},0}Y_{n,k_{1},\cdots,k_{m},0} = {n\choose k}\dfrac{(k-1)!}{2}\dfrac{k!}{\prod_{j=1}^{m}k_{j}!}\prod_{j=1}^{m}\left(\dfrac{d_{j}}{n}\right)^{k_{j}}.
\end{align*}
It is well known that,
\begin{align*}
    \dfrac{n(n-1)\cdots(n-k+1)}{n^{k}}\rightarrow 1\text{ whenever }k = o(\sqrt{n}).
\end{align*}
Taking $n\rightarrow\infty$ we get,
\begin{align}\label{eq:H0-Poisson-M1}
    \mathbb{E}_{\bm{0},0}Y_{n,k_{1},\cdots,k_{m},0} = (1+o(1))\frac{1}{2k}\dfrac{k!}{\prod_{j=1}^{m}k_{j}!}\prod_{j=1}^{m}d_{j}^{k_{j}}\text{ whenever }k=o(\sqrt{n}).
\end{align}
Then \eqref{eq:H0-Poisson-M1} shows \eqref{eq:H0-Poisson-Convg} for $M=1$. Define,
\begin{align*}
    \mu_{\text{Poi},\bm{H}_{0}} :=  \frac{1}{2k}\dfrac{k!}{\prod_{j=1}^{m}k_{j}!}\prod_{j=1}^{m}d_{j}^{k_{j}}.
\end{align*}
We now show that 
\begin{align*}
    \mathbb{E}_{\bm{0},0}\left(Y_{n,k_{1},\cdots,k_{m},0}\right)_{[M]}\rightarrow \mu_{\text{Poi},\bm{H}_{0}}^{M}.
\end{align*}
By definition, one can see that $\left(Y_{n,k_{1},\cdots,k_{m},0}\right)_{[M]}$ counts the number of $M$ tuples of cycles $(C_{1},C_{2},\cdots,C_{M})$, where all $C_{i}\in \mathcal{C}_{k_{1},\cdots,k_{m},0}$ are distinct for all $1\leq i\leq M$. First, suppose $(C_{1},C_{2},\cdots,C_{M})$ are all vertex disjoint and hence $Y_{C_{i}}$ are independent for $1\leq i\leq M$, then,
\begin{align}\label{eq:vertex-disjoint-M-prob}
    \mathbb{P}_{\bm{0},0}\left(C_{i}\in G_{F}, 1\leq i\leq M\right) = \mathbb{E}_{\bm{0},0}\prod_{i=1}^{M}Y_{C_{i}} = \prod_{i=1}^{M}\mathbb{E}Y_{C_{i}} = \left(\prod_{j=1}^{m}(d_{j}/n)^{k_{j}}\right)^{M} = n^{-kM}\left(\prod_{j=1}^{m}d_{j}^{k_{j}}\right)^{M}
\end{align}
where $G_{F}$ is the factor graph. Observe that number of ways to choose such vertex disjoint cycles is given by,
\begin{align}\label{eq:count-vertex-disjoint}
    {n\choose kM}\prod_{i=0}^{M-1}\left[{k(M-i)\choose k}\dfrac{(k-1)!}{2}\dfrac{k!}{\prod_{j=1}^{m}k_{j}!}\right] = \dfrac{n!}{(n-kM)!}\dfrac{1}{(2k)^{M}}\left(\dfrac{k!}{\prod_{j=1}^{m}k_{j}!}\right)^{M}
\end{align}
Then as long as $k = o(\sqrt{n})$, the contribution of vertex disjoint cycles in  $\mathbb{E}_{\bm{0},0}\left(Y_{n,k_{1},\cdots,k_{m},0}\right)_{[M]}$ as $n\rightarrow\infty$ is given by,
\begin{align*}
    \left(\frac{1}{2k}\dfrac{k!}{\prod_{j=1}^{m}k_{j}!}\prod_{j=1}^{m}d_{j}^{k_{j}}\right)^{M} = \mu_{\text{Poi},\bm{H}_{0}}^{M}
\end{align*}
Finally, it remains to show that the contribution of $M$ tuples of cycles $(C_{1},\cdots, C_{M})$ such that at least one pair is not vertex disjoint is asymptotically negligible. Consider $\mathcal{C}_{k_{1},\cdots,k_{m},0}^{(M,2)}$ to be the collection of such $M$ tuples. Then the contribution of $\mathcal{C}_{k_{1},\cdots,k_{m},0}^{(M,2)}$ in $\left(Y_{n,k_{1},\cdots,k_{m},0}\right)_{[M]}$ is given by,
\begin{align*}
    \left(Y_{n,k_{1},\cdots,k_{m},0}\right)_{[M]}^{(2)}:= \sum_{(C_{1},\cdots,C_{M})\in \mathcal{C}_{k_{1},\cdots,k_{m},0}^{(M,2)}}\prod_{i=1}^{M}Y_{C_{i}}
\end{align*}
Now, observe that $\left(Y_{n,k_{1},\cdots,k_{m},0}\right)_{[M]}^{(2)}$ is stochastically dominated by the same random variable for a Erd\H{o}s-R\'enyi random graph with connection probabilities $\max_{j=1}^{m}d_{j}$. Then, by \cite{bollobas_2001} Chapter $4$,
\begin{align}\label{eq:stochastic-dominance}
    \mathbb{E}_{\bm{0},0}\left(Y_{n,k_{1},\cdots,k_{m},0}\right)_{[M]}^{(2)} = o(1)\text{ whenever }d=O(\log^{1/4}n)
\end{align}
which completes the proof for \eqref{eq:H0-Poisson-Convg}.\\

Next, consider $0<\ell<k = \sum_{j=1}^{m}k_{j}+\ell$. Observe that under $\bm{H}_{0}$, $\bm{A}_{s}'$s are independent of $\bm{B}$. Hence,
\begin{align*}
    \mathbb{E}_{\bm{0},0}\left[Y_{n,k_{1},\cdots,k_{m},\ell}\right] = 0.
\end{align*}
Moving on to the variance calculations we observe that,
\begin{align}
    \mathbb{E}_{\bm{0},0}
    &\left[Y_{n,k_{1},\cdots,k_{m},\ell}^2\right]\nonumber\\ =&\frac{1}{n^{2\ell}}\sum_{\omega_{1:2}}\mathbb{E}_{\bm{0},0}\left[\left(\prod_{j=1}^{m}\prod_{e_{j}\in E_{\omega_{1},1_{j}}}A_{e_{j}}^{(j)}\prod_{e_{\ell}\in E_{\omega_{1},2}}B_{e_{\ell}}\right)\left(\prod_{j=1}^{m}\prod_{e_{j}\in E_{\omega_{2},1_{j}}}A_{e_{j}}^{(j)}\prod_{e_{\ell}\in E_{\omega_{2},2}}B_{e_{\ell}}\right)\right]\label{eq:exp-Y-sq}
\end{align}
where $\omega_{1:2}$ is a collection of cycles $\omega_{1},\omega_{2}$ having $k_{r}$ type $E_{1_{r}}$ wedges for $1\leq r\leq m$ and $\ell$ type $E_{2}$ wedges. We decompose \eqref{eq:exp-Y-sq} as follows,
\begin{align*}
    \mathbb{E}_{\bm{0},0}
    &\left[Y_{n,k_{1},\cdots,k_{m},\ell}^2\right]
    = T_{1} + T_{2}
\end{align*}
where
\begin{align*}
    T_{1} = \frac{1}{n^{2\ell}}\sum_{\omega}\mathbb{E}_{\bm{0},0}\left[\prod_{j=1}^{m}\prod_{e_{j}\in E_{\omega,1_{j}}}A_{e_{j}}^{(j)}\prod_{e_{\ell}\in E_{\omega,2}}B_{e_{\ell}}^2\right]
\end{align*}
and
\begin{align}\label{eq:def-T2}
    T_{2} = \frac{1}{n^{2\ell}}\sum_{\omega_{1}\neq \omega_{2}}\mathbb{E}_{\bm{0},0}\left[\left(\prod_{j=1}^{m}\prod_{e_{j}\in E_{\omega_{1},1_{j}}}A_{e_{j}}^{(j)}\prod_{e_{\ell}\in E_{\omega_{1},2}}B_{e_{\ell}}\right)\left(\prod_{j=1}^{m}\prod_{e_{j}\in E_{\omega_{2},1_{j}}}A_{e_{j}}^{(j)}\prod_{e_{\ell}\in E_{\omega_{2},2}}B_{e_{\ell}}\right)\right]
\end{align}
Now fix a cycle $\omega$, then by definition of B-type edges,
\begin{align*}
    \mathbb{E}_{\bm{0},0}\left[\prod_{j=1}^{m}\prod_{e_{j}\in E_{\omega,1_{j}}}A_{e_{j}}^{(j)}\prod_{e_{\ell}\in E_{\omega,2}}B_{e_{\ell}}^2\right]
    & = \mathbb{E}_{\bm{0},0}\left[\prod_{j=1}^{m}\prod_{e_{j}\in E_{\omega,1_{j}}}A_{e_{j}}^{(j)}\right] = \prod_{j=1}^{m}\mathbb{E}_{\bm{0},0}\left[\prod_{e_{j}\in E_{\omega,1_{j}}}A_{e_{j}}^{(j)}\right]\\
    & = \prod_{j=1}^{m}\left(\dfrac{d_{j}}{n}\right)^{k_{j}}
\end{align*}
implying that,
\begin{align*}
    T_{1} = \frac{1}{n^{2\ell}}\sum_{\omega}\prod_{j=1}^{m}\left(\dfrac{d_{j}}{n}\right)^{k_{j}}
\end{align*}
To complete the expression of $T_{1}$ we need to compute number of cycles $\omega$ having $k_{r}$ many type $E_{1_{r}}$ wedges for $1\leq r\leq m$ and $\ell$ many type $E_{2}$ wedges. Recall $k = \ell + \sum_{j=1}^{m}k_{j}$ then the number of such cycles can be easily computed to be,
\begin{align}\label{eq:count-cycle}
    \frac{1}{2}{n\choose k}(k-1)!{k\choose \ell}{k-\ell \choose k_{1}}{k-\ell-k_{1}\choose k_{2}}\cdots p^{\ell}. 
\end{align}
Then,
\begin{align}\label{eq:T1_order_1}
    T_{1} 
    & = \frac{1}{n^{2\ell}}\prod_{j=1}^{m}\left(\dfrac{d_{j}}{n}\right)^{k_{j}}\frac{1}{2}{n\choose k}(k-1)!{k\choose \ell}{k-\ell \choose k_{1}}{k-\ell-k_{1}\choose k_{2}}\cdots p^{\ell}\\
    & = \frac{1}{2k}\prod_{j=1}^{m}d_{j}^{k_{j}}\frac{1}{\gamma^{\ell}}\frac{k!}{\ell!\prod_{j=1}^{m}k_{j}!}(1+o(1)).
\end{align}
as long as $k=o(\sqrt{n})$. Now recalling the definition of $T_{2}$ from \eqref{eq:def-T2} it is easy to see that the expectation would be $0$ unless $\omega_{1}\neq \omega_{2}$ have exactly the same $B$ wedges. Using independence under $\bm{H}_{0}$ along with the above observation,
\begin{align*}
    T_{2} 
    & = \frac{1}{n^{2\ell}}\sum_{\substack{\omega_{1}\neq \omega_{2}\\E_{\omega_{1},2} = E_{\omega_{2},2}}}\mathbb{E}_{\bm{0},0}\left[\left(\prod_{j=1}^{m}\prod_{e_{j}\in E_{\omega_{1},1_{j}}}A_{e_{j}}^{(j)}\prod_{e_{\ell}\in E_{\omega_{1},2}}B_{e_{\ell}}\right)\left(\prod_{j=1}^{m}\prod_{e_{j}\in E_{\omega_{2},1_{j}}}A_{e_{j}}^{(j)}\prod_{e_{\ell}\in E_{\omega_{2},2}}B_{e_{\ell}}\right)\right]\\
    & = \frac{1}{n^{2\ell}}\sum_{\substack{\omega_{1}\neq \omega_{2}\\E_{\omega_{1},2} = E_{\omega_{2},2}}}\mathbb{E}_{\bm{0},0}\left[\prod_{j=1}^{m}\prod_{e_{j}\in E_{\omega_{1},1_{j}}}A_{e_{j}}^{(j)}\prod_{e_{j}\in E_{\omega_{2},1_{j}}}A_{e_{j}}^{(j)}\right].
\end{align*}
Fix $\omega_{1}\neq \omega_{2}$ such that $E_{\omega_{1},2} = E_{\omega_{2},2}$ then suppose $\omega_{1}$ and $\omega_{2}$ share $a$ many $\mathbb{A}$ type wedges, that is,
\begin{align*}
    \left|\bigcup_{i=1}^{2}\bigcup_{j=1}^{m}E_{\omega_{i},1_{j}}\right| = 2a.
\end{align*}
Since $\omega_{1}\neq \omega_{2}$, and they cannot differ in one edge, hence $0\leq a\leq k-\ell-2$. Define $b = k-\ell-a$. Define,
\begin{align*}
    \mathcal{X}_{b} = \left\{\omega_{1}\neq \omega_{2}:E_{\omega_{1},2} = E_{\omega_{2},2}\text{ and } \left|\bigcup_{i=1}^{2}\bigcup_{j=1}^{m}E_{\omega_{i},1_{j}}\right| = 2(k-\ell-b)\right\}.
\end{align*}
Then,
\begin{align}\label{eq:expression_T2}
    T_{2} = \frac{1}{n^{2\ell}}\sum_{b=2}^{k-\ell}\sum_{\mathcal{X}_{b}}\mathbb{E}_{\bm{0},0}\left[\prod_{j=1}^{m}\prod_{e_{j}\in E_{\omega_{1},1_{j}}}A_{e_{j}}^{(j)}\prod_{e_{j}\in E_{\omega_{2},1_{j}}}A_{e_{j}}^{(j)}\right].
\end{align}
Fix $b$ and consider $\omega_{1},\omega_{2}\in \mathcal{X}_{b}$. Suppose that there are $\delta_{r}$ many common wedges contributed by $\bm{A}_{r}$ for all $1\leq r\leq m$. Then we must have,
\begin{align*}
    \sum_{j=1}^{m}\delta_{j} = k-\ell-b.
\end{align*}
Recall that under $\bm{H}_{0},\bm{A}_{r}, 1\leq r\leq m$ are independent and hence for above choice of $\omega_{1}$ and $\omega_{2}$ we have,
\begin{align*}
    \mathbb{E}_{\bm{0},0}\left[\prod_{j=1}^{m}\prod_{e_{j}\in E_{\omega_{1},1_{j}}}A_{e_{j}}^{(j)}\prod_{e_{j}\in E_{\omega_{2},1_{j}}}A_{e_{j}}^{(j)}\right]
    & = \prod_{j=1}^{m}\mathbb{E}\left[\prod_{e_{j}\in E_{\omega_{1},1_{j}}}A_{e_{j}}^{(j)}\prod_{e_{j}\in E_{\omega_{2},1_{j}}}A_{e_{j}}^{(j)}\right]\\
    & = \prod_{j=1}^{m}\left(\dfrac{d^{(j)}}{n}\right)^{\delta_{j} + 2(k_{j}-\delta_{j})} = O\left(\frac{d}{n}\right)^{k-\ell+b}.
\end{align*}
Hence by \eqref{eq:expression_T2} we have,
\begin{align}\label{eq:T2_order_1}
    T_{2} = O\left(\frac{1}{n^{2\ell}}\sum_{b=2}^{k-\ell}\sum_{\mathcal{X}_{b}}\left(\frac{d}{n}\right)^{k-\ell+b}\right).
\end{align}
Now we need an upper bound on $\left|\mathcal{X}_{b}\right|$ for $2\leq b\leq k-\ell$. Observe that similar to \eqref{eq:count-cycle} we can choose the first cycle $\omega_{1}$ in $O\left(n^{k}p^{\ell}\right)$ many ways whenever $k=o(\sqrt{n})$. By definition of $\mathcal{X}_{b}$, the second cycle $\omega_{2}$ can be chosen in $O\left(n^{b-1}\right)$ ways. Hence,
\begin{align*}
    \left|\mathcal{X}_{b}\right| = O\left(n^{k+\ell + b-1}\right)
\end{align*}
Recalling $k = o(\sqrt{\log n})$, then by \eqref{eq:T2_order_1} we conclude that $T_{2} = o\left(1\right)$. Finally combining with \eqref{eq:T1_order_1} we have,
\begin{align}\label{eq:asymp-gaussian-var}
    \mathbb{E}_{\bm{0},0}
    &\left[Y_{n,k_{1},\cdots,k_{m},\ell}^2\right] \overset{n\rightarrow\infty}{\longrightarrow} \frac{1}{2k}\prod_{j=1}^{m}d_{j}^{k_{j}}\frac{1}{\gamma^{\ell}}\frac{k!}{\ell!\prod_{j=1}^{m}k_{j}!}
\end{align}
Now to show asymptotic Gaussianity of $Y_{n,k_{j_{1}},\cdots,k_{j_{m}},\ell_{j}}, r_{1}+1\leq j\leq r$ we show that the limits of the moments satisfy Wick's formula, that is we will show that for all $\zeta\in\mathbb{N}$, $T_{n,i}\in \{Y_{n,k_{j_{1}},\cdots,k_{j_{m}},\ell_{j}}:r_{1}+1\leq j\leq r\}, i\in [\zeta]$,
\begin{align}\label{eq:asymp-Wick}
    \mathbb{E}\left[\prod_{\nu = 1}^{\zeta}T_{n,\nu}\right] = 
    \begin{cases}
    \sum_{\eta}\prod_{i=1}^{\zeta/2}\mathbb{E}\left[T_{n,\eta(i,1)}T_{n,\eta(i,2)}\right] + o(1)& \text{ if }\zeta\text{ is even}\\
    o(1) & \text{ otherwise }
    \end{cases}
\end{align}
where $\eta$ is a partition of $[\zeta]$ into $\frac{\zeta}{2}$ blocks of size two and for $j\in \{1,2\}$, $\eta(i,j)$ denotes the $j^{th}$ element of the $i^{th}$ block of $\eta$.\\

Fix $\zeta\in\mathbb{N}$, and consider a choice $\in \{Y_{n,k_{j_{1}},\cdots,k_{j_{m}},\ell_{j}}:r_{1}+1\leq j\leq r\}$ for $1\leq i\leq \zeta$. For notational convenience in the following we will consider $T_{n,i}$ to have cycles with $k_{i_{s}}$ many $\bm{A}_{s}$ many wedges for $1\leq s\leq m$ and $\ell_{i}$ many $\bm{B}$ type wedges. Then by definition,
\begin{align}\label{eq:T_n-nu-prod-moment-eq1}
    \mathbb{E}_{\bm{0},0}\left[\prod_{\nu=1}^{\zeta}T_{n,\nu}\right] 
    = \dfrac{1}{n^{\sum_{i=1}^{\zeta}l_{i}}}\sum_{\omega_{1:\zeta}}\mathbb{E}_{\bm{0},0}\left[\prod_{i=1 }^{\zeta}\left(\prod_{j=1}^{m}\prod_{e_{j}\in E_{\omega_{i},1_{j}}}A_{e_{j}}^{(j)}\right)\right]\mathbb{E}_{\bm{0},0}\left[\prod_{i=1}^{\zeta}\left(\prod_{e_{2}\in E_{\omega_{i},2}}B_{e_{2}}\right)\right]
\end{align}
where $\omega_{1:\zeta}$ is a collection of cycles $\omega_{1},\cdots,\omega_{\zeta}$ on the factor graph such that $\omega_{i}$ has $k_{i_{s}}$ many wedges coming from $\bm{A}_{s}$ for all $1\leq s\leq m$ and $\ell_{i}$ many $\bm{B}$ type wedges with $x_{i}$ contiguous block of $\mathbb{A}$ type wedges and $\bm{B}$ type wedges where in a cycle $\omega$ we call a block of wedges to be $\mathbb{A}$ type wedges if there are no $\bm{B}$ type wedge in that block. Further consider $\Omega$ to be the collection of all such $\omega_{1:\zeta}$. 

\revsag{Given a cycle $\omega$ consider $\mathcal{G}(\omega)$ to be the graph corresponding to it. Suppose that $\omega$ has $x$ contiguous blocks of $\mathbb{A}$ type wedges. For all $1\leq j\leq x$ consider $\mathcal{G}_{\mathbb{A}}(\omega,j)$ to be the $j^{th}$ block of $\mathbb{A}$ type wedges. Now, we construct a quotient graph $\mathcal{G}_{Q}(\omega)$ by identifying the block $\mathcal{G}_{\mathbb{A}}(\omega,j)$ to be a single vertex for all $1\leq j\leq x$. In other words, for each block of contiguous $\mathbb{A}$ type wedges, we collapse all associated edges and vertices into one single vertex called a quotient operation vertex. This operation is illustrated in Figure \ref{fig:quotient-graph} and is called a quotient operation. The $\bm B$ type edges and vertices are kept intact in the operation.} 

\begin{figure}
    \centering
    \includegraphics[scale = 0.8]{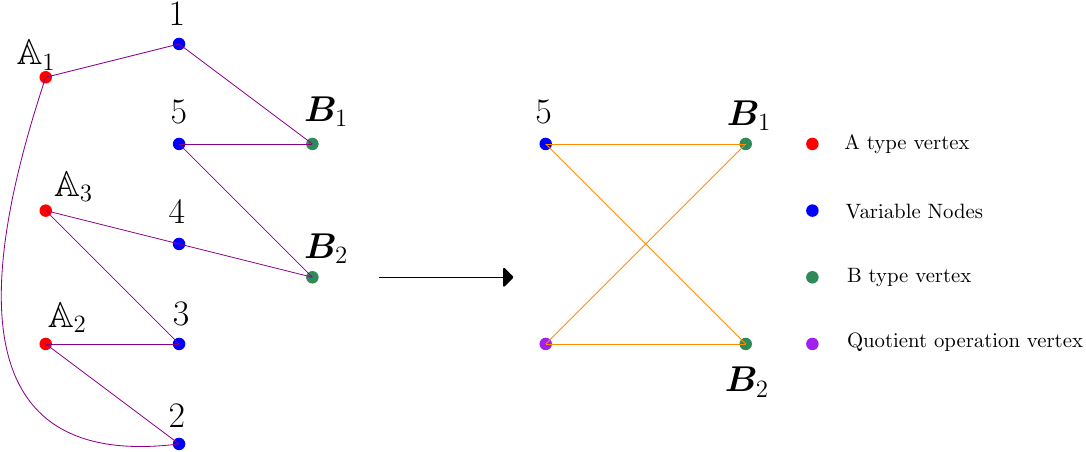}
    \caption{An example of construction of quotient graph by identifying the $\mathbb{A}$ type wedges $\{(1, \mathbb{A}_1,2), (2,\mathbb{A}_2,3), (3,\mathbb{A}_3, 4)\}$ to be a single vertex (quotient operation vertices) in the newly constructed graph.}
    \label{fig:quotient-graph}
\end{figure}

Since, the $\bm{B}$ type vertices, as well as edges, remain unchanged in the quotient graph $\mathcal{G}_{Q}(\omega)$, hence \eqref{eq:T_n-nu-prod-moment-eq1} becomes,
\begin{align}\label{eq:T_n-nu-prod-moment-eq2}
    \mathbb{E}_{\bm{0},0}\left[\prod_{\nu=1}^{\zeta}T_{n,\nu}\right] = \dfrac{1}{n^{\sum_{i=1}^{\zeta}l_{i}}}\sum_{\omega_{1:\zeta}}\mathbb{E}_{\bm{0},0}\left[\prod_{i=1 }^{\zeta}\left(\prod_{j=1}^{m}\prod_{e_{j}\in E_{\omega_{i},1_{j}}}A_{e_{j}}^{(j)}\right)\right]\mathbb{E}_{\bm{0},0}\left[\prod_{i=1}^{t}\left(\prod_{e\in E\left(\mathcal{G}_{Q}(\omega_{i})\right)}B_{e}\right)\right]
\end{align}
where $E\left(\mathcal{G}_{Q}(\omega_{i})\right)$ denotes the edges of the factor graph $\mathcal{G}_{Q}(\omega_{i})$ constructed from $\mathcal{G}(\omega_{i})$ for all $1\leq i\leq \zeta$. Now consider the following equivalence relation among the vertices formed by the quotient operation. For two cycles $\omega_{i_{1}}$ and $\omega_{i_{2}}$ if the first and the last vertices of $\mathcal{G}_{\mathbb{A}}(\omega_{i_{1}},j_{1})$ and $\mathcal{G}_{\mathbb{A}}(\omega_{i_{2}},j_{2})$ are the same for some $1\leq j_{1}\leq x_{i_{1}}$ and $1\leq j_{2}\leq x_{i_{2}}$, then we consider the corresponding vertices of $\mathcal{G}_{Q}(\omega_{i_{1}})$ and $\mathcal{G}_{Q}(\omega_{i_{2}})$ to be the same. For each $\omega_{1:\zeta}$ consider $\mathcal{G}(\omega_{1:\zeta},\mathbb{A})$ to be the collection of $\mathbb{A}$ type wedges coming from $\omega_{1:\zeta}$ and $\mathcal{G}_{Q}(\omega_{1:\zeta})$ to be the collection of quotient graph constructed from $\omega_{i}, 1\leq i\leq \zeta$. Further consider,
\begin{align*}
    \mathbb{G}(\mathbb{A}) = \left\{\mathcal{G}(\omega_{1:\zeta},\mathbb{A}):\omega_{1:\zeta}\in \Omega\right\}
\end{align*}
and given $g\in \mathbb{G}(\mathbb{A})$ let $\omega_{1:\zeta}^{(g)}:=(\omega_{1}^{(g)},\cdots,\omega_{\zeta}^{(g)})\in \Omega$ be such that $\mathcal{G}(\omega_{1:\zeta}^{(g)},\mathbb{A}) = g$. Observe that by construction $\omega_{g}$ is not unique. Then from \eqref{eq:T_n-nu-prod-moment-eq2} we have,
\begin{align}
    \mathbb{E}_{\bm{0},0}\left[\prod_{\nu=1}^{\zeta}T_{n,\nu}\right] =
    \dfrac{1}{n^{\sum_{i=1}^{\zeta}l_{i}}}
    &\left(\sum_{g\in\mathbb{G}(\mathbb{A})}\mathbb{E}_{\bm{0},0}\left[\prod_{i=1 }^{\zeta}\left(\prod_{j=1}^{m}\prod_{e_{j}\in E_{\omega_{i}^{(g)},1_{j}}}A_{e_{j}}^{(j)}\right)\right]\right)\nonumber\\
    &\left(\sum_{\mathcal{G}_{Q}(\omega_{1:\zeta})}\mathbb{E}_{\bm{0},0}\left[\prod_{i=1}^{t}\left(\prod_{e\in E\left(\mathcal{G}_{Q}(\omega_{i})\right)}B_{e}\right)\right]\right)\label{eq:T-moment-decomp}
\end{align}
Now consider,
\begin{align*}
    \mathbb{G}_{1}(\mathbb{A}) = \left\{g\in \mathbb{G}(\mathbb{A}): \text{ no overlap among $\mathbb{A}$ type edges in }\omega_{i}^{(g)}, 1\leq i\leq \zeta\right\}\text{ and }\mathbb{G}_{2}(\mathbb{A}) = \mathbb{G}(\mathbb{A})\setminus\mathbb{G}_{1}(\mathbb{A})
\end{align*}
It is easy to observe that for $\omega_{1:\zeta}\in \mathbb{G}_{1}(\mathbb{A})$
\begin{align*}
    \mathbb{E}_{\bm{0},0}\left[\prod_{i=1 }^{\zeta}\left(\prod_{j=1}^{m}\prod_{e_{j}\in E_{\omega_{i},1_{j}}}A_{e_{j}}^{(j)}\right)\right] = O\left(\dfrac{d}{n}\right)^{\sum_{i=1}^{\zeta}\sum_{j\leq x_{i}}\beta_{j}^{(i)}}
\end{align*}
where $\beta_{j}^{(i)}$ is the number of wedges in the $j^{th}$ block of $\omega_{i}$ and $d = \max_{j=1}^{m}d_{j}$. Further,
\begin{align*}
    \left|\mathbb{G}_{1}(\mathbb{A})\right| = O(n)^{\sum_{i=1}^{\zeta}\sum_{j\leq x_{i}}(\beta_{j}^{(i)}+1)}
\end{align*}
which follows since in a $\mathbb{A}$ type block having $\beta$ wedges, there must be $\beta+1$ variable nodes of which there are $O(n)$ many options (Note that for every wedge we have $m$ many options which is constant in $n$). Thus the total contribution coming from non-overlapping cycles is,
\begin{align}\label{eq:A-contribution}
    O\left(\dfrac{d}{n}\right)^{\sum_{i=1}^{\zeta}\sum_{j\leq x_{i}}\beta_{j}^{(i)}}O(n)^{\sum_{i=1}^{\zeta}\sum_{j\leq x_{i}}(\beta_{j}^{(i)}+1)}
\end{align}
Notice that the dominant contribution comes from $\mathbb{G}_{1}(\mathbb{A})$, because if we consider cycles having $\xi$ many overlapping wedges then, we gain a term of $O(n^{\xi})$ in computing the expectation, while we lose a term of $O(n^{\xi+1})$ while counting above number of cycles as overlap in $\xi$ many wedges implies we must have overlap in $\xi+1$ many vertices. Hence,
\begin{align*}
    \sum_{g\in\mathbb{G}(\mathbb{A})}\mathbb{E}_{\bm{0},0}\left[\prod_{i=1 }^{\zeta}\left(\prod_{j=1}^{m}\prod_{e_{j}\in E_{\omega_{i}^{(g)},1_{j}}}A_{e_{j}}^{(j)}\right)\right] = O\left(\dfrac{d}{n}\right)^{\sum_{i=1}^{\zeta}\sum_{j\leq x_{i}}\beta_{j}^{(i)}}O(n)^{\sum_{i=1}^{\zeta}\sum_{j\leq x_{i}}(\beta_{j}^{(i)}+1)}.
\end{align*}
Next, we investigate the term contributed by $\bm{B}$ type wedges. Define,
\begin{align*}
    \mathbb{G}_{W}(\bm{B}) = \left\{\mathcal{G}_{Q}(\omega_{1:\zeta}): \forall e\in \bigcup_{i=1}^{\zeta} E(\mathcal{G}_{Q}(\omega_{i})),\ \exists i_{1}\neq i_{2}\text{ such that }e\in E(\mathcal{G}_{Q}(\omega_{i_{c}})), c=1,2\right\}.
\end{align*}
Then it is easy to observe that only the contribution of $\mathbb{G}_{w}(\bm{B})$ is non-zero. Now, for every $\omega_{1:\zeta}\in \mathbb{G}_{W}(\bm{B})$ we consider a partition $\eta(\omega_{1:\zeta})$ of $[\zeta]$ as follows, if $a$ and $b$ are in the same partition of $\eta(\omega_{1:\zeta})$ then $\mathcal{G}_{Q}(\omega_{a})$ and $\mathcal{G}_{Q}(\omega_{b})$ share at least one edge. By considering the decomposition from \eqref{eq:T-moment-decomp} along with the bounds in \eqref{eq:A-contribution} and the collection $\mathbb{G}(\bm{B})$, it is easy to see that \eqref{eq:asymp-Wick} follows from the proof of Proposition 2 in \cite{lu2020contextual}. The variance of the asymptotic Gaussian distribution was identified in \eqref{eq:asymp-gaussian-var}. Finally, we are left with verifying asymptotic independence. Observe that it is enough to show,
\begin{align*}
    \mathbb{E}_{\bm{0},0}\left[Y_{n,k_{i_{1},\cdots,i_{m},\ell_{i}}}Y_{n,k_{j_{1},\cdots,j_{m},\ell_{j}}}\right]\rightarrow 0
\end{align*}
whenever $r_{1}+1\leq i\neq j\leq r$. Note that the above expectation is $0$ is $\ell_{i}\neq \ell_{j}$. Then it suffices to consider $l_{i} = l_{j}$. Without loss of generality suppose $k_{i}< k_{j}$, then
\begin{align*}
    \mathbb{E}_{\bm{0},0}\left[Y_{n,k_{i_{1},\cdots,i_{m},\ell_{i}}}Y_{n,k_{j_{1},\cdots,j_{m},\ell_{j}}}\right]
    = \dfrac{1}{n^{2\ell_{i}}}\sum_{\omega_{i},\omega_{j}}\mathbb{E}_{\bm{0},0}\left[\prod_{s=1}^{m}\left(\prod_{e\in E_{\omega_{i},1_{s}}}A_{e}^{(s)}\right)\left(\prod_{e\in E_{\omega_{j},1_{s}}}A_{e}^{(s)}\right)\right]
\end{align*}
where for $s=i,j$, $\omega_{s}$ is a cycle having $k_{s_{a}}$ many type $\bm{A}_{s}$ wedges for all $1\leq a\leq m$ and $l_{s}$ many type $\bm{B}$ wedges. Further, the sum is over all those cycles $\omega_{i},\omega_{j}$ who intersect on all the $l_{i}=l_{j}$ type $\bm{B}$ wedges. As before one can show that the dominant term is provided by cycles having no overlapping type $\mathbb{A}$ edges and hence,
\begin{align*}
    \mathbb{E}_{\bm{0},0}\left[Y_{n,k_{i_{1},\cdots,i_{m},\ell_{i}}}Y_{n,k_{j_{1},\cdots,j_{m},\ell_{j}}}\right]
    = n^{-2\ell_{i}}O\left(\dfrac{d}{n}\right)^{k_{i}+k_{j}-2\ell_{i}}O(n)^{k_{i}+k_{j}-\ell_{i}-1} = o(1)
\end{align*}
which holds whenever $k_{i},k_{j}=o(\sqrt{\log n})$ and hence the proof is completed under $\bm{H}_{0}$.\qed
\subsection{Proof of Theorem \ref{thm:cycle-stat-asymp-dist}(2)}
Once again let us first consider the case when $\ell=0$. We shall show that as $n \rightarrow \infty$,
\begin{equation}
\label{eq:fact_mom}
    \mathbb{E}_{\bm \lambda,\mu}(Y_{n,k_1,\cdots,k_m,0})_{[M]}\rightarrow \left(\frac{1}{2k}\frac{k!}{\prod_{j=1}^mk_j!}\left\{\prod\limits_{j=1}^{m}d^{k_j}_j+\prod\limits_{j=1}^{m}(\lambda_j\sqrt{d_j})^{k_j}\right\}\right)^M,
\end{equation}
This, as was under $\bm{H}_{0}$, will imply the Poisson convergence. Recalling notations from \eqref{eq:rewrite-Y-1} we have,
\begin{align*}
    Y_{n,k_{1},\cdots,k_{m},0} = \sum_{C\in \mathcal{C}_{k_{1},\cdots,k_{m},0}}Y_{C}.
\end{align*}
Let $\rho_j$ be the number of type $\bm{A}_j$ wedge such that the two variable nodes on either side have different community labels. Let us observe that since the subgraph is a cycle, $\rho_1+\cdots+\rho_m$ is even. By the argument of Lemma 3.3 of \cite{mnsf},
\begin{align}\label{eq:exp-YC-H1}
\mathbb{E}_{\bm \lambda,\mu}[Y_C]&=n^{-k}2^{-k}\sum\limits_{\mbox{\small{$\rho_1+\cdots+\rho_m$ even}}}\;\prod_{j=1}^{m}{k_j \choose \rho_j}a^{k_j-\rho_j}_jb^{\rho_j}_j\nonumber\\
&=n^{-k}2^{-k}\left(\prod_{j=1}^{m}(a_j+b_j)^{k_j}+\prod_{j=1}^{m}(a_j-b_j)^{k_j}\right)\nonumber\\
&=n^{-k}\left(\prod_{j=1}^{m}d^{k_j}_j+\prod_{j=1}^{m}(\lambda_j\sqrt{d_j})^{k_j}\right)
\end{align}
By \eqref{eq:count-M-tuple-factor-graph} and \eqref{eq:exp-YC-H1} we get,
\begin{align*}
    \mathbb{E}_{\bm{\lambda},\mu}\left[Y_{n,k_{1},\cdots,k_{m},0}\right] = (1+o(1))\frac{1}{2k}\frac{k!}{k_1!\cdots k_m!}\left(\prod_{j=1}^{m}d^{k_j}_j+\prod_{j=1}^{m}(\lambda_j\sqrt{d_j})^{k_j}\right)\text{ whenever }k=o(\sqrt{n})
\end{align*}
which proves \eqref{eq:fact_mom} for $M=1$. Define,
\begin{align*}
    \mu_{\text{Poi},\bm{H}_{1}} = \frac{1}{2k}\frac{k!}{k_1!\cdots k_m!}\left(\prod_{j=1}^{m}d^{k_j}_j+\prod_{j=1}^{m}(\lambda_j\sqrt{d_j})^{k_j}\right)
\end{align*}
For $M>1$, we now show that,
\begin{align*}
    \mathbb{E}_{\bm \lambda,\mu}(Y_{n,k_1,\cdots,k_m,0})_{[M]}\rightarrow \mu_{\text{Poi},\bm{H_{1}}}^M
\end{align*}
Considering the $M$ tuple of vertex disjoint cycles $(C_{1},\cdots,C_{M})$ similar to \eqref{eq:vertex-disjoint-M-prob} and using \eqref{eq:exp-YC-H1} we get,
\begin{align*}
    \mathbb{P}_{\bm{\lambda},\mu}\left(C_{i}\in G_{F}, 1\leq i\leq M\right) = n^{-kM}\left[\left(\prod_{j=1}^{m}d^{k_j}_j+\prod_{j=1}^{m}(\lambda_j\sqrt{d_j})^{k_j}\right)\right]^{M}.
\end{align*}
Recalling the number of possible choices for such $M$ tuple of vertex disjoint cycles from \eqref{eq:count-vertex-disjoint} shows that contribution of collection of $M$ tuple of vertex disjoint cycles in $\mathbb{E}_{\bm \lambda,\mu}(Y_{n,k_1,\cdots,k_m,0})_{[M]}$ as $n\rightarrow\infty$ is given by,
\begin{align*}
    \left(\frac{1}{2k}\frac{k!}{k_1!\cdots k_m!}\left(\prod_{j=1}^{m}d^{k_j}_j+\prod_{j=1}^{m}(\lambda_j\sqrt{d_j})^{k_j}\right)\right)^{M} = \mu_{\text{Poi},\bm{H}_{1}}^{M}\text{ whenever }k = o(\sqrt{n})
\end{align*}
Considering the Erd\H{o}s-R\'enyi random graph with edge probability $\max_{j=1}^{m}\{a_{i}/n,b_{i}/n\}$ and repeating the stochastic dominance argument from \eqref{eq:stochastic-dominance} shows that the contribution of $M$ tuple of vertex-overlapping cycles is asymptotically negligible completing the proof of \eqref{eq:fact_mom}.\\

Next, we consider the case when $\ell \neq 0$. First, we try to compute the mean of the cycle statistics. Let us observe that,
\[
\mathbb{E}_{\bm \lambda,\mu}[B_{i_1j_1}B_{i_2j_1}]=\frac{\mu}{n}\sigma_{i_1}\sigma_{i_2},
\]
for any $\bm{B}$ wedge $(i_1,j_1,i_2)$. Now let us fix a cycle $\omega$ with $k_1$ type $1$ wedge, $k_2$ type $2$ wedge up to $k_m$ type $m$ wedge and $\ell$ type $\bm{B}$ wedges.
For this cycle,
\begin{align}
    \mathbb{E}_{\bm \lambda,\mu}\left[\prod\limits_{k=1}^{m}\prod\limits_{e_k \in E_{\omega,1_k}}\prod\limits_{e_\ell \in E_{\omega,2}}A^{(k)}_{e_k}B_{e_\ell}|\bm \sigma\right]&=\left[
   \prod\limits_{k=1}^{m}\prod\limits_{(e^{+}_k,e^{-}_k) \in E_{\omega,1_k}}\left(\frac{d_k+\lambda_k\sqrt{d_k}\sigma_{e^{+}_k}\sigma_{e^{-}_k}}{n}\right)\prod\limits_{(e^{+}_\ell,e^{-}_\ell) \in E_{\omega,2}}\frac{\mu}{n}\sigma_{e^{+}_\ell}\sigma_{e^{-}_\ell}\right].
\end{align}
Hence,
\begin{align}
    \mathbb{E}_{\bm \lambda,\mu}[Y_{n,k_1,\cdots,k_m,\ell}]&=\frac{1}{n^{\ell}}\sum_{\omega}\mathbb E_{\bm \sigma}\left[
   \prod\limits_{k=1}^{m}\prod\limits_{(e^{+}_k,e^{-}_k) \in E_{\omega,1_k}}\left(\frac{d_k+\lambda_k\sqrt{d_k}\sigma_{e^{+}_k}\sigma_{e^{-}_k}}{n}\right)\prod\limits_{(e^{+}_\ell,e^{-}_\ell) \in E_{\omega,2}}\frac{\mu}{n}\sigma_{e^{+}_\ell}\sigma_{e^{-}_\ell}\right]\\
   &=\frac{\mu^\ell}{n^{k+\ell}}\sum_\omega\mathbb{E}_{\bm \sigma}\left[\sum_{\substack{z_{e_k}\in\{d_k,\lambda_k\sqrt{d_k}\sigma_{e^+_{k}e^-_{k}}\}\\k \in \{1,\cdots,M\}}}\prod\limits_{k=1}^{M}\prod\limits_{e_k \in E_{\omega,1_k}}z_{e_k}\prod_{e_\ell \in E_{\omega,2}}\sigma_{e^{+}_\ell}\sigma_{e^{-}_\ell}\right]\\
   &\overset{(1)}{=}\frac{\mu^\ell}{n^{k+\ell}}{n \choose k}\frac{k!}{k_1!\cdots k_m!\ell!}\frac{(k-1)!}{2}p^\ell\prod\limits_{j=1}^{m}(\lambda_j\sqrt{d_j})^{k_j}\\
   &=\left(\frac{\mu}{\gamma}\right)^{\ell}\frac{1}{2k}\frac{k!}{k_1!\cdots k_m!\ell!}\prod\limits_{j=1}^{m}(\lambda_j\sqrt{d_j})^{k_j}(1+o(1)),
\end{align}
where equality $(1)$ follows because the only contribution comes from the terms where all $z_{e_{k}}$'s are $\lambda_k\sqrt{d_k}\sigma_{e^+_{k}e^-_{k}}$. Next, we observe that under $\bm H_1$, for all $\omega$ with $k_j$ type $j$ wedges for $k=1,\cdots,M$ and $\ell$ type $\bm{B}$ wedges and $(i,j) \in E_{\omega,2}$,
\[
B_{ij}=X_{ij}+Z_{ij}, \quad \mbox{where $X_{ij}=\sqrt{\mu/n}\sigma_iu_j$ and $Z_{ij}$ are i.i.d Gaussians.}
\]
Thus we have,
\begin{align}
    Y_{n,k_1,\cdots,k_m,\ell}&=\frac{1}{n^\ell}\sum_{\omega}\prod\limits_{k=1}^{M}\prod\limits_{e_k \in E_{\omega,1_k}}A^{(k)}_{e_k}\prod\limits_{e_\ell \in E_{\omega,2}}B_{e_\ell}\\
    &=T_1+T_2+T_3,
\end{align}
where
\[
T_1=\frac{1}{n^{\ell}}\sum\limits_{\omega}\prod\limits_{k=1}^{M}\prod\limits_{e_k \in E_{\omega,1_k}}A^{(k)}_{e_k}\prod\limits_{e_\ell \in E_{\omega,2}}Z_{e_\ell},
\]
\[
T_2=\frac{1}{n^{\ell}}\sum\limits_{\omega}\prod\limits_{k=1}^{M}\prod\limits_{e_k \in E_{\omega,1_k}}A^{(k)}_{e_k}\prod\limits_{e_\ell \in E_{\omega,2}}X_{e_\ell},
\]
and
\[
T_3=Y_{n,k_1,\cdots,k_m,\ell}-T_1-T_2.
\]
Let us observe that $T_1$ can be analyzed in the same way as in $\bm H_0$. Hence,
\[
T_1 \overset{d}{\rightarrow}N\left(0,\frac{1}{2k}\;\frac{k!}{k_1!\cdots k_{j}!\ell!}\prod\limits_{j=1}^{m}d_{j}^{k_j}\right).
\]
Next, we shall show that,
\[
T_2 \overset{P}{\rightarrow}\frac{1}{2k}\;\frac{k!}{k_1!\cdots k_{j}!\ell!}\prod\limits_{j=1}^{m}(\lambda_j\sqrt{d_j})^{k_j}\left(\frac{\mu}{\gamma}\right)^{\ell}.
\]
Finally, we shall show $T_3 \overset{P}{\rightarrow}0$. This will complete the proof of the theorem. Let us begin with $T_3$. We observe that $E_{\bm \lambda,\mu}[T_3]=0$. So it suffices to show that $\mathbb E_{\bm \lambda,\mu}[T^2_3]=o(1)$ as $n \rightarrow \infty$. Let us express $T_3=\sum_\omega V_{n,k_1,\cdots,k_m,\ell,\omega}$, where
\[
V_{n,k_1,\cdots,k_m,\ell,\omega}=\frac{1}{n^\ell}\prod\limits_{k=1}^{m}\prod\limits_{e_k \in E_{\omega,1_k}}A^{(k)}_{e_k}\sum\limits_{E_{\omega,f}\subseteq E_{\omega,2}}\prod\limits_{e_\ell \in E_{\omega,f}}X_{e_\ell}\prod\limits_{e_\ell \in E_{\omega,2}\setminus E_{\omega,f}}Z_{e_\ell}.
\]
Now by argument similar to proof of Proposition 1, Calculations under $\bm H_1$ of \cite{lu2020contextual} we can show $T_3 \overset{P}{\rightarrow}0$. Now we try to analyze $T_2$. Let us observe that, since,
\[
\mathbb{E}_{\bm \lambda,\mu}[Y_{n,k_1,\cdots,k_m,\ell}]=\left(\frac{\mu}{\gamma}\right)^{\ell}\frac{1}{2k}\frac{k!}{k_1!\cdots k_m!\ell!}\prod\limits_{j=1}^{m}(\lambda_j\sqrt{d_j})^{k_j}(1+o(1)),
\]
\[
\mathbb{E}_{\bm \lambda,\mu}[T_1]=o(1) \quad \mbox{and} \quad \mathbb{E}_{\bm \lambda,\mu}[T_3]=o(1),
\]
It is enough to show,
\[
\mbox{Var}\left(T_2\right)=o(1).
\]
For a fixed set of type $B$ factor nodes $j_1,\cdots,j_\ell$, let us denote all cycles with $k_j$ type $j$ wedges and $\ell$ type B wedges involving the above-mentioned factor nodes by $\mathcal{C}_{j_1:j_\ell}$. Let us observe that,
\[
T_2=\frac{1}{n^\ell}\left(\sum\limits_{\omega \in \mathcal{C}_{j_1:j_\ell}}\prod\limits_{k=1}^{m}\prod\limits_{e_k \in E_{\omega,1_k}}A^{(k)}_{e_k}\prod\limits_{e_\ell \in E_{\omega,2}}\sigma^{+}_{e_\ell}\sigma^{-}_{e_\ell}\right)\left(\left(\frac{\mu}{n}\right)^\ell\sum_{j_1:j_\ell}\prod\limits_{h=1}^{\ell}u^2_{j_h}\right).
\]
As $u_{j_h}$'s are independent, standard Gaussians; by the law of large numbers,
\[
\frac{1}{n^\ell}\sum_{j_1:j_\ell}\prod\limits_{h=1}^{\ell}u^2_{j_h}\overset{P}{\rightarrow}1.
\]
So, it is enough to show,
\[
\mbox{Var}_{\bm \lambda,\mu}\left(\frac{1}{n^\ell}\left(\sum\limits_{\omega \in \mathcal{C}_{j_1:j_\ell}}\prod\limits_{k=1}^{m}\prod\limits_{e_k \in E_{\omega,1_k}}A^{(k)}_{e_k}\prod\limits_{e_\ell \in E_{\omega,2}}\sigma^{+}_{e_\ell}\sigma^{-}_{e_\ell}\right)\right)=o(1),
\]
or equivalently,
\begin{multline}
\mathbb{E}_{\bm \lambda,\mu}\left[\frac{1}{n^{2\ell}}\left(\sum\limits_{\omega \in \mathcal{C}_{j_1:j_\ell}}\prod\limits_{k=1}^{m}\prod\limits_{e_k \in E_{\omega,1_k}}A^{(k)}_{e_k}\prod\limits_{e_\ell \in E_{\omega,2}}\sigma^{+}_{e_\ell}\sigma^{-}_{e_\ell}\right)^2\right]\\
=\mathbb{E}_{\bm \lambda,\mu}\left[\frac{1}{n^\ell}\left(\sum\limits_{\omega \in \mathcal{C}_{j_1:j_\ell}}\prod\limits_{k=1}^{m}\prod\limits_{e_k \in E_{\omega,1_k}}A^{(k)}_{e_k}\prod\limits_{e_\ell \in E_{\omega,2}}\sigma^{+}_{e_\ell}\sigma^{-}_{e_\ell}\right)\right]^2+o(1).
\end{multline}
This follows by argument similar to the one used in the proof of Lemma \ref{eq:asymp_ind}.\qed
\subsection{Proof of Lemma \ref{eq:asymp_ind}}\label{eq:proof_of_decouple_lemma}
Let us consider the following quantities for $\omega_j=\omega_{k_{j1},\cdots,k_{jm},\ell_j}$ for $1 \le j \le r$.
\[
T_1:=\prod\limits_{j=1}^{r_1}\left(\sum\limits_{\omega_{j}}\prod\limits_{e_1 \in E_{\omega_j,1_{1}}}\cdots\prod\limits_{e_m \in E_{\omega_j,1_m}}A^{(1)}_{e_1}\cdots A^{(m)}_{e_m}\right)^{m_j},
\]
and
\[
T_2:=\prod\limits_{j=r_1+1}^{r}\left(\frac{1}{n^{\ell_j}}\sum\limits_{\omega_j}\prod\limits_{e_1 \in E_{\omega_j,1_{1}}}\cdots\prod\limits_{e_m \in E_{\omega_j,1_m}}\prod\limits_{e_\ell \in E_{\omega_j,2}}A^{(1)}_{e_1}\cdots A^{(m)}_{e_m}B_{e_\ell}\right)^{m_j}.
\]
Expanding these terms we get,
\[
T_1:=\prod\limits_{j=1}^{r_1}\left(\sum\limits_{\omega_{j1},\cdots,\omega_{jm_j}}\prod\limits_{q=1}^{m_j}\prod\limits_{e_1 \in E_{\omega_{jq},1_{1}}}\cdots\prod\limits_{e_m \in E_{\omega_{jq},1_{m}}}A^{(1)}_{e_1}\cdots A^{(m)}_{e_m}\right),
\]
and
\[
T_2:=\prod\limits_{j=r_1+1}^{r}\left(\frac{1}{n^{\ell_jm_j}}\sum\limits_{\omega_{j1},\cdots,\omega_{jm_j}}\prod\limits_{q=1}^{m_j}\prod\limits_{e_1 \in E_{\omega_{jq},1_{1}}}\cdots\prod\limits_{e_m \in E_{\omega_{jq},1_{m}}}\prod\limits_{e_\ell \in E_{\omega_{jq},2}}A^{(1)}_{e_1}\cdots A^{(m)}_{e_m}B_{e_\ell}\right).
\]
Under $\bm H_0$,
\begin{align}
    \mathbb E_0[T_1T_2]&=\sum\limits_{\omega_{11},\cdots,\omega_{1m_1}}\cdots\sum\limits_{\omega_{r1},\cdots,\omega_{rm_r}}\frac{1}{n^{\sum_{j=r_1+1}^{r}\ell_jm_j}}\mathbb{E}_{\bm 0,0}\Bigg[\prod_{k=1}^{m}\prod_{t=1}^{r}\prod_{v=1}^{m_t}\prod_{e_{tk} \in E_{\omega_{tv,1_k}}}A^{(k)}_{e_{tk}}\prod_{t=r_1+1}^{r}\prod_{v=1}^{m_\ell}\prod\limits_{e_{t\ell} \in E_{\omega_{tv},2}}B_{e_{t\ell}}\Bigg]
\end{align}
By model assumptions,
\begin{align}
    &\sum\limits_{\omega_{11},\cdots,\omega_{1m_1}}\cdots\sum\limits_{\omega_{r1},\cdots,\omega_{rm_r}}\frac{1}{n^{\sum_{j=r_1+1}^{r}\ell_jm_j}}\mathbb{E}_{\bm 0,0}\Bigg[\prod_{k=1}^{m}\prod_{t=1}^{r}\prod_{v=1}^{m_t}\prod_{e_{tk} \in E_{\omega_{tv,1_k}}}A^{(k)}_{e_{tk}}\prod_{t=r_1+1}^{r}\prod_{v=1}^{m_\ell}\prod\limits_{e_{t\ell} \in E_{\omega_{tv},2}}B_{e_{t\ell}}\Bigg]\\
    =&\sum\limits_{\omega_{11},\cdots,\omega_{1m_1}}\cdots\sum\limits_{\omega_{r1},\cdots,\omega_{rm_r}}\frac{1}{n^{\sum_{j=r_1+1}^{r}\ell_jm_j}}\Bigg\{\mathbb{E}_{\bm 0,0}\Bigg[\prod_{k=1}^{m}\Bigg(\prod_{t=1}^{r}\prod_{v=1}^{m_t}\prod_{e_{tk} \in E_{\omega_{tv,1_k}}}A^{(k)}_{e_{tk}}\Bigg)\Bigg(\prod_{t=r_1+1}^{r}\prod_{v=1}^{m_t}\prod_{e_{tk} \in E_{\omega_{tv,1_k}}}A^{(k)}_{e_{tk}}\Bigg)\Bigg]\\
    &\times\mathbb{E}_{\bm 0,0}\Bigg[\prod_{t=r_1+1}^{r}\prod_{v=1}^{m_\ell}\prod\limits_{e_{t\ell} \in E_{\omega_{tv},2}}B_{e_{t\ell}}\Bigg]\Bigg\}.
\end{align}
Similar calculations also show that,
\begin{align}
 \mathbb{E}_{\bm 0,0}\left[\prod\limits_{j=1}^{r_1}Y^{m_j}_{n,k_{j_{1}},\cdots,k_{j_{m}},0}\right]\hspace{-0.01in}\mathbb{E}_{\bm 0,0}\left[\prod\limits_{j=r_1+1}^{r}Y^{m_j}_{n,k_{j_{1}},\cdots,k_{j_{m}},l_{j}}\right]&= \sum\limits_{\omega_{11},\cdots,\omega_{1m_1}}\cdots\sum\limits_{\omega_{r1},\cdots,\omega_{rm_r}}\frac{1}{n^{\sum_{j=r_1+1}^{r}\ell_jm_j}}\\
 &\times\Bigg\{\mathbb{E}_{\bm 0,0}\Bigg[\prod_{k=1}^{m}\prod_{t=1}^{r}\prod_{v=1}^{m_t}\prod_{e_{tk} \in E_{\omega_{tv,1_k}}}A^{(k)}_{e_{tk}}\Bigg]\\
&\times\mathbb{E}_{\bm 0,0}\Bigg[\prod_{t=r_1+1}^{r}\prod_{v=1}^{m_t}\prod_{e_{tk} \in E_{\omega_{tv,1_k}}}A^{(k)}_{e_{tk}}\Bigg]\\
&\times\mathbb{E}_{\bm 0,0}\Bigg[\prod_{t=r_1+1}^{r}\prod_{v=1}^{m_\ell}\prod\limits_{e_{t\ell} \in E_{\omega_{tv},2}}B_{e_{t\ell}}\Bigg]\Bigg\}.      
\end{align}
Let us observe that these two terms seem to be different because of the presence of overlapping cycles. For example, if two cycles overlap on $m$-edges we gain a factor of $O(n^m)$ in the first term. But, then they also overlap in at least $m+1$ vertices and thus we lose $O(n^{m+1})$ terms in choosing the cycles. Hence the dominant contribution comes from the non-overlapping cycles, implying part $(i)$ of the Lemma.

Under $\bm H_1$, we observe that conditioned on $\bm \sigma$, $\bm{A}_{k}$ for $k=1\cdots,m$ and $\bm{B}$ are independent. Hence, to compute $\mathbb{E}_{\bm \lambda, \mu}[\prod_{j=1}^{r}Y^{m_j}_{n,k_{j_{1}},\cdots,k_{j_{m}},l_{j}}]$, we first condition on $\bm \sigma$. So we get,
\begin{multline}
    \mathbb E_{\bm \lambda,\mu}[T_1T_2]
    \\=\mathbb{E}_{\bm \sigma}\Bigg[\sum\limits_{\omega_{11},\cdots,\omega_{1m_1}}\cdots\sum\limits_{\omega_{r1},\cdots,\omega_{rm_r}}\frac{1}{n^{\sum_{j=r_1+1}^{r}\ell_jm_j}}\mathbb{E}_{\bm \lambda,\mu}\Bigg[\prod_{k=1}^{m}\prod_{t=1}^{r}\prod_{v=1}^{m_t}\prod_{e_{tk} \in E_{\omega_{tv,1_k}}}A^{(k)}_{e_{tk}}\prod_{t=r_1+1}^{r}\prod_{v=1}^{m_\ell}\prod\limits_{e_{t\ell} \in E_{\omega_{tv},2}}B_{e_{t\ell}}|\bm \sigma\Bigg]\Bigg]
\end{multline}
Using model assumptions and calculations in previous part,
\begin{align}
    \sum\limits_{\omega_{11},\cdots,\omega_{1m_1}}\cdots\sum\limits_{\omega_{r1},\cdots,\omega_{rm_r}}\frac{1}{n^{\sum_{j=r_1+1}^{r}\ell_jm_j}}\mathbb{E}_{\bm \lambda,\mu}\Bigg[\prod_{k=1}^{m}\prod_{t=1}^{r}\prod_{v=1}^{m_t}\prod_{e_{tk} \in E_{\omega_{tv,1_k}}}A^{(k)}_{e_{tk}}\prod_{t=r_1+1}^{r}\prod_{v=1}^{m_\ell}\prod\limits_{e_{t\ell} \in E_{\omega_{tv},2}}B_{e_{t\ell}}|\bm \sigma\Bigg]\\
    =\sum\limits_{\omega_{11},\cdots,\omega_{1m_1}}\cdots\sum\limits_{\omega_{r1},\cdots,\omega_{rm_r}}\frac{1}{n^{\sum_{j=r_1+1}^{r}\ell_jm_j}}\Bigg\{\mathbb{E}_{\bm \lambda,\mu}\Bigg[\prod_{k=1}^{m}\Bigg(\prod_{t=1}^{r}\prod_{v=1}^{m_t}\prod_{e_{tk} \in E_{\omega_{tv,1_k}}}A^{(k)}_{e_{tk}}\Bigg)\Bigg(\prod_{t=r_1+1}^{r}\prod_{v=1}^{m_t}\prod_{e_{tk} \in E_{\omega_{tv,1_k}}}A^{(k)}_{e_{tk}}\Bigg)|\bm \sigma\Bigg]\\
    \times\mathbb{E}_{\bm \lambda,\mu}\Bigg[\prod_{t=r_1+1}^{r}\prod_{v=1}^{m_\ell}\prod\limits_{e_{t\ell} \in E_{\omega_{tv},2}}B_{e_{t\ell}}|\bm \sigma\Bigg]\Bigg\}.
\end{align}
Similarly,
\begin{align}
 \E_{\bm \sigma} & \Bigg[\mathbb{E}_{\bm \lambda,\mu}\left[\prod\limits_{j=1}^{r_1}Y^{m_j}_{n,k_{j_{1}},\cdots,k_{j_{m}},0}|\bm \sigma\right]\hspace{-0.01in}\mathbb{E}_{\bm \lambda,\mu}\left[\prod\limits_{j=r_1+1}^{r}Y^{m_j}_{n,k_{j_{1}},\cdots,k_{j_{m}},l_{j}}|\bm \sigma\right]\Bigg]\\ 
 &= \sum\limits_{\omega_{11},\cdots,\omega_{1m_1}}\cdots\sum\limits_{\omega_{r1},\cdots,\omega_{rm_r}}\frac{1}{n^{\sum_{j=r_1+1}^{r}\ell_jm_j}} \times\mathbb E_{\bm \sigma}\Bigg\{\mathbb{E}_{\bm \lambda,\mu}\Bigg[\prod_{k=1}^{m}\prod_{t=1}^{r}\prod_{v=1}^{m_t}\prod_{e_{tk} \in E_{\omega_{tv,1_k}}}A^{(k)}_{e_{tk}}|\bm \sigma\Bigg]\\
 &\qquad\qquad\qquad\times\mathbb{E}_{\bm \lambda,\mu}\Bigg[\prod_{t=r_1+1}^{r}\prod_{v=1}^{m_t}\prod_{e_{tk} \in E_{\omega_{tv,1_k}}}A^{(k)}_{e_{tk}}|\bm \sigma\Bigg] \times\mathbb{E}_{\bm \lambda,\mu}\Bigg[\prod_{t=r_1+1}^{r}\prod_{v=1}^{m_\ell}\prod\limits_{e_{t\ell} \in E_{\omega_{tv},2}}B_{e_{t\ell}}|\bm \sigma\Bigg]\Bigg\}.      
\end{align}
Again, as the dominant contributions come from the non-overlapping cycles by arguments of the previous part, the differences between the conditional expectations is $o(1)$. Hence, by Dominated Convergence Theorem, the second part of the lemma follows.\qed
\section{Proof of Theorem \ref{thm:contiguity}}\label{appendix:proofofthm3}
Consider the random variables,
\begin{align}
W^{(M)}&:=\prod_{K=1}^{M}\Bigg\{\prod_{k_1+\cdots+k_m=K}(1+\delta_{k_1,\cdots,k_m})^{\nu_{(k_1,\cdots,k_m)}}\exp\left(-\sum_{k_1+\cdots+k_m=K}\lambda_{k_1,\cdots,k_m}\delta_{k_1,\cdots,k_m}\right)\\
&\hspace{1.3in}\prod_{\substack{k_1+\cdots+k_m+\ell=K\\\ell \neq 0}}\exp\left(\frac{2\mu_{k_1,\cdots,k_m}Z_{(k_1,\cdots,k_m)}-\mu^2_{k_1,\cdots,k_m}}{2\sigma^2_{k_1,\cdots,k_m}}\right)\Bigg\},
\end{align}
for $M>0$ and,
\begin{align}
\label{eq:W_infty}
W^{(\infty)}&:=\prod_{K=1}^{\infty}\Bigg\{\prod_{k_1+\cdots+k_m=K}(1+\delta_{k_1,\cdots,k_m})^{\nu_{(k_1,\cdots,k_m)}}\exp\left(-\sum_{k_1+\cdots+k_m=K}\lambda_{k_1,\cdots,k_m}\delta_{k_1,\cdots,k_m}\right)\\
&\hspace{1.3in}\prod_{\substack{k_1+\cdots+k_m+\ell=K\\\ell \neq 0}}\exp\left(\frac{2\mu_{k_1,\cdots,k_m}Z_{(k_1,\cdots,k_m)}-\mu^2_{k_1,\cdots,k_m}}{2\sigma^2_{k_1,\cdots,k_m}}\right)\Bigg\}.
\end{align}
We prove it in two steps.
\paragraph{Step 1.}Let us observe that,
\begin{align}
\mathbb{E}_{\mathbb P_n}\Bigg[\prod_{k_1+\cdots+k_m=K}(1+\delta_{k_1,\cdots,k_m})^{\nu_{(k_1,\cdots,k_m)}}\exp\left(-\sum_{k_1+\cdots+k_m=K}\lambda_{k_1,\cdots,k_m}\delta_{k_1,\cdots,k_m}\right)\\
\prod_{\substack{k_1+\cdots+k_m+\ell=K\\\ell \neq 0}}\exp\left(\frac{2\mu_{k_1,\cdots,k_m}Z_{(k_1,\cdots,k_m)}-\mu^2_{k_1,\cdots,k_m}}{2\sigma^2_{k_1,\cdots,k_m}}\right)\Bigg]=1 \quad \forall i \in \mathbb{N}.
\end{align}
This implies $\{W^{(k)}\}$ is a martingale with respect to the natural filtration. Let us also observe that,
\begin{align}
\mathbb{E}_{\mathbb P_n}[(W^{(k)})^2]&=\prod_{K=1}^{\infty}\Bigg\{\prod_{k_1+\cdots+k_m=K}\exp\left(\lambda_{k_1,\cdots,k_m}\delta^2_{k_1,\cdots,k_m}+\lambda_{k_1,\cdots,k_m}\delta_{k_1,\cdots,k_m}\right)\\
&\hspace{1.4in}\prod_{\substack{k_1+\cdots+k_m+\ell=K\\\ell \neq 0}}\exp\left(\frac{\mu^2_{k_1,\cdots,k_m}}{\sigma^2_{k_1,\cdots,k_m}}\right)\Bigg\}<\infty \quad \forall k.
\end{align}
This implies $\{W^{(k)}\}$ is an $L_2$ bounded martingale. Hence there exists $W^{\infty}$ as defined in \eqref{eq:W_infty} in $L_2$ such that $W^{(k)}\overset{a.s/L_2}{\longrightarrow}W^{(\infty)}$. Now let us observe that,
\[
\log(W^{(k)})=Z_k+\widetilde{W}_k,
\]
where $Z_k$ come from Gaussian distribution. Hence,
\[
\log(W^{(\infty)})=Z+\widetilde{W},
\]
where $Z$ is Gaussian, implying $\mathbb P_n(W^{(\infty)}>0)=1$. That $W^{(\infty)}\in L_2$ immediately implies $\mathbb P_n(W^{(\infty)}<\infty)=1$. Hence, the term $W^{(\infty)}$ is well-defined.
\paragraph{Step 2.} It is enough to show that 
\[
T_n:=\frac{d\mathbb Q_n}{d\mathbb P_n} \overset{d}{\rightarrow} W^{(\infty)}.
\]
By the proof of Theorem \ref{thm:detection_threshold}, if $\lambda^2_1+\cdots+\lambda^2_m+\mu^2/\gamma\le1$, 
\[
\limsup\limits_{n \rightarrow \infty}\mathbb E_{\mathbb P_n}[T^2_n] < \infty.
\]
Hence the sequence $\{T_n\}$ is tight. By Prokhorov's Theorem, there exists a subsequence $\{n_k\}_{k=1}^{\infty}$ such that $T_{n_k}$ converges in distribution to $W(\{n_k\})$. We shall show that the limit $W(\{n_k\})$ does not depend on the subsequence and $W(\{n_k\})\overset{d}{=}W^{(\infty)}$. For $\varepsilon>0$, let us choose $M$ large such that,
\begin{align}
\label{eq:l_2diff}
   \Bigg|\prod_{K=1}^{M}\Bigg\{\prod_{k_1+\cdots+k_m=K}(1+\delta_{k_1,\cdots,k_m})^{\nu_{(k_1,\cdots,k_m)}}\exp\left(-\sum_{k_1+\cdots+k_m=K}\lambda_{k_1,\cdots,k_m}\delta_{k_1,\cdots,k_m}\right)\\
\prod_{\substack{k_1+\cdots+k_m+\ell=K\\\ell \neq 0}}\exp\left(\frac{2\mu_{k_1,\cdots,k_m}Z_{(k_1,\cdots,k_m)}-\mu^2_{k_1,\cdots,k_m}}{2\sigma^2_{k_1,\cdots,k_m}}\right)\Bigg\}-\\\prod_{K=1}^{\infty}\Bigg\{\prod_{k_1+\cdots+k_m=K}(1+\delta_{k_1,\cdots,k_m})^{\nu_{(k_1,\cdots,k_m)}}\exp\left(-\sum_{k_1+\cdots+k_m=K}\lambda_{k_1,\cdots,k_m}\delta_{k_1,\cdots,k_m}\right)\\
\hspace{1in}\prod_{\substack{k_1+\cdots+k_m+\ell=K\\\ell \neq 0}}\exp\left(\frac{2\mu_{k_1,\cdots,k_m}Z_{(k_1,\cdots,k_m)}-\mu^2_{k_1,\cdots,k_m}}{2\sigma^2_{k_1,\cdots,k_m}}\right)\Bigg\}\Bigg|<\varepsilon.
\end{align}
For this $M$, consider the joint distribution of $(T_{n_k},\{Y_{n_k,k_1,\cdots,k_m,\ell}\}$ where $(k_1,\cdots,k_m,\ell)\in\mathcal{K}$ for $\mathcal{K}:=\{(k_1,\cdots,k_m,\ell):k_1+\cdots+k_m+\ell\in\{1,\cdots,M\}\}$. By Theorem \ref{thm:cycle-stat-asymp-dist}, these random variables are tight with respect to $P_{n_k}$. So, there exists a further subsequence $\{n_{k_\ell}\}$ such that under $\mathbb P_{n_{k_\ell}}$,
\[
(T_{n_{k_\ell}},\{Y_{n_k,k_1,\cdots,k_m,\ell}\}_{(k_1,\cdots,k_m,\ell)\in \mathcal{K}}) \overset{d}{\rightarrow} (W(\{n_k\}),\bm \nu_{0,M}),
\]
where the set of random variables $\bm \nu_{0,M}$ is defined by Theorem \ref{thm:cycle-stat-asymp-dist}. Since, 
\[
\limsup\limits_{n \rightarrow \infty}\mathbb{E}_{\mathbb P_n}[T^2_n]<\infty,
\]
the sequence $\{Y_{n_{k_\ell}}\}$ is uniformly integrable, implying,
\[
\mathbb{E}[W(\{n_k\})]=\lim\limits_{\ell \rightarrow \infty}\mathbb{E}_{\mathbb P_{n_{k_\ell}}}[T_{n_{k_\ell}}]=1.
\]
Now for any bounded positive continuous function $f$, by Fatou's Lemma,
\begin{equation}
\label{eq:fatou_1}
\liminf\limits_{\ell \rightarrow \infty}\mathbb{E}_{\mathbb{P}_{n_{k_\ell}}}\left[f(\{Y_{n_k,k_1,\cdots,k_m,\ell}\}_{(k_1,\cdots,k_m,\ell)\in \mathcal{K}}))T_{n_{k_\ell}}\right]\ge \mathbb{E}\left[f(\bm \nu_{M,0}))W(\{n_k\})\right]
\end{equation}
For any constant $\xi$, by uniform integrability, $\xi=\xi\mathbb E[T_{n_{k_\ell}}]\rightarrow \xi\mathbb E[W(\{n_k\})]=\xi$. So, \eqref{eq:fatou_1} holds for any bounded continuous function. Replacing $f$ by $-f$, we get for any bounded continuous function,
\begin{equation}
\label{eq:fatou_2}
\lim\limits_{\ell \rightarrow \infty}\mathbb{E}_{\mathbb{P}_{n_{k_\ell}}}\left[f(\{Y_{n_k,k_1,\cdots,k_m,\ell}\}_{(k_1,\cdots,k_m,\ell)\in \mathcal{K}}))T_{n_{k_\ell}}\right]= \mathbb{E}\left[f(\bm \nu_{0,M}))W(\{n_k\})\right]
\end{equation}
By Theorem \ref{thm:cycle-stat-asymp-dist}, 
\begin{align}
\int f(\{Y_{n_k,k_1,\cdots,k_m,\ell}\}_{(k_1,\cdots,k_m,\ell)\in \mathcal{K}}))T_{n_{k_\ell}}d\mathbb{P}_{n_{k_\ell}}
&=\int f(\{Y_{n_k,k_1,\cdots,k_m,\ell}\}_{(k_1,\cdots,k_m,\ell)\in \mathcal{K}}))d\mathbb{Q}_{n_{k_\ell}}\\
&\rightarrow \int f(\bm \nu_{M,1})d\mathbb{Q},
\end{align}
where the set of random variables $\bm \nu_{M,1}$ is defined by Theorem \ref{thm:cycle-stat-asymp-dist} and $Q$ is the measure induced by $\bm \nu_{M,1}$. By definition of $W^{(M)}$,
\[
\int f(\bm \nu_{M,1})d\mathbb{Q}=\mathbb E[f(\bm \nu_{M,0})W^{(M)}],
\]
for any bounded continuous function $f$, and so $\int_AdQ=\mathbb{E}[1_AW^{(m)}]$ for any $A \in \sigma(\bm \nu_{M,0})$. This implies,
\[
W^{(m)}=\mathbb{E}[W(\{n_k\})|\sigma(\bm \nu_{M,0})].
\]
From Fatou's Lemma and definition of $W(\{n_k\})$,
\[
\mathbb E[W(\{n_k\})^2] \le \liminf\limits_{n \rightarrow \infty}\mathbb{E}_{\mathbb P_n}[T^2_n].
\]
As a consequence, by \eqref{eq:l_2diff},
\[
0 \le \mathbb{E}|W(\{n_k\})-W^{(M)}|^2=\mathbb{E}[W^2(\{n_k\})]-\mathbb{E}[(W^{(M)})^2]<\varepsilon.
\]
So, $L_2(W(\{n_k\}),W^{(M)})<\sqrt{\varepsilon}$. This implies $W^{(n)}\overset{L_2/d}{\rightarrow}W(\{n_k\})$ as $n \rightarrow \infty$. But we have also shown $W^{(n)}\overset{L_2/d}{\rightarrow}W^{(\infty)}$. This implies $W(\{n_k\})\overset{d}{=}W^{(\infty)}$, which completes the proof of the weak convergence statement.

The proof of the final statement follows exactly as the proof of equation (6) in \cite{Banerjee2018AsymptoticNA}.
\qed
\section{Proof of Theorem~\ref{thm:weak_recovery_threshold}}\label{appendix:proofofthm4}
Weak recovery above the threshold follows via a proposed estimator defined using self-avoiding walks in Section~\ref{subsec: weak_recovery}: it only remains to prove Lemma~\ref{lem:psd}, which is done in the next section. In this section, we prove the impossibility of reconstruction under the threshold. We start with a proposition.

\begin{prop}
\label{prop: conditional_TV_convergence}
Let $\sum_i \lambda_i + \mu^2/\gamma < 1$. Then, for any fixed $r$ with $1 \leq r \leq n$ and any $(\sigma_1, \dots, \sigma_r), (\tau_1, \dots, \tau_r) \in \{\pm 1\}^r$, we have
\[
\|\Pro_{\bm \lambda, \mu}(\cdot|\sigma_{1:r}) - \Pro_{\bm \lambda, \mu}(\cdot|\tau_{1:r})\|_{\mathrm{TV}} \to 0.
\]
\end{prop}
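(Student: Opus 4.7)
My plan is to bound the total variation distance by an $L^2(\mathbb{P}_{\bm 0,0})$ distance between the conditional likelihood ratios and then show this distance vanishes by repurposing the second-moment calculation in the proof of Proposition~\ref{prop:Elt^2-bd}. For $a\in\{\pm 1\}^r$, write $L_a := d\mathbb{P}_{\bm\lambda,\mu}(\cdot|\sigma_{1:r}=a)/d\mathbb{P}_{\bm 0,0}$, and let $L_{a,t}$ denote its truncation obtained by inserting $\mathbf{1}\{u\in\mathcal S\}$ inside the relevant conditional expectation (exactly as in Proposition~\ref{prop:Elt^2-bd}). Because the truncation error $\mathbb{E}_{\bm 0,0}[|L_a-L_{a,t}|]$ vanishes by the same computation as in Lemma~\ref{lemma:Lt-bdd}, and $2\,\|\mathbb{P}_{\bm\lambda,\mu}(\cdot|\sigma_{1:r})-\mathbb{P}_{\bm\lambda,\mu}(\cdot|\tau_{1:r})\|_{\mathrm{TV}}=\mathbb{E}_{\bm 0,0}[|L_{\sigma_{1:r}}-L_{\tau_{1:r}}|]$, Cauchy--Schwarz reduces the task to showing $\mathbb{E}_{\bm 0,0}[(L_{\sigma_{1:r},t}-L_{\tau_{1:r},t})^2]\to 0$. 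Since $\mathbb{E}_{\bm 0,0}[L_{a,t}^2]$ depends on $a$ only through the event that the two replicas agree on the first $r$ coordinates, it is the same for $a=\sigma_{1:r}$ and $a=\tau_{1:r}$, and the square expands to $2\,\mathbb{E}_{\bm 0,0}[L_{\sigma_{1:r},t}^2]-2\,\mathbb{E}_{\bm 0,0}[L_{\sigma_{1:r},t}L_{\tau_{1:r},t}]$.

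Next, repeating the computation leading up to (27) with $\bar\sigma_{1:r}=a$, $\bar\sigma'_{1:r}=b$ (and both otherwise uniform on $\{\pm1\}^{n-r}$), I obtain $\mathbb{E}_{\bm 0,0}[L_{a,t}L_{b,t}]=(1+o(1))\,K\cdot\mathbb{E}[e^{\Phi_{ab}}\mathbf 1_{\mathcal S^2}]$ with $K$ independent of $a,b$, $\Phi_{ab}=\sum_k\rho^2\lambda_k^2(d_k+n)/2+\mu\rho\langle u,u'\rangle$, and $\rho=(\langle a,b\rangle+S)/n$, where $S:=\sum_{i>r}\bar\sigma_i\bar\sigma'_i$ has the same law independent of $a,b$. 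Setting $q:=\langle a,b\rangle$, a direct calculation gives
\[
\Phi_{aa}-\Phi_{ab}=(r-q)\left[\frac{(r+q+2S)\sum_k\lambda_k^2(d_k+n)}{2n^2}+\frac{\mu\langle u,u'\rangle}{n}\right],
\]
so the target $\mathbb{E}_{\bm 0,0}[L_{a,t}^2-L_{a,t}L_{b,t}]$ reduces to $(1+o(1))\,K\cdot\mathbb{E}[(e^{\Phi_{aa}}-e^{\Phi_{ab}})\mathbf 1_{\mathcal S^2}]$.

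I would split this expectation on the event $\mathcal E:=\{|S|\le n^{0.6}\}\cap\{|\langle u,u'\rangle|\le p^{0.6}\}$, which satisfies $\mathbb{P}(\mathcal E^c)=o(1)$ by Chebyshev. On $\mathcal E$, both summands inside the bracket are $O(n^{-0.4})$, so $|\Phi_{aa}-\Phi_{ab}|=o(1)$ \emph{deterministically} (using that $|r-q|$ is a fixed constant). Combined with $|e^x-e^y|\le|x-y|(e^x+e^y)$ and the uniform bound $\mathbb{E}[e^{\Phi}\mathbf 1_{\mathcal S^2}]=O(1)$ afforded by Proposition~\ref{prop:Elt^2-bd}, the contribution from $\mathcal E$ is $o(1)$.

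The main obstacle is the tail $\mathcal E^c$: the obvious Cauchy--Schwarz bound needs $\sqrt{\mathbb{E}[e^{2\Phi}\mathbf 1_{\mathcal S^2}]}$, which is an $L^4$-type moment of the truncated likelihood ratio and is not generally finite under the bare contiguity condition. The remedy is H\"older's inequality with a slightly supra-linear exponent: because $\sum_k\lambda_k^2+\mu^2/\gamma<1$ \emph{strictly}, one can pick $\epsilon>0$ with $(1+\epsilon)\sum_k\lambda_k^2+(1+\epsilon)^2\mu^2/\gamma<1$; rerunning the Lu--Sen estimate that powers Proposition~\ref{prop:Elt^2-bd} with rescaled parameters $\tilde\lambda_k=\sqrt{1+\epsilon}\lambda_k$ and $\tilde\mu=(1+\epsilon)\mu$ then shows $\mathbb{E}[e^{(1+\epsilon)\Phi}\mathbf 1_{\mathcal S^2}]$ is uniformly bounded in $n$, and H\"older yields
\[
\mathbb{E}[e^{\Phi}\mathbf 1_{\mathcal S^2\cap\mathcal E^c}]\le\bigl(\mathbb{E}[e^{(1+\epsilon)\Phi}\mathbf 1_{\mathcal S^2}]\bigr)^{1/(1+\epsilon)}\,\mathbb{P}(\mathcal E^c)^{\epsilon/(1+\epsilon)}\to 0,
\]
closing out the bound and hence the proof.
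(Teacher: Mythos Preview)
Your approach is correct and coincides with the paper's: both reduce the TV distance to an $L^2(\mathbb{P}_{\bm 0,0})$ bound on the difference of truncated conditional likelihood ratios and then invoke the second-moment calculation behind Proposition~\ref{prop:Elt^2-bd}. The paper is terser at the final step, simply asserting that each cross term $\E_{\bm 0,0}[\tilde L_{\sigma_{1:r}}\tilde L_{\tau_{1:r}}]$ converges to a common limit (deferring to the proof of Theorem~\ref{thm:detection_threshold}), whereas your good-event/H\"older splitting makes this explicit and self-contained.
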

\begin{proof}
This is an adaptation of Proposition 3 of \cite{lu2020contextual} to our setup and the proof proceeds along the same lines. Since $r$ is fixed, let $L_{\bm \sigma_{1:r},n}:= \E_{\bm \sigma_{-r}, \bm u}[\Pro_{\bm \lambda, \mu}(A_1, \dots, A_m, B | \bm \sigma, \bm u) /\Pro_{0,0} (A_1, \dots, A_m, B)]$ and $L_{\bm \tau_{1:r}, n}$ (analogously defined) denote the conditional log-likelihoods, and let $\mathcal{S} = \{\|\bm u\| \leq 2\sqrt{p} \}$. Then, $\Pro(\mathcal S) \to 1$. We define the following truncations of the conditional log-likelihood to $\mathcal S$:
\[
\tilde L_{\bm \sigma_{1:r}, n} := \frac{\E_{\bm \sigma_{-r}, \bm u} \left [\Pro_{\bm \lambda, \mu}(A_1, \dots, A_m, B | \bm \sigma, \bm u)\bm 1_{[\bm u \in \mathcal{S}]} \right ]}{\Pro_{0,0} (A_1, \dots, A_m, B)},
\]
\[
\tilde L_{\bm \tau_{1:r}, n} := \frac{\E_{\bm \tau_{-r}, \bm u} \left [\Pro_{\bm \lambda, \mu}(A_1, \dots, A_m, B | \bm \sigma, \bm u)\bm 1_{[\bm u \in \mathcal{S}]} \right ]}{\Pro_{0,0} (A_1, \dots, A_m, B)}.
\]
Denote by $Q_{\bm \sigma_{1:r}, n}$ and $Q_{\bm \tau_{1:r}, n}$ the probability measures induced by $\tilde L_{\bm \sigma_{1:r}, n}$ and $\tilde L_{\bm \tau{1:r}, n}$, respectively, after normalization. Now, as long as, $\E_{0,0} \left [ L_{\bm \sigma_{1:r}, n}^2 \right], \E_{0,0} \left [ L_{\bm \tau_{1:r}, n}^2 \right] < \infty$, by Cauchy-Schwarz inequality, we have $\|\Pro_{\bm \lambda, \mu}(\cdot | \sigma_{1:r}) - Q_{\bm \sigma_{1:r}, n}(\cdot) \|_{\mathrm{TV}} \to 0$ and $\|\Pro_{\bm \lambda, \mu}(\cdot | \tau_{1:r}) - Q_{\bm \tau_{1:r}, n}(\cdot) \|_{\mathrm{TV}} \to 0$. From the proof of Theorem 2.1, this holds under $\sum_i \lambda_i + \mu^2/\gamma < 1$. Thus, it remains to show $\|Q_{\bm \tau_{1:r}, n}(\cdot) - Q_{\bm \tau_{1:r}, n}(\cdot)\|_{\mathrm{TV}} \to 0$, which amounts to showing $\frac{1}{\Pro(\mathcal S)}\E_{0,0}\left[ \big | \tilde L_{\bm \sigma_{1:r}, n} - \tilde L_{\bm \sigma_{1:r}, n} \big | \right] \to 0$. Again, $\E_{0,0}\left[ \big | \tilde L_{\bm \sigma_{1:r}, n} - \tilde L_{\bm \sigma_{1:r}, n} \big | \right] \leq \E_{0,0}\left[ \big ( \tilde L_{\bm \sigma_{1:r}, n} - \tilde L_{\bm \sigma_{1:r}, n} \big )^2 \right]$, so that it is enough to show
\[
\frac{1}{\Pro(S)}\E_{0,0}\left[ \big ( \tilde L_{\bm \sigma_{1:r}, n} - \tilde L_{\bm \sigma_{1:r}, n} \big )^2 \right] \to 0.
\]
Expanding the square, we get
\[
\E_{0,0}\left[ \big ( \tilde L_{\bm \sigma_{1:r}, n} - \tilde L_{\bm \sigma_{1:r}, n} \big )^2 \right] = \E_{0,0} \tilde L_{\bm \sigma_{1:r}, n}^2 + \E_{0,0} \tilde L_{\bm \tau_{1:r}, n}^2 - 2 \E_{0,0} \tilde L_{\bm \sigma_{1:r}, n} \tilde L_{\bm \tau_{1:r}, n},
\]
where 
\begin{align*}
    \tilde L_{\bm \sigma_{1:r}, n}^2 &= \left ( \E_{\bm \sigma_{-r}, \bm u} \left [  \frac{d\Pro_{\bm \lambda, \mu}(A_1, \dots, A_m, B | \bm \sigma, \bm u)}{d \Pro_{0,0} (A_1, \dots, A_m, B)} \mathbbm{1}\{\bm u \in \mathcal{S}\} \right ] \right )^2\\
    &= \E_{\bm \sigma_{-r}, \bm \tau_{-r}, \bm u, \bm v} \left [\frac{d\Pro_{\bm \lambda, \mu}(A_1, \dots, A_m, B | \bm \sigma, \bm u)}{d \Pro_{0,0} (A_1, \dots, A_m, B)}  \frac{d\Pro_{\bm \lambda, \mu}(A_1, \dots, A_m, B | \bm \sigma_{1:r}, \bm \tau_{-r}, \bm v )}{d \Pro_{0,0} (A_1, \dots, A_m, B)} \mathbbm{1}\{\bm u , v \in \mathcal{S}\}\right ].
\end{align*}
Using similar expansions for the other two terms it is enough to show that
\[
 \E_{0,0} \left [ \frac{d\Pro_{\bm \lambda, \mu}(A_1, \dots, A_m, B | \bm \sigma, \bm u)}{d \Pro_{0,0} (A_1, \dots, A_m, B)}  \frac{d\Pro_{\bm \lambda, \mu}(A_1, \dots, A_m, B | \bm \tau, \bm v)}{d \Pro_{0,0} (A_1, \dots, A_m, B)} \mathbbm{1}\{\bm u , v \in \mathcal{S}\}\right ]
\]
converges to a limit. This was shown in the proof of Theorem~\ref{thm:detection_threshold}, which completes the proof.
\end{proof}

We next translate the convergence of conditional data distributions in Proposition \ref{prop: conditional_TV_convergence} to a convergence of posterior distributions of the cluster assignments $\sigma_i$.

\begin{prop}
\label{prop: posterior_TV_convergence}
Let $\sum_i \lambda_i + \mu^2/\gamma < 1$, $S \subset [n]$, $|S| = r$ where $r$ is finite and fixed, $u \in [n]$ any fixed index such that $u \not\in S$. Then, as $n \to \infty$,
\[
\E_{\bm \lambda, \mu} \left [ \| \Pro_{\bm \lambda, \mu} ( \sigma_u | A_1, \dots, A_m, B, \sigma_S) - \Pro_{\bm 0, 0}(\sigma_u)\|_{\mathrm{TV}} \right ] \to 0.
\]
\end{prop}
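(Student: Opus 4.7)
The plan is to reduce this directly to Proposition \ref{prop: conditional_TV_convergence} via a Bayesian calculation exploiting the uniform prior on $\bm\sigma$. First I would write out the posterior of $\sigma_u$ using Bayes' rule: since $\bm\sigma$ is uniform on $\{\pm 1\}^n$, for $s\in\{\pm 1\}$,
\[
\Pro_{\bm\lambda,\mu}(\sigma_u=s\mid A_1,\ldots,A_m,B,\sigma_S)=\frac{\rho_s(A_{1:m},B,\sigma_S)}{\rho_1(A_{1:m},B,\sigma_S)+\rho_{-1}(A_{1:m},B,\sigma_S)},
\]
where $\rho_s$ denotes the conditional density of the data given $(\sigma_u=s,\sigma_S)$ with respect to the null measure $\Pro_{0,0}$. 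Since $\Pro_{0,0}(\sigma_u)$ is uniform on $\{\pm 1\}$, the total variation distance of the posterior from $\Pro_{0,0}(\sigma_u)$ equals $\tfrac{1}{2}\,|\rho_1-\rho_{-1}|/(\rho_1+\rho_{-1})$.

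Next I would integrate against the true data distribution under $\Pro_{\bm\lambda,\mu}$. Conditioning on $(\sigma_u,\sigma_S)$ and using the tower property together with the symmetric decomposition $\Pro_{\bm\lambda,\mu}(A_{1:m},B\mid \sigma_u,\sigma_S)d(A_{1:m},B) = \rho_{\sigma_u}\,d\Pro_{0,0}$, a short computation gives
\begin{align}
\E_{\bm\lambda,\mu}\bigl[\|\Pro_{\bm\lambda,\mu}(\sigma_u\mid A_{1:m},B,\sigma_S)-\Pro_{0,0}(\sigma_u)\|_{\mathrm{TV}}\bigr]
=\E_{\bm\sigma_S}\bigl[\,\|\Pro_{\bm\lambda,\mu}(\cdot\mid \sigma_u=1,\sigma_S)-\Pro_{\bm\lambda,\mu}(\cdot\mid \sigma_u=-1,\sigma_S)\|_{\mathrm{TV}}\bigr],
\end{align}
where the expectation on the right is over $\bm\sigma_S\in\{\pm 1\}^r$ with the uniform law. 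This is the standard identity that ``Bayes risk of a two-point testing problem with uniform prior equals half the $L^1$ difference of the likelihoods,'' adapted to TV distance.

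Finally I would invoke Proposition \ref{prop: conditional_TV_convergence} with the $(r+1)$-tuples $(\sigma_S,+1)$ and $(\sigma_S,-1)$: for every fixed $\sigma_S\in\{\pm 1\}^r$, the TV distance on the right-hand side tends to $0$ as $n\to\infty$. Since $r$ is fixed, the outer expectation is a finite average over the $2^r$ choices of $\sigma_S$, so we conclude convergence by the trivial bounded-convergence argument over a finite set. This completes the proof.

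I do not expect a real obstacle here: the content is the Bayesian identity in the display above, plus quoting Proposition \ref{prop: conditional_TV_convergence}. The only point to be careful about is the direction of the TV bound, namely that smallness of $|\rho_1-\rho_{-1}|$ relative to $\rho_1+\rho_{-1}$ integrated against either $\rho_1$ or $\rho_{-1}$ is exactly the TV distance between the two conditional laws; this is where the uniform prior is essential so that no extra likelihood-ratio tails need to be controlled.
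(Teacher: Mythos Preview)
Your proposal is correct and follows essentially the same route as the paper: both arguments use Bayes' rule with the uniform prior to rewrite the expected posterior TV distance as (a constant times) $\E_{\sigma_S}\big[\|\Pro_{\bm\lambda,\mu}(\cdot\mid\sigma_u=+1,\sigma_S)-\Pro_{\bm\lambda,\mu}(\cdot\mid\sigma_u=-1,\sigma_S)\|_{\mathrm{TV}}\big]$, and then invoke Proposition~\ref{prop: conditional_TV_convergence}. Your displayed identity is off by a factor of $\tfrac12$ (the left side equals $\tfrac12$ of the right), but this is immaterial to the conclusion.
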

\begin{proof}
\begin{align*}
    \E_{\bm \lambda, \mu} &\left [ \| \Pro_{\bm \lambda, \mu} ( \sigma_u | A_1, \dots, A_m, B, \sigma_S) - \Pro_{\bm 0, 0}(\sigma_u)\|_{\mathrm{TV}} \right ] \\
    &= \Pro_{\bm 0, 0}(\sigma_u) \E_{\bm \lambda, \mu} \left [ \left\| \frac{\Pro_{\bm \lambda, \mu} ( \sigma_u | A_1, \dots, A_m, B, \sigma_S)}{\Pro_{\bm 0, 0}(\sigma_u)} - 1 \right \|_{\mathrm{TV}} \right ]\\
    &= \Pro_{\bm 0, 0}(\sigma_u) \E_{\bm \lambda, \mu} \left [ \left\| \frac{\Pro_{\bm \lambda, \mu} ( \sigma_u, A_1, \dots, A_m, B, \sigma_S)}{\Pro_{\bm 0, 0}(\sigma_u) \Pro_{\bm \lambda, \mu} ( A_1, \dots, A_m, B, \sigma_S)} - 1 \right \|_{\mathrm{TV}} \right ]\\
    &= \Pro_{\bm 0, 0}(\sigma_u) \E_{\bm \lambda, \mu} \left [ \left\| \frac{\Pro_{\bm 0, 0} ( \sigma_u, \sigma_S) \Pro_{\bm \lambda, \mu} (A_1, \dots, A_m, B | \sigma_u, \sigma_S) }{\Pro_{\bm 0, 0}(\sigma_u, \sigma_S) \Pro_{\bm \lambda, \mu} ( A_1, \dots, A_m, B | \sigma_S)} - 1 \right \|_{\mathrm{TV}} \right ]\\
    &= \frac{1}{2} \E_{\bm \lambda, \mu} \left [ \E_{\bm \lambda, \mu} \left [ \left\| \frac{ \Pro_{\bm \lambda, \mu} (A_1, \dots, A_m, B | \sigma_u, \sigma_S) }{ \Pro_{\bm \lambda, \mu} ( A_1, \dots, A_m, B | \sigma_S)} - 1 \right \|_{\mathrm{TV}} \middle | \sigma_S \right ] \right ]\\
    &= \frac{1}{2} \E_{\bm \lambda, \mu} \Bigg [ \Pro_{\bm \lambda, \mu} ( A_1, \dots, A_m, B | \sigma_S) \times \\
    &\qquad\qquad\quad\E_{\bm \lambda, \mu} \left [ \left\| \Pro_{\bm \lambda, \mu} (A_1, \dots, A_m, B | \sigma_u, \sigma_S) - \Pro_{\bm \lambda, \mu} ( A_1, \dots, A_m, B | \sigma_S) \right \|_{\mathrm{TV}} \middle | \sigma_S \right ] \Bigg ]\\
    &= \frac{1}{4} \E_{\bm \lambda, \mu} \Bigg [ \Pro_{\bm \lambda, \mu} ( A_1, \dots, A_m, B | \sigma_S) \times \\
    &\qquad\qquad\quad \left\| \Pro_{\bm \lambda, \mu} (A_1, \dots, A_m, B | \sigma_u = +1, \sigma_S) - \Pro_{\bm \lambda, \mu} ( A_1, \dots, A_m, B | \sigma_u = -1, \sigma_S) \right \|_{\mathrm{TV}} \Bigg ]\\
\end{align*}
The term inside the expectation converges to 0 in probability by Proposition~\ref{prop: conditional_TV_convergence}, so that the outer expectation also converges to zero by the Dominated Convergence Theorem.
\end{proof}

It remains to prove the impossibility of weak recovery using Propositions \ref{prop: conditional_TV_convergence} and \ref{prop: posterior_TV_convergence}. This follows using the technique of the proof of Theorem 2.3 of \cite{banerjee2018contiguity}.
\section{Proof of Lemma~\ref{lem:psd}}
It is enough to show, for a constant delta as specified in Lemma~\ref{lem:psd} and for each $i_1, i_2 \in [n]$, that
\[
\E \left [ \widehat\Sigma_{i_1 i_2} \cdot \sigma_{i_1} \sigma_{i_2} \right ] \geq \delta \sqrt{\E \Big [ \widehat\Sigma_{i_1 i_2}^2 \Big]}.
\]
For the left hand side, we have
\begin{align*}
    \E \left [ \widehat\Sigma_{i_1 i_2} \cdot \sigma_{i_1} \sigma_{i_2} \right ] &= \frac{1}{|\mathcal{W}(i_1, i_2, k_1, \dots, k_m, \ell)|}\sum_{\alpha \in \mathcal{W}(i_1, i_2, k_1, \dots, k_m, \ell)} \E\left[ p_\alpha \sigma_{i_1} \sigma_{i_2} \right]\\
    &= \frac{1}{|\mathcal{W}(i_1, i_2, k_1, \dots, k_m, \ell)|}\sum_{\alpha \in \mathcal{W}(i_1, i_2, k_1, \dots, k_m, \ell)} \E\left[ \E[ p_\alpha | \bm{\sigma} ] \sigma_{i_1} \sigma_{i_2} \right]\\
    &= 1,
\end{align*}
while, for the right hand side,
\[
\E \Big [ \widehat\Sigma_{i_1 i_2}^2 \Big] = \frac{1}{|\mathcal{W}(i_1, i_2, k_1, \dots, k_m, \ell)|^2}\sum_{\alpha, \beta \in \mathcal{W}(i_1, i_2, k_1, \dots, k_m, l)} \E[ p_\alpha p_\beta ]
\]
For paths $\alpha, \beta \in \mathcal{W}(i_1, i_2, k_1, \dots, k_m, \ell)$ with no wedges in common, we have
\[
\E[p_\alpha p_\beta] = \E[\E[p_\alpha p_\beta | \bm{\sigma}]] = 1.
\]
Next, we turn to controlling $\E[p_\alpha p_\beta]$ for paths $\alpha$ and $\beta$ with common wedges. Let $\alpha$ and $\beta$ coincide on $s_i$ type $\bm{A}_{i}$ wedges, for $i = 1, \dots, m$, and $t$ type $\bm{B}$ wedges. For these terms, we have
\[
\E[p_\alpha p_\beta] = O(1) \left [ \Big ( \frac{n}{\lambda_1^2} \Big )^{s_1} \dots \Big ( \frac{n}{\lambda_m^2} \Big )^{s_m} \Big ( \frac{np}{\mu^2/\gamma} \Big)^t \right ].
\]
Further, for fixed $s_1, \dots, s_m, t$, the minimum possible number of common factor and variable nodes is between $\alpha$ and $\beta$ as described above is $t + \sum_i s_i$ variable nodes, $s_i$ type $\bm{A}_{i}$ factor nodes and $b$ type $\bm{B}$ factor nodes. This happens when two paths have the first $\sum_i s_i$ wedges common and of type $\mathbb{A}$, and the last $t$ wedges common and of type $\bm{B}$. Set $\bm{s} = (s_1, \dots, s_m)$ and $\bm{k} = (k_1, \dots, k_m)$. The number of path pairs having exactly this number of common nodes is then $n^{2(k - 1) - \sum_i s_i - t} p^{2\ell - t} g( \bm s, \bm k)$, where $g( \bm s, \bm k)$ is the number of valid choices of the type $\mathbb A$ factor nodes for a pair of paths $\alpha, \beta$ as described above that are determined except for this choice. In fact, it is not difficult to observe that
\[
g(\bm s, \bm k) \le \frac{(\sum_{i=1}^{m} s_i)!}{\prod_{i=1}^{m} (s_i)!} \left ( \frac{\Big(\sum_{i=1}^{m} (k_i - s_i)\Big)!}{ \prod_{i=1}^{m} (k_i - s_i)!} \right )^2.
\]
For terms with the minimum number of vertices common, the contribution to $\sum \E[p_\alpha p_\beta]$ is then
\[
O(1) n^{2(k - \ell - 1)-\sum_{i=1}^{m} s_i - t} p^{2\ell - t}  \prod_{i=1}^{m} \left ( \frac{n}{\lambda_i^2} \right )^{s_i} \left ( \frac{n}{\mu} \right )^t g( \bm s, \bm k) = O(1) n^{2(k - 1)} p^{2\ell} \prod_{i=1}^{m} \lambda_i^{-2s_i} \left ( \frac{\mu^2}{\gamma} \right)^{t} g( \bm s, \bm k),
\]

A similar calculation will show that the leading order term (in $n$) in the number of path pairs as described above is contributed by the pairs with minimum possible number of vertices common as the number of paths reduces when the paths intersect in a larger number of vertices. Then, summing over all possible number of intersecting wedges, we have, since $g(\bm s, \bm k) \leq 1$,
\begin{align*}
&\sum_{\alpha, \beta \in \mathcal{W}(i_1, i_2, k_1, \dots, k_m, \ell)} \E[p_\alpha p_\beta] \\
&\qquad\leq O(1) n^{2(k - 1)} p^{2\ell}  \sum_{i=1}^{m} \sum_{0 \leq  s_i \leq  k_i} \sum_{0 \leq t \leq \ell}  \left ( \prod_{i=1}^{m} \lambda_i^{-2s_i} \left ( \frac{\mu^2}{\gamma} \right)^{t} g( \bm s, \bm k) \right ).\\
&\qquad\leq O(1) n^{2(k-1)}p^{2\ell}\prod_{i=1}^{m} \lambda_i^{-2k_i} \left ( \frac{\mu^2}{\gamma} \right )^{-\ell}.
\end{align*}
Finally, to establish Lemma~\ref{lem:psd}, it remains to show that 
\[|\mathcal{W}(i_1, i_2, k_1, \dots, k_m,\ell)|^2 = \frac{k!}{k_1! \dots k_m! \ell!} n^{2(k-1)}p^{2l} \geq n^{2(k-1)}p^{2\ell} \prod_{i=1}^{m}\lambda_i^{-2k_i} \left ( \frac{\mu^2}{\gamma} \right )^{-\ell}
\]
for some choice of $\bm k$ and $\ell$. Choose
\[
\frac{k_i}{k} = \frac{\lambda^2_i}{\sum_{i=1}^{m} \lambda^2_i + \mu^2/\gamma},\quad  \frac{\ell}{k} = \frac{\mu^2/\gamma}{\sum_{i=1}^{m} \lambda^2_i + \mu^2/\gamma}.
\]
Since $\sum_{i=1}^{m} \lambda_i^2 + \mu^2/\gamma > 1$, we have $\lambda_i^2 > k_i / k$ and $\mu^2/\gamma > \ell/k$. Then,
\[
\frac{k!}{k_1! \dots k_m! \ell!} \geq \exp \left ( -2l \log \frac{\ell}{k} - 2 \sum_i \log \frac{k_i}{k} + o(1) \right ) \gtrsim \exp \left ( -\ell \log \frac{\mu^2}{\gamma} - k_i \log \lambda^2 \right ) = \prod_i \lambda^{-2k_i} \left ( \frac{\mu^2}{\gamma} \right )^{-\ell}.
\]
This completes the proof.
\section{Proofs of results in Section \ref{Belief_Propagation}}
\revsag{Let us compute the \emph{Belief Propagation Messages} in the factor graph described by Figure \ref{fig:factor_graph_new}.
\begin{figure}
    \centering
    \includegraphics[scale = 0.8]{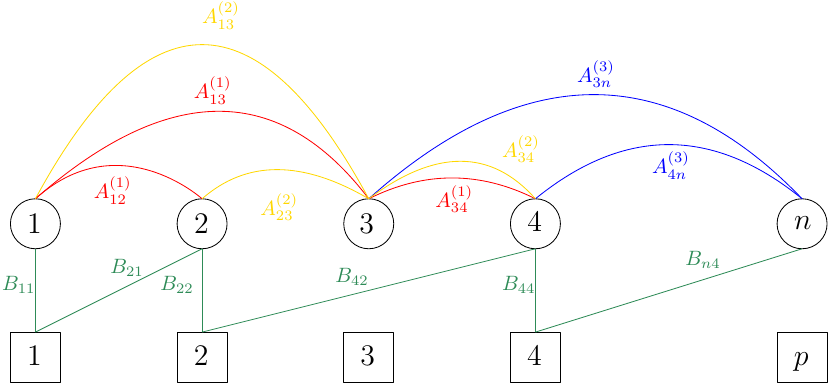}
    \caption{Combined Multi-Layered Factor graph for running the Belief Propagation Algorithm.}
    \label{fig:factor_graph_new}
\end{figure}
In this algorithm we compute messages $\nu^{t}_{i \rightarrow j;k}(\sigma_i)$ which are the marginal distributions of the variable $\sigma_i$ in the posterior distributions $\mathbb P_{i \rightarrow j;k}(\bm \sigma,\bm v|\bm G_1,\ldots,\bm G_m,\bm B / \sqrt{p})$ when all the connections to vertex $\sigma_j$ corresponding to the $k$-th network are removed from the factor graph. Similarly, we compute the messages $\nu^{t}_{i \rightarrow q}(\sigma_i)$ which are the marginal distributions of $\sigma_i$ in $\mathbb P_{i \rightarrow q}(\bm \sigma,\bm u|\bm G_1,\ldots,\bm G_m,\bm B / \sqrt{p})$ when the vertex $u_q$ is removed from the factor graph with all associated connections and the messages  $\nu^{t}_{q \rightarrow i}(u_q)$ which are the marginal distributions of $u_q$ in $\mathbb P_{q \rightarrow i}(\bm \sigma,\bm u|\bm G_1,\ldots,\bm G_m,\bm B / \sqrt{p})$ when the vertex $\sigma_i$ is removed from the factor graph with all associated connections. Let us denote $\widetilde{\bm B} = \bm B / \sqrt{p}$ and $\widetilde{u}_q=u_q/\sqrt{p}$. Then, the messages $\nu^{t}_{i \rightarrow j;k}(\sigma_i)$ from the variable nodes $i$ to $j$ along the edge $(i,j)$ in the graph $\bm G_k$, for $i,j \in [n]$, $k \in [m]$, are given by,
\begin{align*}
\nu^{t+1}_{i \rightarrow j;k}(\sigma_i) &\approx \prod_{q \in [p]}\mathbb E^t_{q \rightarrow i}\left[\exp\left(\sqrt{\frac{\mu p^2}{n}}\widetilde{B}_{qi}\widetilde{u}_q\sigma_i\right)\right]\prod_{\ell \neq k, \ell=1}^{m}\Bigg\{\prod_{c \in \partial_\ell i}\mathbb{E}^t_{c \rightarrow i;\ell}\left[\frac{d_\ell+\lambda_\ell\sqrt{d_\ell}\sigma_i\sigma_c}{n}\right]\nonumber\\
& \quad\quad\quad\prod_{c \in (\partial_\ell i)^c}\mathbb{E}^t_{c \rightarrow i;\ell}\left[1-\frac{d_\ell+\lambda_\ell\sqrt{d_\ell}\sigma_i\sigma_c}{n}\right]\Bigg\}\nonumber\\
& \quad\quad\Bigg\{\prod_{c \in \partial_k i\setminus\{j\}}\mathbb{E}^t_{c \rightarrow i;k}\left[\frac{d_k+\lambda_k\sqrt{d_k}\sigma_i\sigma_c}{n}\right]\prod_{c \in (\partial_k i)^c\setminus\{j\}}\mathbb{E}^t_{c \rightarrow i;k}\left[1-\frac{d_k+\lambda_k\sqrt{d_k}\sigma_i\sigma_c}{n}\right]\Bigg\}\nonumber.
\end{align*}
Here, the marginal distributions are provided modulo a constant of proportionality. A similar convention is followed throughout the rest of the proof. Further, $\mathbb{E}^t_{c \rightarrow i;\ell}$, and $\mathbb E^t_{q \rightarrow i}$ represent the expectations of the corresponding variables with respect to the measures $\nu^{t}_{i \rightarrow j;k}$ and $\nu^{t}_{i \rightarrow q}$. Similar notations are also adopted throughout the rest of the proof. Next, the messages from the variable node $i$ to factor node $q$ for $i \in [n]$ and $q \in [p]$ are given by,
\begin{align*}
\nu^{t+1}_{i \rightarrow q}(\sigma_i) &\approx \prod_{r \in [p]\setminus\{q\}}\mathbb E^t_{r \rightarrow i}\left[\exp\left(\sqrt{\frac{\mu p^2}{n}}\widetilde{B}_{ri}\widetilde{u}_r\sigma_i\right)\right]\prod_{\ell=1}^{m}\Bigg\{\prod_{k \in \partial_\ell i}\mathbb{E}^t_{k \rightarrow i;\ell}\left[\frac{d_\ell+\lambda_\ell\sqrt{d_\ell}\sigma_i\sigma_k}{n}\right]\nonumber\\
& \quad\quad\quad\prod_{k \in (\partial_\ell i)^c}\mathbb{E}^t_{k \rightarrow i;\ell}\left[1-\frac{d_\ell+\lambda_\ell\sqrt{d_\ell}\sigma_i\sigma_k}{n}\right]\Bigg\}.\nonumber
\end{align*}
Finally, the messages from factor nodes $q \in [p]$ to $i \in [n]$ are given by,
\begin{align*}
\nu^{t+1}_{q \rightarrow i}(\tilde{u}_q) &\approx \exp\left(-\frac{p(1+\mu)\widetilde{u}^2_q}{2}\right)\prod_{j \neq i}\mathbb{E}^t_{j \rightarrow q}\left[\exp\left(\sqrt{\frac{\mu p^2}{n}}\widetilde{B}_{qj}\widetilde{u}_q\sigma_j\right)\right].\nonumber
\end{align*}
Now, to characterize the distributions $\nu^{t+1}_{i \rightarrow j;k}$'s and $\nu^{t+1}_{i \rightarrow q}$'s we define the following parameters characterizing the log odds ratio,
\begin{equation}
\label{eq:f_new}
\eta^t_{i \rightarrow j;k}:=\frac{1}{2}\log\frac{\nu^{t+1}_{i \rightarrow j;k}(+1)}{\nu^{t+1}_{i \rightarrow j;k}(-1)},
\end{equation}
and
\begin{equation}
\label{eq:rho_new}
\eta^t_{i \rightarrow q}:=\frac{1}{2}\log\frac{\nu^{t+1}_{i \rightarrow q}(+1)}{\nu^{t+1}_{i \rightarrow q}(-1)}.
\end{equation}
For the messages $\nu^t_{q \rightarrow i}$, we use the \emph{Gaussian ansatz}, i.e., 
\[
\nu^t_{q \rightarrow i}:=\mathsf{N}\left(\frac{m^t_{q \rightarrow i}}{\sqrt{p}},\frac{\tau^t_{q \rightarrow i}}{p}\right).
\]
Let us observe that,
\begin{align*}
\mathbb E^t_{q \rightarrow i}\left[\exp\left(\sqrt{\frac{\mu p^2}{n}}\widetilde{B}_{qi}\tilde{u}_q\sigma_i\right)\right]&=\exp\left(\sqrt{\frac{\mu p}{n}}\widetilde{B}_{qi}\sigma_im^t_{q \rightarrow i}+\frac{\mu p}{2n}\widetilde{B}^2_{qi}\tau^t_{q\rightarrow i}\right),\nonumber\\
\mathbb{E}^t_{i \rightarrow j;\ell}\left[\frac{d_\ell+\lambda_\ell\sqrt{d_\ell}\sigma_i\sigma_j}{n}\right]&=\frac{d_\ell}{n}\left(1+\frac{\lambda_\ell \sigma_j}{\sqrt{d_\ell}}\tanh(\eta^t_{i \rightarrow j; \ell})\right),\nonumber\\
\mathbb E^t_{i \rightarrow q}\left[\exp\left(\sqrt{\frac{\mu p^2}{n}}\widetilde{B}_{qi}\tilde{u}_q\sigma_i\right)\right]&=\frac{\cosh(\eta^t_{i \rightarrow q}+\sqrt{\mu p^2/n}\widetilde{B}_{qi}\tilde{u}_q)}{\cosh(\eta^t_{i \rightarrow q})}.
\end{align*}
Using \eqref{eq:f_new} and \eqref{eq:rho_new}, we get,
\begin{align}
\label{eq:def_eta_i_j}
\eta^{t+1}_{i \rightarrow j;k}&=\sqrt{\frac{\mu}{\gamma p}}\sum\limits_{q \in [p]}B_{qi}m^t_{q \rightarrow i}+\sum\limits_{\ell=1;\ell \neq k}^m\Bigg[\sum_{c \in \partial_\ell i}f(\eta^t_{c \rightarrow i;\ell};\rho_\ell)-\sum_{c \in (\partial_\ell i)^c}f(\eta^t_{c \rightarrow i;\ell};\rho_{n,\ell})\Bigg]\nonumber\\
&\quad\quad+\sum_{c \in \partial_\ell i\setminus \{j\}}f(\eta^t_{c \rightarrow i;k};\rho_\ell)-\sum_{c \in (\partial_\ell i)^c\setminus \{j\}}f(\eta^t_{c \rightarrow i;k};\rho_{n,\ell}),\nonumber\\
\eta^{t+1}_{i \rightarrow q}&=\sqrt{\frac{\mu}{\gamma p}}\sum\limits_{r \in [p]\setminus\{q\}}B_{ri}m^t_{r \rightarrow i}+\sum\limits_{\ell=1}^m\Bigg[\sum_{c \in \partial_\ell i}f(\eta^t_{c \rightarrow i;\ell};\rho_\ell)-\sum_{c \in (\partial_\ell i)^c}f(\eta^t_{c \rightarrow i;\ell};\rho_{n,\ell})\Bigg]\nonumber\\
\end{align}
Also, define,
\begin{equation}
\label{eq:def_eta}
    \eta^{t+1}_i=\sqrt{\frac{\mu}{\gamma p}}\sum\limits_{q \in [p]}B_{qi}m^t_{q \rightarrow i}+\sum\limits_{\ell=1}^m\Bigg[\sum_{c \in \partial_\ell i}f(\eta^t_{c \rightarrow i;\ell};\rho_\ell)-\sum_{c \in (\partial_\ell i)^c}f(\eta^t_{c \rightarrow i;\ell};\rho_{n,\ell})\Bigg].
\end{equation}
By Taylor Expansion,
\begin{align*}
\log \nu^{t+1}_{q \rightarrow i}(u_q)&=\mbox{const}-\frac{p(1+\mu)}{2}u^2_q+\sum\limits_{j \in [n]\setminus\{i\}}\log \cosh\left(\eta^t_{j \rightarrow q}+p\sqrt{\frac{\mu}{n}}\widetilde{B}_{qj}u_q\right)\nonumber\\
&=\mbox{const}-\frac{p(1+\mu)}{2}u^2_q+\left(p\sqrt{\frac{\mu}{n}}\sum\limits_{j \in [n]\setminus \{i\}}\widetilde{B}_{qj}\tanh(\eta^t_{j \rightarrow q})\right)u_q\nonumber\\
&\quad\quad +\left(\frac{p^2\mu^2}{2n}\sum\limits_{j \in [n]\setminus\{i\}}\widetilde{B}^2_{qj}\sech^2(\eta^t_{j \rightarrow q})\right)u^2_q + O(n^{-1/2}).\nonumber
\end{align*}
This implies,
\begin{align*}
\tau^{t+1}_{q \rightarrow i}&=\left(1+\mu-\frac{\mu}{\gamma}\sum\limits_{j \in [n]\setminus\{i\}}\frac{B^2_{qj}}{p}\sech^2(\eta^t_{j \rightarrow q})\right)^{-1}\nonumber\\
m^{t+1}_{q \rightarrow i}&=\tau^{t+1}_{q \rightarrow i}\sqrt{\frac{\mu}{\gamma}}\sum\limits_{j \in [n]\setminus\{i\}}\frac{B_{qj}}{\sqrt{p}}\tanh(\eta^t_{j \rightarrow q}).\nonumber
\end{align*}
Next, we try to approximate the parameters $\eta^t_{i\rightarrow j;\ell}$ using techniques similar to \cite{ContBlockMod,Decelle_2011}. As $\sup_z f(z,\rho) \le \rho$, if $i$ and $j$ have no edge between them in $\bm G_\ell$, then,
\begin{align*}
\eta^t_{i \rightarrow j;\ell}&=\eta^t_{i}-f(\eta^{t-1}_{j \rightarrow i;\ell};\rho_{n,\ell})\\
&=\eta^t_{i}+o(\rho_{n,\ell})=\eta^t_{i;\ell}+O(n^{-1}).
\end{align*}
Using Taylor expansion,
\begin{align*}
f(\eta^t_{i \rightarrow j;\ell};\rho_{n,l})&=f(\eta^t_{i};\rho_{n,l})+O\left(\frac{\tanh(\rho_{n,\ell})}{n}\right)\\
&=f(\eta^t_{i};\rho_{n,l})+O(n^{-2}).
\end{align*}
Hence, from \eqref{eq:def_eta_i_j} and \eqref{eq:def_eta}, we have the following approximations
\begin{align}
\label{eq:equations_edge_message_1}
\eta_{i\rightarrow j,\ell}^{t+1} = 
    & \sqrt{\frac{\mu}{p\gamma}}\sum_{r = 1}^{p}B_{ri}m^t_{r \rightarrow i}+ \sum_{\substack{r=1\\ r\neq \ell}}^{m}\left\{\sum_{k\in \partial_{r}i}f(\eta^t_{k \rightarrow i; r};\rho_r)-\sum\limits_{k=1}^{n}f(\eta^t_k;\rho_{n,r})\right\}\\
    & + \sum_{k\in\partial_{\ell}i\setminus\{j\}}f(\eta^t_{k \rightarrow i; \ell};\rho_\ell) - \sum\limits_{k=1}^{n}f(\eta^t_k;\rho_{n,\ell})+O(n^{-1}),\\
    \label{eq:equations_edge_message_2}
    \eta^t_i&=\sqrt{\frac{\mu}{p\gamma}}\sum_{r = 1}^{p}B_{ri}m^t_{r \rightarrow i}+ \sum_{r=1}^{m}\left\{\sum_{k\in \partial_{r}i}f(\eta^t_{k \rightarrow i; r};\rho_r)-\sum\limits_{k=1}^{n}f(\eta^t_k;\rho_{n,r})\right\}
\end{align}
Furthermore, we also have
\begin{align*}
    \eta^{t+1}_{i \rightarrow q}&=\sqrt{\frac{\mu}{\gamma p}}\sum\limits_{r \in [p]\setminus\{q\}}B_{ri}m^t_{r \rightarrow i}+\sum\limits_{\ell=1}^m\Bigg[\sum_{c \in \partial_\ell i}f(\eta^t_{c \rightarrow i;\ell};\rho_\ell)-\sum_{k \in [n]}f(\eta^t_{k};\rho_{n,\ell})\Bigg]
\end{align*}
Now we use the following ansatz,
\begin{align}
\label{eq:ansatz}
\eta^t_{i \rightarrow q}&=\eta^t_i+\delta\eta^t_{i \rightarrow q},\\
m^t_{q \rightarrow i}&=m^t_q+\delta m^t_{q \rightarrow i},\\
\tau^t_{q \rightarrow i}&=\tau^t_q+\delta\tau^t_{q \rightarrow i},
\end{align}
where $\delta\eta^t_{i \rightarrow q},\delta m^t_{q \rightarrow i},\delta\tau^t_{q \rightarrow i}$ are each $O(n^{-1/2})$. Now, observing that,
\begin{align*}
\eta^{t+1}_{i \rightarrow q}&=\sqrt{\frac{\mu}{p\gamma}}\sum_{r = 1}^{p}B_{ri}(m_r^{t}+\delta m^t_{r \rightarrow i})+\sum\limits_{\ell=1}^{m}\Bigg\{\sum\limits_{k \in \partial_\ell i}f(\eta^t_{k \rightarrow i;\ell};\rho_\ell)-\sum_{k \in [n]}f(\eta^t_{k;\ell};\rho_{n,\ell})\Bigg\}\\
&=\sqrt{\frac{\mu}{p\gamma}}\sum_{r = 1}^{p}B_{ri}(m_r^{t}+\delta m^t_{r \rightarrow i})+\sum\limits_{\ell=1}^{m}\Bigg\{\sum\limits_{k \in \partial_\ell i}f(\eta^t_{k \rightarrow i;\ell};\rho_\ell)-\sum_{k \in [n]}f(\eta^t_{k;\ell};\rho_{n,\ell})\Bigg\}\\
&\hskip 3em -\sqrt{\frac{\mu}{\gamma p}}(B_{qi}m^t_q+B_{qi}m^t_{q \rightarrow i}\delta).
\end{align*}
Since, the term $B_{qi}m^t_{q \rightarrow i}\delta/\sqrt{p}=O(n^{-1})$, we can ignore it. Hence, we have the following approximation,
\begin{align}
\label{eq:eta_update}
\eta^{t+1}_i&=\sqrt{\frac{\mu}{p\gamma}}\sum_{r = 1}^{p}B_{ri}(m_r^{t}+\delta m^t_{r \rightarrow i})+\sum\limits_{\ell=1}^{m}\Bigg\{\sum\limits_{k \in \partial_\ell i}f(\eta^t_{k \rightarrow i;\ell};\rho_\ell)-\sum_{k \in [n]}f(\eta^t_{k;\ell};\rho_{n,\ell})\Bigg\}\nonumber\\
\delta\eta^{t+1}_{i \rightarrow q}&\approx-\sqrt{\frac{\mu}{\gamma p}}B_{qi}m^t_q\nonumber
\end{align}
Using \eqref{eq:ansatz} and following the techniques described in (128)-(130) of \cite{ContBlockMod}, we get the following approximation,
\begin{align}
\tau^{t+1}_q&=\left(1+\mu-\frac{\mu}{p\gamma}\sum\limits_{j \in [n]}B^2_{qj}\sech^2(\eta^{t}_j)\right)^{-1}\nonumber\\
\delta\tau^{t+1}_q&\approx 0.
\end{align}
Similarly, using approximations similar to (133)-(136) of \cite{ContBlockMod}, we get,
\begin{align}
\label{eq:m_t_update}
m^{t+1}_q&=\tau^{t+1}_q\sqrt{\frac{\mu}{\gamma}}\sum\limits_{j \in [n]}\frac{B_{qj}}{\sqrt{p}}\tanh(\eta^t_j)-\frac{\mu\tau^{t+1}_q}{\gamma}\left[\sum\limits_{j \in [n]}\frac{B^2_{qj}}{p}\sech^2(\eta^t_j)\right]m^{t-1}_q\nonumber\\
\delta m^{t+1}_q&\approx -\tau^{t+1}_q\sqrt{\frac{\mu}{p\gamma}}B_{qi}\tanh(\eta^{t}_i).
\end{align}
Plugging in \eqref{eq:m_t_update} and \eqref{eq:eta_update} in \eqref{eq:equations_edge_message_1} and \eqref{eq:equations_edge_message_2}, we get the Belief Propagation updates \eqref{eq:bp_updates}-\eqref{eq:bp_updates_end}.}
\bibliographystyle{abbrvnat}
\bibliography{reference}

\end{document}